\newcommand{\N}{\mathbb{N}}
\newcommand{\flip}{\mathcal{F}(S)}
\newcommand{\flipp}{\mathcal{F}(S')}
\newcommand{\emod}{\text{Mod}^\pm(S)}
\theoremstyle{plain}
\newtheorem{theorem}{Theorem}[section]
\newtheorem{lemma}[theorem]{Lemma}
\newtheorem{prop}[theorem]{Proposition}
\newtheorem{corollary}[theorem]{Corollary}
\theoremstyle{remark}
\newtheorem*{acknowledgements}{Acknowledgements}
\begin{document}

\title{Finite Rigid Sets in Flip Graphs}
\author{Emily Shinkle} 
\date{}

\maketitle 

\thispagestyle{fancy}
\fancyhf{}
\lfoot{\small 2020 \textit{Mathematics Subject Classification}. Primary: 57K20, 20F65. Secondary: 57M60, 05C60, 05C25.}
\renewcommand{\headrulewidth}{0pt}
\renewcommand{\footrulewidth}{1pt}

\begin{abstract} 
We show that for most pairs of surfaces, there exists a finite subgraph of the flip graph of the first surface so that any injective homomorphism of this finite subgraph into the flip graph of the second surface can be extended uniquely to an injective homomorphism between the two flip graphs. Combined with a result of Aramayona-Koberda-Parlier, this implies that any such injective homomorphism of this finite set is induced by an embedding of the surfaces.
\end{abstract}

\section{Introduction}\label{section: introduction}

Let $S$ and $S'$ be compact, connected, orientable surfaces, possibly with boundary, with a nonzero number of marked points and at least one marked point on each boundary component. A triangulation of $S$ is a maximal collection of isotopy classes of essential arcs on $S$ with pairwise disjoint representatives. The \emph{flip graph} $\flip$ of $S$, also known as the \emph{ideal triangulation graph}, is the graph whose vertices represent triangulations of $S$ and whose edges connect vertices representing triangulations which differ by one arc. 

Embeddings $S \rightarrow S'$ induce injective graph homomorphisms $\flip \rightarrow \flipp$. In \cite{aramayona2015injective}, Aramayona-Koberda-Parlier prove that except for a finite number of homeomorphism types of surfaces $S$, all injective homomorphisms $\flip \rightarrow \flipp$ arise in this way. This result is known as the \emph{rigidity} of $\flip$ and similar rigidity results have been proved for many other simplicial complexes associated to surfaces. Recently, there has been interest in proving \emph{finite rigidity} results for these complexes as well. In this paper, we prove a finite rigidity result for $\flip$. Here, we say that a finite subgraph $\mathcal{X} \subseteq \flip$ is a \emph{finite rigid set for the pair $(S,S')$} if any injective homomorphism $\lambda:\mathcal{X} \rightarrow \flipp$ can be extended uniquely to an injective homomorphism $\phi : \flip \rightarrow \flipp$, i.e. $\phi|_{\mathcal{X}} = \lambda$. 

We write $S_{g,n,(p_1,\ldots,p_b)}$ for a surface of genus $g$ with $n$ marked points in its interior and $b$ boundary components where the number of marked points on the $i^{th}$ boundary component is $p_i$. If $b  =  0$, we write $S_{g,n}$. 

\begin{theorem} \label{theorem: main result}
Let $S\not \cong $ $S_{0,0,(1,1)}, S_{1,1}$. Let $S'$ be any surface. Then there exists a finite rigid set for the pair $(S,S')$. 
\end{theorem}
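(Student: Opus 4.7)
The plan is to construct an explicit finite subgraph $\mathcal{X} \subseteq \flip$ and verify directly that any injective homomorphism $\lambda \colon \mathcal{X} \to \flipp$ extends uniquely to an injective graph homomorphism $\phi \colon \flip \to \flipp$. I will fix a carefully chosen base triangulation $T_0$ of $S$ and take $\mathcal{X}$ to contain the combinatorial ball of radius $2$ around $T_0$ in $\flip$, augmented by finitely many additional witness triangulations as needed. The first observation is that $\lambda(T_0)$ is some triangulation $T_0'$ of $S'$, and the edges of $\mathcal{X}$ incident to $T_0$ give an injection from flippable arcs of $T_0$ to flippable arcs of $T_0'$: each arc $a$ of $T_0$ corresponds bijectively to the edge from $T_0$ to $T_a$, which $\lambda$ must send to an edge emanating from $T_0'$ and hence to a distinguished arc of $T_0'$.

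To upgrade this to a labeled identification of arcs of $T_0$ with arcs of $T_0'$, I will exploit the local combinatorics of the radius-$2$ ball. Two arcs $a,b$ of $T_0$ fail to share a triangle precisely when the flips at $a$ and $b$ commute, and in that case a $4$-cycle $T_0 \to T_a \to T_{ab} \to T_b \to T_0$ sits inside the $2$-ball; when $a$ and $b$ share a triangle, no such $4$-cycle exists, and a pentagonal pattern appears instead. Since $\lambda$ preserves all of these local patterns, it preserves the triangle-incidence structure of the arcs of $T_0$. Combined with finitely many further witness triangulations inserted to break residual symmetries of the link of $T_0$, I obtain a canonical bijection $\lambda_\ast$ sending each arc of $T_0$ to a specific arc of $T_0'$.

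With the arc labeling in hand, I will extend $\lambda$ inductively to all of $\flip$: any triangulation $T$ is joined to $T_0$ by a finite flip sequence, and the image of each intermediate triangulation is forced by the arc-labeling, together with the requirement that edges go to edges. Well-definedness --- that different flip paths from $T_0$ to $T$ produce the same image --- and the graph-homomorphism property follow once I show that $\flip$ is generated as a $2$-complex by local commuting-square and pentagonal relations, each witnessed inside a radius-$2$ ball around some triangulation. Injectivity of $\phi$ is then inherited from injectivity of $\lambda_\ast$ together with the fact that a triangulation is determined by its arc set.

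The main obstacle I expect is the arc-labeling step: guaranteeing that the combinatorial data inside $\mathcal{X}$ rigidifies the identification against all automorphisms of the link of $T_0$. The excluded surfaces $S_{0,0,(1,1)}$ and $S_{1,1}$ illustrate exactly this failure, since their flip graphs (a bi-infinite line and the Farey graph, respectively) are locally too symmetric for any finite subset to force a unique extension. For the remaining surfaces, the triangle-incidence and pentagonal patterns in the $2$-ball should be rich enough, after possibly adjoining a small number of symmetry-breaking witness triangulations, to pin down the arc correspondence. Combined with the rigidity theorem of Aramayona-Koberda-Parlier, the resulting extension $\phi$ is then induced by an embedding $S \hookrightarrow S'$, recovering the statement advertised in the abstract.
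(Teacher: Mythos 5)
Your proposal takes a genuinely different route from the paper, and unfortunately it has several gaps that prevent it from closing.

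The paper never tries to construct the extension $\phi$ directly from combinatorial data inside $\mathcal{X}$. Instead it first proves a compactness/pigeonhole result (the paper's Proposition \ref{prop: existence of weakly rigid set}): for any $r$ there is an $R\geq r$ so that any injective homomorphism on $B_{\flip}(U;R)$ automatically agrees with \emph{some} global injective homomorphism $\phi:\flip\to\flipp$ on $B_{\flip}(U;r)$. The combinatorial work (Lemma \ref{lemma: classification of paths of length 2}, Corollary \ref{corollary: rigid 4-cycles and 5-cycles}) is then used only to propagate the agreement from $B_{\flip}(U;t)$ to $B_{\flip}(U;t+1)$, where the hard case of two arcs bordering two common triangles is handled by the extra complexity guaranteed by Lemma \ref{lemma: nonsimple surfaces are complex enough}. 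Your approach replaces the compactness step with a direct inductive extension along flip paths, which is closer in spirit to the Aramayona–Koberda–Parlier proof of rigidity of the full flip graph, but that proof crucially has access to the values of $\lambda$ on all of $\flip$, not just a finite piece.

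Concretely, there are two gaps. First, a radius-$2$ ball around $T_0$ plus ``finitely many witnesses'' is not enough to force the extension. Your propagation step flips an arc $c$ in a triangulation $T$ far from $T_0$ and declares the image to be the flip of $\lambda(T)$ along the inductively-labeled arc $\lambda_\ast(c)$. But at that stage you have no control: you do not know that $\lambda_\ast(c)$ is flippable in $\lambda(T)$ (it could be the inner arc of a folded triangle in the image without being so in the source), and you do not know that the resulting rule is independent of the flip path to $T$. The commuting-square and pentagon relations of the Ptolemy groupoid would give well-definedness only if you already knew the inductively-defined map sends relation paths to relation paths; this needs the $4$-cycle/$5$-cycle classification to hold near \emph{every} intermediate triangulation, not just near $T_0$. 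This is exactly the issue the paper's Proposition \ref{prop: existence of weakly rigid set} is designed to circumvent.

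Second, the $4$-cycle/$5$-cycle dichotomy only detects pairs of arcs sharing zero or one triangle. When two arcs $a,b$ of $T$ border two common triangles (the paper's Case 3), no small cycle through $T$, $T_a$, $T_b$ exists, so your ``triangle-incidence structure'' argument gives no information about how $\lambda$ must behave on $T_a$. The paper handles this in two different ways: when $d(S)=d(S')$ and $S$ is non-exceptional it invokes Theorem \ref{theorem: aramayona-koberda-parlier} to make $\phi$ a graph isomorphism and then counts degrees; in general it needs Lemma \ref{lemma: nonsimple surfaces are complex enough} to find auxiliary arcs $c$, $e$ that pin down the image via additional $4$- and $5$-cycles, and this forces the explicit separate treatment of the eleven simple surfaces via the pictures in Appendix \ref{appendix: images of flip graphs}. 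Your proposal does not address this case at all, and your excluded-surface heuristic (that $S_{0,0,(1,1)}$ and $S_{1,1}$ fail because their flip graphs are ``too symmetric'') misses that several other surfaces with rich flip graphs also require special handling in the paper.
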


The case $S\cong S_{0,0,(1,1)}$ is more complicated, but we can decide precisely when finite rigid sets exist.

\begin{theorem} \label{theorem: main result add-on} If $S\cong S_{0,0,(1,1)}$, then there is a finite rigid set for the pair $(S,S')$ if and only if $\flipp$ is finite or $S'\cong S_{0,0,(1,1)}$. 
\end{theorem}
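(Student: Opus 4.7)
Throughout, $S \cong S_{0,0,(1,1)}$. The first step is to recognize $\flip \cong \mathbb{Z}$: an Euler characteristic count gives exactly two arcs per triangulation of $S$, and each is flippable with a unique partner, so $\flip$ is a connected, infinite, $2$-regular graph, hence the bi-infinite path. The theorem then becomes a statement about when finite subgraphs of $\mathbb{Z}$ force unique extension to injective walks in $\flipp$.

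For the ``if'' direction, I would handle two cases. If $\flipp$ is finite with $k$ vertices, take $\mathcal{X} \subseteq \flip$ to be any subgraph on more than $k$ vertices; by pigeonhole there is no injective $\lambda : \mathcal{X} \to \flipp$, so the rigidity condition holds vacuously. If $S' \cong S_{0,0,(1,1)}$ then $\flipp \cong \mathbb{Z}$ as well, and I take $\mathcal{X}$ to be a $3$-vertex path in $\mathbb{Z}$; any injective $\lambda : \mathcal{X} \to \mathbb{Z}$ determines both a basepoint and an orientation sign $\epsilon \in \{\pm 1\}$ (the middle vertex goes to some $c$, and injectivity forces the two neighbors into $\{c-1,c+1\}$), and this uniquely extends to the injective homomorphism $\phi(n) = c + \epsilon n$, by an easy induction using that every vertex of $\mathbb{Z}$ has exactly two neighbors.

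The ``only if'' direction is the substantive part. Assume $\flipp$ is infinite and $S' \not\cong S_{0,0,(1,1)}$; I must show no finite $\mathcal{X} \subseteq \flip$ is rigid. Given any such $\mathcal{X}$, the plan is to exhibit an injective $\lambda : \mathcal{X} \to \flipp$ admitting two distinct injective extensions $\phi_1,\phi_2 : \mathbb{Z} \to \flipp$, by producing two bi-infinite injective walks in $\flipp$ that coincide on a window containing $\lambda(\mathcal{X})$ but diverge elsewhere. Because $S'$ has strictly more complexity than $S_{0,0,(1,1)}$, one expects $\flipp$ to contain ``branching'' --- a vertex of degree $\geq 3$, or an arc admitting multiple valid flip replacements --- at which a given walk can be split off into a second distinct walk. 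The main obstacle is confirming this branching property for every surface $S'$ permitted by the hypothesis, especially the low-complexity ones (for instance, those whose triangulations also have only two arcs); for these, one needs a direct verification that $\flipp$ is never globally isomorphic to $\mathbb{Z}$, together with a mapping class group argument to translate branching vertices to any prescribed location relative to $\lambda(\mathcal{X})$, so that branching remains accessible no matter how $\mathcal{X}$ is placed.
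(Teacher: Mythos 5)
Your ``if'' direction matches the paper's argument: the pigeonhole argument for finite $\flipp$, and the observation that a short segment of $\mathbb{Z}$ pins down a basepoint and orientation, uniquely determining an injective map $\mathbb{Z}\to\mathbb{Z}$. This is correct and essentially identical to what the paper does (though the paper also handles disconnected $\mathcal{X}$ separately, by placing components so far apart that no extension exists, which is an alternative way to break rigidity).

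The ``only if'' direction is where the genuine gap lies, and you identify it yourself: you write that ``one expects $\flipp$ to contain branching'' and that the ``main obstacle is confirming this branching property for every surface $S'$.'' That obstacle is precisely the substantive content of the proof, and the proposal does not resolve it. Two comments on the shape of what is needed. First, a single vertex of degree $\geq 3$ is not by itself enough; one must produce two bi-infinite injective walks agreeing on an arbitrarily long window and then diverging, and the paper does this via a concrete ``bi-infinite ladder'' inside $\flipp$: for non-simple $S'$ it exhibits a triangulation $T$ with a pair of arcs $a,b$ bounding a once-punctured bigon (the second configuration in Figure 10), then uses Lemma \ref{lemma: nonsimple surfaces are complex enough} to find a fifth flippable arc $e$, so that the two lines of triangulations containing $T\setminus\{a,b\}$ and $T_e\setminus\{a,b\}$ form the two rails of a ladder; one extension stays on one rail, the other crosses over to the second rail outside the window. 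For the remaining low-complexity $S'$ (the simple surfaces with infinite flip graph, other than $S_{0,0,(1,1)}$), the paper falls back on explicit drawings of $\flipp$ in Appendix \ref{appendix: images of flip graphs} plus the trivalent tree for $S_{1,1}$, and checks branching by inspection. Second, your suggestion that a mapping class group argument is needed to ``translate branching vertices to any prescribed location'' is unnecessary: since you are free to choose $\lambda$, you simply place $\lambda(\mathcal{X})$ next to the branching locus rather than moving the branching locus to $\lambda(\mathcal{X})$. So the strategy you outline is the right one, but the case analysis that makes it a proof --- specifically, the ladder construction together with Lemma \ref{lemma: nonsimple surfaces are complex enough} for non-simple $S'$, and explicit inspection for the finitely many simple $S'$ --- is missing.
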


The final case $S\cong S_{1,1}$ presents some additional difficulties, which we discuss in Section \ref{section: remaining case}. At present, we do not know in general whether finite rigid sets exist in this case.

The next corollary follows from combining Theorem \ref{theorem: main result} with the result of Aramayona-Koberda-Parlier in \cite{aramayona2015injective}. They call $S$ \emph{exceptional} if it is a (not necessarily proper) essential subsurface of $S_{0,n}$ with $n\leq 4$ or $S_{1,n}$ with $n\leq 2$, and \emph{non-exceptional} otherwise. We will do the same. Define $d(S) = 6g+3b+3n+p_1+p_2+\ldots +p_b-6$. 

\begin{corollary}\label{corollary: applying a-k-p theorem}
Suppose $S$ is non-exceptional and $S'$ is any surface. Then there exists a finite subgraph $\mathcal{X}\subseteq \flip$ such that any injective homomorphism $\lambda:\mathcal{X} \rightarrow \flipp$ is induced by an embedding $h:S \rightarrow S'$, unique up to isotopy. If $d(S) = d(S')$, then $h$ is a homeomorphism. 
\end{corollary}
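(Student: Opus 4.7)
The plan is to chain Theorem \ref{theorem: main result} with the rigidity theorem of Aramayona--Koberda--Parlier. Since $S$ is non-exceptional, it is in particular not $S_{0,0,(1,1)}$ or $S_{1,1}$, so Theorem \ref{theorem: main result} produces a finite rigid set $\mathcal{X} \subseteq \flip$ for the pair $(S,S')$. By the definition of finite rigid set, any injective homomorphism $\lambda : \mathcal{X} \rightarrow \flipp$ extends uniquely to an injective homomorphism $\phi : \flip \rightarrow \flipp$.

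Because $S$ is non-exceptional, the theorem of \cite{aramayona2015injective} applies to the extension $\phi$ and produces an embedding $h : S \rightarrow S'$, unique up to isotopy, such that the induced map on flip graphs is $\phi$. Restricting to $\mathcal{X}$, the embedding $h$ induces $\lambda$. Uniqueness of $h$ up to isotopy follows from uniqueness of $\phi$ together with the uniqueness clause in the Aramayona--Koberda--Parlier theorem. This settles the first assertion.

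For the final claim, recall that $d(S)$ is precisely the number of arcs in any triangulation of $S$, by a standard Euler-characteristic computation (one can verify the formula on $S_{g,n}$, on polygons $S_{0,0,(n)}$, and track changes when adding interior marked points or boundary components). Fix a triangulation $T$ of $S$; then $h(T)$ is a collection of $d(S)$ pairwise disjoint essential arcs of $S'$. If $d(S) = d(S')$, this collection already has the maximum size of any such collection on $S'$, so $h(T)$ is itself a triangulation of $S'$. This forces the essential subsurface $h(S) \subseteq S'$ to fill $S'$, i.e., $h$ is a homeomorphism.

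The main (and mild) obstacle is the last step: rigorously ruling out $h(S) \subsetneq S'$. The idea is that if $h(S)$ were a proper essential subsurface, then either some marked point of $S'$ would lie outside $h(S)$ or $\partial h(S)$ would contain a non-boundary-parallel component in $S'$; in either case one could exhibit an essential arc of $S'$ disjoint from every arc of $h(T)$, contradicting the maximality established above. I expect this verification to be routine but to require a short case analysis on the complementary regions of $h(S)$ in $S'$.
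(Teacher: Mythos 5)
There is a genuine gap, and it is exactly the piece the paper has to work for. Your proof asserts that ``uniqueness of $h$ up to isotopy follows from uniqueness of $\phi$ together with the uniqueness clause in the Aramayona--Koberda--Parlier theorem.'' But Theorem \ref{theorem: aramayona-koberda-parlier} as stated (and as stated in \cite{aramayona2015injective}) contains no uniqueness clause for the inducing embedding: it says every injective homomorphism $\phi:\flip\rightarrow\flipp$ is induced by \emph{some} embedding $h$, and that $h$ is a homeomorphism when $d(S)=d(S')$, but it does not assert that $h$ is unique up to isotopy. Uniqueness of the extension $\phi$ (which is what being a finite rigid set gives you) does not a priori prevent two non-isotopic embeddings $h, h':S\rightarrow S'$ from inducing the same $\phi$: the induced map depends on $h$ only through its action on triangulations together with the fixed complementary data $Q$, so one must rule out distinct embeddings giving rise to the same assignment. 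The paper's Section \ref{section: proof of corollaries} explicitly flags this (``it remains to show the following'') and proves it as Lemma \ref{lemma: akp uniqueness}, via a case analysis showing that two embeddings $h, h'$ inducing the same $\phi$ must agree, up to isotopy, on each arc of a triangulation and then on each triangle (handling non-folded triangles with $3$, $2$, or $1$ flippable sides, and folded triangles, using that $S$ is non-exceptional to exclude degenerate cases). That lemma is the actual substance of the corollary's proof and your argument skips it.

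Meanwhile, the part you labor over---showing $h$ is a homeomorphism when $d(S)=d(S')$---is already supplied verbatim by Theorem \ref{theorem: aramayona-koberda-parlier}, so no new argument is needed there. Your sketch for it is also not quite complete as written (``$h(T)$ has maximal size, hence $h(S)$ fills, hence $h$ is a homeomorphism'' needs the embedding definition's requirement that $h(\mathcal{P}_S)=\mathcal{P}_{S'}\cap h(S)$ and some care with boundary components), but since it can simply be quoted from the theorem, that portion is harmless. The missing uniqueness argument, however, is not.
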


Finally, we note that our construction proves the following.

\begin{corollary}\label{corollary: exhaustion}
If there exists a finite rigid set for the pair $(S,S')$, then there exists an exhaustion of $\flip$ by finite rigid sets for the pair $(S,S')$, i.e. a sequence $(\mathcal{X}_i)_{i\in \N}$ of finite rigid sets for $(S,S')$ such that $\mathcal{X}_0\subseteq \mathcal{X}_1 \subseteq ... \subseteq \flip$ and $\bigcup_{i\in \N} \mathcal{X}_i  =  \flip$. 
\end{corollary}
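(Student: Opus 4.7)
The plan is to iterate the construction used to prove Theorems \ref{theorem: main result} and \ref{theorem: main result add-on} on progressively larger input data, obtaining a nested sequence of finite rigid sets that exhausts $\flip$. Since $\flip$ has only countably many vertices (triangulations), I would enumerate them as $T_0, T_1, T_2, \ldots$ and take $\mathcal{X}_0$ to be the finite rigid set provided by the hypothesis. Inductively, for each $i \geq 1$, I would produce a finite rigid set $\mathcal{X}_i \supseteq \mathcal{X}_{i-1} \cup \{T_{i-1}\}$ by re-running the construction with enlarged initial data, so that the output contains $T_{i-1}$. The resulting chain is increasing, each $\mathcal{X}_i$ is a finite rigid set for $(S,S')$, and $\bigcup_i \mathcal{X}_i = \flip$ since every vertex appears in some $\mathcal{X}_i$.

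The key step is to verify that the construction is flexible enough in the sense that, given any finite subgraph $\mathcal{Y} \subseteq \flip$, the construction can be adjusted to output a finite rigid set containing $\mathcal{Y}$. This requires re-examining the construction and identifying a parameter (such as a finite rigid set of arcs in the arc complex of $S$, which admits an exhaustion by finite rigid sets à la Aramayona--Leininger) so that enlarging the parameter enlarges the output in a monotone fashion. Provided such monotonicity holds, one can arrange the input so that each $T_{i-1}$ ends up in the associated rigid set of triangulations.

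The main obstacle is establishing this monotonicity: after enlarging the initial data to try to capture $T_{i-1}$, one must check that the constructed rigid set genuinely contains $\mathcal{X}_{i-1} \cup \{T_{i-1}\}$ and not merely a rigid set of some other shape. In the case $S \cong S_{0,0,(1,1)}$ covered by Theorem \ref{theorem: main result add-on}, the hypothesis forces either $\flipp$ to be finite, in which case the exhaustion terminates at $\flip$ itself after finitely many steps, or $S' \cong S_{0,0,(1,1)}$, in which case an explicit analysis of the construction in that theorem should yield the exhaustion directly.
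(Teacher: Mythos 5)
Your high-level plan---iterate the construction to produce a nested exhausting family---is the same as the paper's, but your writeup leaves the crucial step as an explicit gap ("provided such monotonicity holds \ldots") and slightly misidentifies the knob to turn. You propose varying a finite rigid set of arcs in the arc complex of $S$, but the construction in this paper does not route through the arc complex at all; the parameter to enlarge is the radius $r$ fed into Proposition~\ref{prop: existence of weakly rigid set}.

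The fact you need is in plain sight in the proofs of Theorems~\ref{theorem: main result} and~\ref{theorem: main result add-on}: for any $r\geq 1$, the rigid set $\mathcal{X}$ produced is defined to contain the ball $B_{\flip}(U;R)$ for some $R\geq r$ (and, since $\mathcal{X}$ is finite, $\mathcal{X}\subseteq B_{\flip}(U;R')$ for some $R'$). With this in hand the nested exhaustion is immediate and does not require any separate ``monotonicity'' argument: set $\mathcal{X}_0$ to be the rigid set for $r=1$, and inductively, having built $\mathcal{X}_{i-1}\subseteq B_{\flip}(U;R'_{i-1})$, run the construction with $r = R'_{i-1}+1$ to get a rigid set $\mathcal{X}_i\supseteq B_{\flip}(U;r)\supseteq \mathcal{X}_{i-1}$. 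Since $\flip$ is connected, $\bigcup_i B_{\flip}(U;i) = \flip$, so the $\mathcal{X}_i$ exhaust $\flip$. (When $\flip$ is finite, take $\mathcal{X}_i = \flip$ for all $i$.) Note also that your enumeration of vertices $T_0, T_1,\ldots$ is unnecessary---you cannot in general arrange $\mathcal{X}_i\supseteq\mathcal{X}_{i-1}\cup\{T_{i-1}\}$ directly, but you do not need to: containing a large enough ball centered at the fixed basepoint $U$ already suffices. Your proposal would be correct once you replace the vague ``monotonicity'' appeal with this concrete observation about balls.
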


The flip graph is primarily of interest because of its connections to mapping class groups. The \emph{extended mapping class group} of $S$, $\emod$, is the group of homeomorphisms of $S$ up to isotopy. Properties of $\emod$ can be better understood by studying its actions on certain simplicial complexes (see e.g. \cite{harer1985stability}, \cite{hamenstadt2005word}).  Many of these simplicial complexes have been shown to be \emph{rigid}, meaning that all of their automorphisms are induced by homeomorphisms. If such a homeomorphism is always unique, this implies that the automorphism group of the complex is isomorphic to $\emod$. 

Ivanov proved the first such rigidity result for the curve complex of many surfaces in \cite{ivanov1997automorphisms}. His result for the curve complex was extended by Korkmaz \cite{korkmaz1999automorphisms} and Luo \cite{luo1999automorphisms} to cover more surfaces, and strengthened by Shackleton \cite{shackleton2007combinatorial} and Hernández-Hernández \cite{hernandez2018edge} to apply for maps between differing curve complexes. Rigidity results also exist for the arc complex (Irmak-McCarthy \cite{irmak2010injective}, Disarlo \cite{disarlo2015combinatorial}), the pants complex (Margalit \cite{margalit2004automorphisms}, Aramayona \cite{aramayona2010simplicial}), the arc and curve complex (Korkmaz--Papadopoulos \cite{korkmaz2010arc}), the Schumtz graph of nonseparating curves (Schaller \cite{schaller2000mapping}), the complex of nonseparating curves (Irmak \cite{irmak2004complexes}), the Hatcher-Thurston complex (Irmak--Korkmaz \cite{irmak2007automorphisms}), the polygonalisation complex (Bell--Disarlo--Tang \cite{bell2019cubical}), and more. Some of these results have been extended to the case of non-orientable surfaces, such as the curve complex (Atalan--Korkmaz \cite{atalan2014automorphisms}, Irmak \cite{irmak2014simplicial}, Irmak \cite{irmak2012superinjective}), the arc complex (Irmak \cite{irmak2008injective}), and the two-sided curve complex (Irmak--Paris \cite{irmak2017superinjective}). 

Ivanov conjectured that every object naturally associated to a surface with sufficiently rich structure has an automorphism group isomorphic to the extended mapping class group \cite{ivanov2006fifteen}. Some work towards proving this has been done by Brendle--Margalit \cite{brendle2019normal} and McLeay \cite{mcleay2018geometric}, who prove general rigidity results about subcomplexes of the complex of domains, which was first introduced by McCarthy--Papadopoulos in \cite{mccarthy2012simplicial}.

In some cases, such results can be retained for maps defined only on certain finite subcomplexes, called \textit{finite rigid sets}. This type of result was originally suggested by Lars Louder. Such has been shown for the curve complex (Aramayona--Leininger \cite{aramayona2013finite}, Ilbira--Korkmaz \cite{ilbira2018finite}, Irmak \cite{irmak2019exhausting} and \cite{irmak2019exhausting2}), the arc complex (the author \cite{shinkle2020finite}), and the pants graph (Maungchang \cite{maungchang2018finite}, Hernández-Hernández--Leininger--Maungchang \cite{maungchang2019finite}, Maungchang \cite{maungchang2017exhausting}). We prove such a result for the flip graph in this paper.

\begin{acknowledgements}
The author would like to thank Christopher Leininger for suggesting this project and for providing guidance and support throughout it. She would also like to thank the referee for their careful reading and helpful suggestions.
\end{acknowledgements}

\section{Basics}\label{section: basics} 

For the remainder of the paper, $S$ and $S'$ will refer to compact, connected, orientable surfaces, possibly with boundary, with a nonzero number of marked points and at least one marked point on each boundary component. Let $\mathcal{P}_S$ denote the set of all marked points of $S$, including those in the interior of $S$ and those in $\partial S$. We require that a homeomorphism $f : S \rightarrow S'$ induces a bijection $\mathcal{P}_S \rightarrow \mathcal{P}_{S'}$. We will consider surfaces up to homeomorphism. 

An \textit{arc} on $S$ is a map $a:[0,1]\rightarrow S$ such that $a(0)$ and $a(1)$ (called the \textit{endpoints} of $a$) are in $\mathcal{P}_S$, and $a|_{(0,1)}$ (called the \textit{interior} of $a$) is injective and disjoint from $\mathcal{P}_S$. We identify arcs with their image on $S$. We consider arcs up to isotopy where each path in the one-parameter family of the isotopy is also an arc. In particular, isotopies of arcs are relative to the marked points and interiors do not pass though marked points. We also assume that arcs are essential, meaning they cannot be isotoped in an arbitrarily small neighborhood of a marked point or into $\partial S$. See Figure \ref{figure: examples of essential and nonessential arcs}. 

\begin{figure}[h]
    \labellist
        \pinlabel {Nonessential arcs} at 56 9
        \pinlabel {Essential arcs} at 170 9
    \endlabellist
    \centering
    \includegraphics{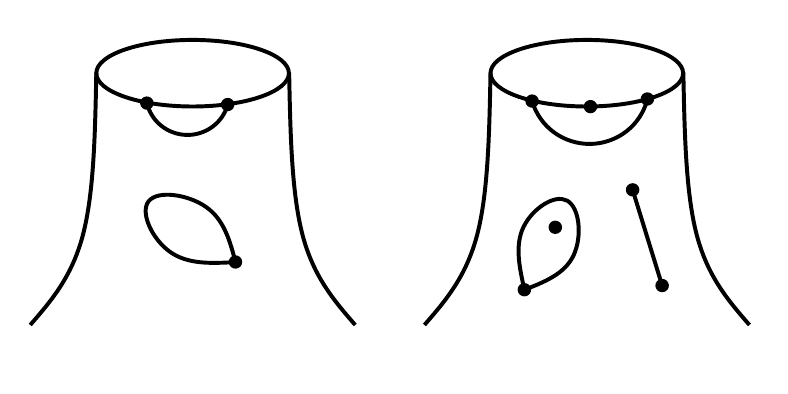}
    \caption{Examples of essential and nonessential arcs}
    \label{figure: examples of essential and nonessential arcs}
\end{figure}

We say that a continuous map $h : S \rightarrow S'$ is an \emph{embedding} if there exists a (possibly empty) collection of arcs with pairwise disjoint interiors $A\subseteq S'$ such that $h$ restricts to a homeomorphism from $\text{Int}(S)$ to a component of $S'\backslash A$, and $h(\mathcal{P}_S)  =  \mathcal{P}_{S'}\cap h(S)$. Note that $S$ is determined, up to homeomorphism, by the component of $S'\backslash A$ containing $h(S)$. If there exists an embedding from $S$ to $S'$, we say that $S$ is an \emph{essential subsurface} of $S'$. 

The \emph{(geometric) intersection number}, $i(a,b)$, of two arcs $a$ and $b$ is the minimum number of intersections of the interiors of representatives of $a$ and $b$. We say $a$ and $b$ are \textit{disjoint} if $i(a,b) = 0$.  

A \textit{triangulation} $T$ of $S$ is a maximal collection of distinct, pairwise-disjoint arcs. If $S \cong S_{0,1}, S_{0,0,(1)}, S_{0,0,(2)}, S_{0,0,(3)}$, then $S$ admits no arcs, so the only triangulation is the empty set. If $S\cong S_{0,2}$, there is a single triangulation, consisting of a single arc. Otherwise, a standard argument using the Euler characteristic shows that every triangulation of $S\cong S_{g,n,(p_1, \ldots, p_b)}$ contains $d(S) = 6g+3b+3n+p_1+p_2+\cdots + p_b -6$ arcs. Further, we can choose disjoint representatives of the arcs in a triangulation $T$ and then $T$ divides $S$ into \textit{triangles}. Formally, these triangles are embeddings $\Delta:S_{0,0,(3)} \rightarrow S$. We identify these triangles with their images on $S$. The boundary of $S_{0,0,(3)}$ is composed of three segments, each starting and ending at one of the marked points. We call the images of these segments under $\Delta$ the \emph{sides} of $\Delta$ and say that these segments \emph{border} $\Delta$. These sides are either arcs on $S$ or segments of $\partial S$. If the three sides of $\Delta$ are distinct, we call $\Delta$ \textit{non-folded}. Otherwise, we call $\Delta$ \textit{folded}. If $\Delta$ is folded, we call the two sides which are identified the \emph{inner side} of $\Delta$ and we call the other side the \emph{outer side}. See Figure \ref{figure: examples of triangles in a triangulation} for examples. 

\begin{figure}[h]
    \labellist
        \pinlabel {$\Delta_1$} at 64.5 74
        \pinlabel {$\Delta_2$} at 47 65
        \pinlabel {$\Delta_3$} at 62 48
        \pinlabel {$a$} at 59 36.5
        \pinlabel {$b$} at 43 45
    \endlabellist
    \centering
    \includegraphics{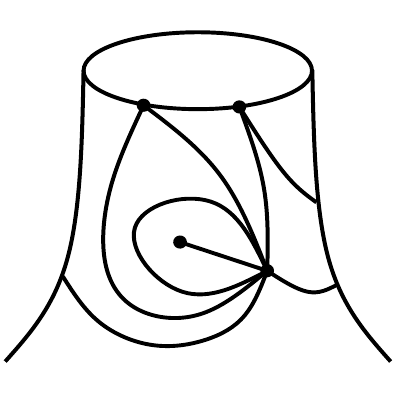}
    \caption{Examples of triangles. The triangles $\Delta_1$ and $\Delta_2$ are non-folded, whereas the triangle $\Delta_3$ is folded. The arc $a$ is the inner side of the folded triangle $\Delta_3$ and $b$ is the outer side.}
    \label{figure: examples of triangles in a triangulation}
\end{figure}

We say that two triangulations $T$ and $T'$ of a surface $S$ \textit{differ by a flip} if $T$ and $T'$ share exactly $d(S)-1$ arcs. Note that this implies that the two arcs on which they differ have intersection number one. See Figure \ref{figure: illustration of a flip}. We say $T'$ \emph{is a flip of $T$ along $a$} if $T$ and $T'$ differ by a flip and if $a$ is the arc in $T$ which is not in $T'$. If $a$ is the inner side of a folded triangle in $T$, then no triangulations differ from $T$ by a flip along $a$, and we say $a$ is \textit{not flippable}. If $a$ is any other arc, there is exactly one triangulation which is a flip of $T$ along $a$. We will sometimes denote this triangulation by $T_a$ and we say that $a$ is \textit{flippable}.

\begin{figure}[h]
    \labellist
        \pinlabel {$a$} at 55 68
        \pinlabel {$a'$} at 33 42
    \endlabellist
    \centering
    \includegraphics{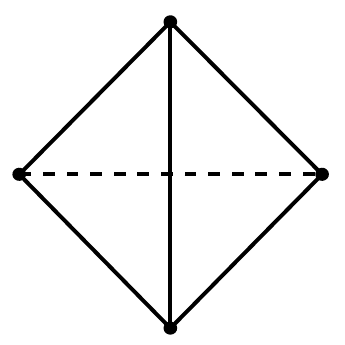}
    \caption{If a triangulation containing the solid arcs is flipped along the arc $a$, the resulting triangulation will contain $a'$ instead of $a$.}
    \label{figure: illustration of a flip}
\end{figure}

The \textit{flip graph} $\flip$ of $S$ is a graph whose vertices represent triangulations of $S$ and whose edges connect vertices representing triangulations which differ by a flip. This graph is sometimes called the \textit{ideal triangulation graph} by other authors. We use the term \emph{homomorphism} to refer to a simplicial map between two graphs, such as flip graphs. 
 
Aramayona-Koberda-Parlier explain in \cite{aramayona2015injective} how an embedding $h:S \rightarrow S'$ induces an injective homomorphism $\phi : \flip \rightarrow \flipp$: choose $Q$ to be a triangulation of $S'\backslash \text{int}(h(S))$ plus a collection of arcs on $\partial h(S)$ whose union is homeomorphic to $\partial h(S)$. Then define $\phi(T) = h(T)\cup Q$ for each vertex $T\in \flip$. Aramayona-Koberda-Parlier prove for all but finitely many homeomorphism classes of surfaces $S$, that all injective homomorphisms $\flip \rightarrow \flipp$ arise in this way. Recall that we say $S$ is exceptional if it is a (not necessarily proper) essential subsurface of $S_{0,n}$ with $n\leq 4$ or $S_{1,n}$ with $n\leq 2$, and non-exceptional otherwise. 

\begin{theorem}[\cite{aramayona2015injective}, Theorems 1.1, 1.4] \label{theorem: aramayona-koberda-parlier} Suppose $S$ is non-exceptional. Then every injective homomorphism $\phi :  \flip \rightarrow \flipp$ is induced by an embedding $h:S \rightarrow S'$. Further, if $d(S) = d(S')$, $h$ is a homeomorphism. 
\end{theorem}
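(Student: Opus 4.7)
The plan is to reduce the theorem to a rigidity statement for the arc complex of $S$ by constructing from $\phi$ an injective simplicial map on arcs. For any flippable arc $a$ contained in a triangulation $T$ of $S$, the triangulations $T$ and $T_a$ are adjacent in $\flip$, hence $\phi(T)$ and $\phi(T_a)$ are adjacent in $\flipp$ and share exactly $d(S')-1$ arcs. I would define $\phi_*(a)$ to be the unique arc in $\phi(T)\setminus \phi(T_a)$, and hope to show that this assignment is independent of $T$ and extends to an injective simplicial map of arc complexes.

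The first key step is well-definedness of $\phi_*(a)$. Given two triangulations $T$ and $T'$ both containing $a$ as a flippable arc, I would connect them by a sequence of flips none of which touches $a$, which amounts to a path in the subgraph of $\flip$ consisting of triangulations containing $a$; this subgraph is essentially the flip graph of $S$ cut along $a$, and a standard Euler-characteristic and connectivity argument (valid because $S$ is non-exceptional) shows it is connected. Applying $\phi$ to such a path and tracking which arc is removed at each flip forces the distinguished arc $\phi_*(a)$ to persist. To cover inner sides of folded triangles (which admit no flip), I would define their image separately by exploiting that such an arc is uniquely determined by the nearby flippable arcs in any triangulation containing it. Once $\phi_*$ is defined on all arcs, disjointness is preserved: if $a$ and $b$ are disjoint I can place them in a common triangulation $T$ and consider $T_a$ and $T_b$; the analysis above shows that $\phi_*(a)$ and $\phi_*(b)$ both lie in $\phi(T)$ and are therefore disjoint, and injectivity of $\phi_*$ follows from injectivity of $\phi$ by a similar argument.

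With $\phi_*$ established as an injective simplicial map of arc complexes, I would invoke the rigidity theorems for arc complexes of Irmak–McCarthy or Disarlo to obtain an embedding $h:S\to S'$ that induces $\phi_*$ on arcs. Since both $\phi$ and the flip-graph homomorphism induced by $h$ agree on every triangulation (each sends $T$ to $h(T)$ together with a fixed completion in the complement, determined by any single triangulation), the two homomorphisms must coincide on all of $\flip$. For the final clause, $d(S)=d(S')$ means triangulations of $S$ and $S'$ have equally many arcs, so the complement of $h(S)$ in $S'$ contributes no arcs and $h$ must be a homeomorphism. The hardest part will be the delicate handling of folded triangles and non-flippable arcs, and verifying connectivity of the $a$-fixed subgraph of $\flip$ uniformly across non-exceptional surfaces; the exceptional cases presumably fail precisely because one of these ingredients breaks down.
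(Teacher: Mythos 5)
This theorem is not proved in the paper you are reading: it is imported verbatim from Aramayona--Koberda--Parlier (Theorems 1.1 and 1.4 of \cite{aramayona2015injective}), so there is no internal argument to compare against. That said, your sketch does track the actual AKP strategy quite closely: they too extract from an injective homomorphism $\phi:\flip\rightarrow\flipp$ an induced map on arcs, verify that it is injective and simplicial, and then invoke the arc-complex rigidity theorems of Irmak--McCarthy and Disarlo to produce the embedding. So the overall architecture you propose is the right one, and it is not a ``different route.''

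Where your sketch is thin is precisely where the real work lives. Well-definedness of $\phi_*(a)$ is not merely a connectivity statement about the subgraph of triangulations containing $a$: you must also show that as you traverse an edge $T\to T_b$ of that subgraph, the distinguished arc $\phi(T)\setminus\phi(T_a)$ literally coincides with $\phi(T_b)\setminus\phi((T_b)_a)$. This requires knowing that $\phi$ sends the 4-cycle (when $a,b$ share no triangle) or 5-cycle (when they share one) through $T,T_a,T_b$ to a 4- or 5-cycle in $\flipp$ with the same combinatorial labeling of which arc is flipped along each edge; that is exactly the content of the pentagon/square classification of short cycles in flip graphs (Lemma \ref{lemma: classification of paths of length 2} in this paper plays this role in Shinkle's own arguments, and an analogous lemma appears in AKP). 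Without it, ``tracking the distinguished arc'' has no mechanism behind it. Similarly, injectivity of $\phi_*$ on \emph{intersecting} arcs does not follow from the disjoint case the way you suggest, and the non-flippable (folded-triangle) arcs genuinely require a separate construction. If you want to turn this into a proof, I would suggest first proving the local square/pentagon rigidity statement as a standalone lemma, then deducing well-definedness and injectivity of $\phi_*$ from it, rather than appealing to connectivity of the link alone.
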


Korkmaz and Papadopoulus also showed this in the case when $S\cong S'$ and $\partial S = \emptyset$ in \cite{korkmaz2012ideal}, using a different method.

\section{Preliminary Results}

We begin by proving a weakened version of our theorem. It uses only three properties of flip graphs: Flip graphs are connected (eg. \cite{mosher1988tiling}). Each vertex in $\flip$ has degree at most $d(S)$, so $\flip$ is locally finite. Finally, there are finitely many $\text{Aut}(\flip)$-orbits of vertices in $\flip$ since, up to homeomorphism, there are only finitely many ways to construct $S$ from a finite collection of triangles. We state the result for any graphs with these properties, but will apply it for $X = \flip$ and $Y = \flipp$.

We can think of any connected graph $Z$ as a metric space on the set of vertices where the distance $d_Z(z,z')$ between vertices $z$ and $z'$ is the minimum number of edges on a path between $z$ and $z'$. Let $B_Z(z;n)$ denote the closed ball of radius $n$ in $Z$ centered at $z$.

\begin{prop}\label{prop: existence of weakly rigid set}
Let $X$ and $Y$ be connected graphs such that $Y$ is locally finite and there are finitely many $\text{Aut}(Y)$-orbits of vertices in $Y$. Let $x\in X$ and $r\geq 0$. Then there exists $R\geq r$ such that for any injective homomorphism
\[\lambda:B_{X}(x;R)\rightarrow Y,\]
there exists an injective homomorphism $\phi : X\rightarrow Y$ such that $\lambda|_{B_{X}(x;r)} = \phi|_{B_{X}(x;r)}$. 
\end{prop}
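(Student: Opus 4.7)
The plan is to argue by contradiction, combining a pigeonhole on $\text{Aut}(Y)$-orbits with a Cantor-style diagonal extraction. Suppose no such $R$ exists. Then for each $n \geq r$ there is an injective homomorphism $\lambda_n : B_X(x;n) \to Y$ such that no injective homomorphism $\phi : X \to Y$ satisfies $\phi|_{B_X(x;r)} = \lambda_n|_{B_X(x;r)}$. The goal is to extract a limit $\phi : X \to Y$ from $(\lambda_n)$, show that it is an injective homomorphism, and verify that its restriction to $B_X(x;r)$ coincides with $\lambda_n|_{B_X(x;r)}$ for some $n$ (after a harmless modification of $\lambda_n$), contradicting the non-extension hypothesis.

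The first step is to normalize the basepoint image. Since there are only finitely many $\text{Aut}(Y)$-orbits of vertices, I would choose orbit representatives $y_1,\ldots,y_k$ and, for each $n$, an automorphism $\psi_n$ with $\psi_n \circ \lambda_n(x)$ equal to one of the $y_i$. By pigeonhole, after passing to a subsequence I may assume $\psi_n \circ \lambda_n(x) = y_0$ for a fixed vertex $y_0$ and all $n$. Replacing $\lambda_n$ by $\psi_n \circ \lambda_n$ preserves both injectivity and the non-extension hypothesis, because given any injective extension $\phi$ of $\psi_n \circ \lambda_n|_{B_X(x;r)}$, the map $\psi_n^{-1} \circ \phi$ would be a forbidden extension of the original $\lambda_n|_{B_X(x;r)}$.

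With the basepoints pinned to $y_0$, the $1$-Lipschitz property of graph homomorphisms gives $\lambda_n(v) \in B_Y(y_0; d_X(x,v))$ for every $v \in B_X(x;n)$. Local finiteness of $Y$ makes each such ball finite, and since $\lambda_n$ is injective on $B_X(x;n)$ with image in the finite set $B_Y(y_0;n)$, the ball $B_X(x;n)$ itself is finite for every $n$. Enumerate the vertices of $X$ (countable by connectedness) as $v_0 = x, v_1, v_2, \ldots$, and run the usual diagonal argument: inductively thin $(\lambda_n)$ so that the value at $v_i$ is eventually constant along the resulting subsequence, and take the diagonal subsequence $(\mu_n)$. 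Define $\phi(v_i)$ to be this eventual value of $\mu_n(v_i)$; this yields a well-defined map $\phi : X \to Y$.

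Finally, I would verify that $\phi$ is an injective homomorphism and derive the contradiction. For any edge $v_i \sim v_j$ in $X$, both endpoints lie in $B_X(x;n)$ for $n$ large enough, so $\mu_n(v_i) \sim \mu_n(v_j)$ in $Y$ and hence $\phi(v_i) \sim \phi(v_j)$; likewise, $\phi(v_i) \neq \phi(v_j)$ whenever $v_i \neq v_j$, by injectivity of $\mu_n$ once its domain contains both vertices. Because $B_X(x;r)$ is finite, there is a single index $n_0 \geq r$ past which $\mu_{n_0}|_{B_X(x;r)} = \phi|_{B_X(x;r)}$, contradicting the non-extendability of $\mu_{n_0}$. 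The main obstacle is the bookkeeping for the diagonal extraction and, in particular, showing that injectivity of $\phi$ survives passage to the limit on all of $X$; the orbit normalization is essential here, since without it each $\lambda_n(x)$ could wander in $Y$ with no finite target set constraining the diagonalization.
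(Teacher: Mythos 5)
Your proof is correct and takes essentially the same approach as the paper's: argue by contradiction, normalize the basepoint image using the finitely many $\text{Aut}(Y)$-orbits, exploit the $1$-Lipschitz property and local finiteness of $Y$ to confine images of balls to finite sets, and extract a limiting injective homomorphism $\phi$ by a diagonal/pigeonhole argument. The only cosmetic difference is that you pass to a subsequence on which $\lambda_n(x)$ is a fixed vertex $y_0$, whereas the paper keeps the full sequence and merely arranges $d_Y(\lambda_n(x), y) \leq k$ for a uniform constant $k$ coming from the finite orbit count; both devices serve the same purpose of bounding $\lambda_n(B_X(x;l))$ inside a fixed finite ball of $Y$.
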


\begin{proof}
Fix $x\in X$ and $r\geq 0$. Assume for contradiction that the statement does not hold. Then for all $n\geq r$, choose an injective homomorphism $\lambda_n:B_{X}(x;n)\rightarrow Y$ such that for any injective homomorphism $\psi : X \rightarrow Y$, we have $\lambda_n|_{B_{X}(x;r)}\neq \psi|_{B_{X}(x;r)}$. 

Set $y = \lambda_{r}(x)$. Since there are finitely many $\text{Aut}(Y)$-orbits of vertices in $Y$, there is $k\in \N$ such that for any $z, z' \in Y$, there exists $f\in \text{Aut}(Y)$ such that $d_Y(f(z),z')\leq k$. Fix $n>r$. In particular, there is $g\in \text{Aut}(Y)$ such that $d_Y(g(\lambda_n(x)),y)\leq k$. Thus, after composing with an element of $\text{Aut}(Y)$ if necessary, we may assume that $\lambda_n$ maps $x$ within $k$ of $y$ for all $n\geq r$. Note that closed balls in $X$ are connected, and thus each map $\lambda_n$ is 1-Lipschitz, as is any graph homomorphism with connected domain. Thus we conclude that $\lambda_n(B_{X}(x;l))\subseteq B_{Y}(y;l+k)$ for all $l\geq r$ and $n\geq l$. 

Since $Y$ is locally finite and closed balls in $Y$ are connected, any closed ball in $Y$ contains finitely many vertices. Thus by the pigeonhole principle, there exist infinitely many $\lambda_n$ which agree on $B_{X}(x;r)$. Define $\phi|_{B_{X}(x;r)}$ to agree with these maps. Of these maps, infinitely many agree on $B_{X}(x;r+1)$, so define $\phi|_{B_{X}(x;r+1)}$ to agree with them. Repeat inductively to define $\phi : X\rightarrow Y$ which is an injective homomorphism and agrees with $\lambda_n$ on $B_{X}(x;r)$ for infinitely many $n \geq r$. This is a contradiction. 
\end{proof}

To prove Theorem \ref{theorem: main result}, we apply Proposition \ref{prop: existence of weakly rigid set} to $X = \flip$, $Y = \flipp$, and $x=U$ for a vertex $U$ in $\flip$. We then attempt to show that $\lambda|_{B_{\flip}(U;r)} = \phi|_{B_{\flip}(U;r)}$ implies $\lambda|_{B_{\flip}(U;R)} = \phi|_{B_{\flip}(U;R)}$. We show in Section \ref{section: proof of main theorem if d(S) = d(S')} that this is indeed true if $d(S) = d(S')$ and $S$ is non-exceptional. Thus, in these cases, $B_{\flip}(U;R)$ is the required finite rigid set for $(S,S')$. In Section \ref{section: proof of main theorem}, we make minor alterations to the strategy to apply more generally. In both cases, we show that the two maps which were originally known only to agree on $B_{\flip}(U;r)$ actually agree on a larger set of vertices. We do this by using Corollary \ref{corollary: rigid 4-cycles and 5-cycles}, which follows immediately from Lemma \ref{lemma: classification of paths of length 2} below. Recall that $T_a$ refers to a triangulation obtained by flipping another triangulation $T$ along an arc $a$ in $T$.

\begin{lemma}\label{lemma: classification of paths of length 2}
Let $\gamma$ be a path of length 2 in $\flip$. Let $T$ be the middle vertex in $\gamma$, and let $T_a$ and $T_b$ be the other vertices in $\gamma$. Then 
\begin{enumerate}[(i)]
    \item $T$, $T_a$, and $T_b$ are contained in a 4-cycle in $\flip$ if and only if $a$ and $b$ do not border a common triangle in $T$. If there is such a 4-cycle, it is unique. 
    \item $T$, $T_a$, and $T_b$ are contained in a 5-cycle in $\flip$ if and only if $a$ and $b$ border exactly one common triangle in $T$. If there is such a 5-cycle, it is unique. 
    \item $T$, $T_a$, and $T_b$ are contained in no cycles of length less than 6 in $\flip$ if and only if $a$ and $b$ border two common triangles in $T$. 
\end{enumerate}
\end{lemma}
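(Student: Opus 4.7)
The plan is to partition the analysis according to the number of triangles of $T$ that both $a$ and $b$ border. Since $a$ is flippable, it is not the inner side of a folded triangle, so it borders exactly two distinct triangles of $T$; the same holds for $b$. Hence the number of common bordering triangles is $0$, $1$, or $2$, matching the three cases of the lemma. Because these cases are mutually exclusive and exhaustive, once the forward implication and uniqueness are established in each case, the backward implications follow for free. Throughout, I use that two triangulations are adjacent in $\flip$ iff their symmetric difference has exactly two arcs and that any flip pair $(x, x')$ satisfies $i(x, x') = 1$.

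In case (i) the flips of $a$ and $b$ happen in disjoint quadrilateral regions, so they commute: $(T_a)_b = (T_b)_a$, yielding the 4-cycle $T - T_a - (T_a)_b - T_b - T$. For uniqueness and the backward direction, suppose $T'$ is a common neighbor of $T_a$ and $T_b$ distinct from $T$. Then $T_a \triangle T_b = \{a, a', b, b'\}$, and since $(T' \triangle T_a) \triangle (T' \triangle T_b) = T_a \triangle T_b$ has size $4$ while both $T' \triangle T_a$ and $T' \triangle T_b$ have size $2$, the latter sets must be disjoint and partition $\{a, a', b, b'\}$. Enumerating the six candidate subsets for $T' \triangle T_a$ and applying the constraints that a flip replaces exactly one arc, that $T'$ must be pairwise disjoint, and that $i(a,a') = i(b,b') = 1$, all possibilities except $T' \triangle T_a = \{b, b'\}$ are eliminated, forcing $T' = (T_a)_b = (T_b)_a$. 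The equality $(T_a)_b = (T_b)_a$---equivalently the disjointness of $a'$ and $b'$---holds precisely when $a$ and $b$ share no common triangle.

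In case (ii) the three triangles of $T$ containing $a$ or $b$ assemble into a pentagonal region (as $a$ and $b$ share a vertex of the common triangle), and its five triangulations realize the classical pentagon relation, producing a 5-cycle $T - T_a - T_2 - T_3 - T_b - T$. The absence of a 4-cycle here follows from case (i)'s analysis, since $a'$ and $b'$ cross inside the pentagon so $(T_a)_b \neq (T_b)_a$ and the unique 4-cycle candidate disappears. Uniqueness of the 5-cycle follows from a similar symmetric-difference analysis on length-3 paths $T_a - T' - T'' - T_b$ avoiding $T$: the cumulative symmetric difference across three flips must equal $\{a, a', b, b'\}$, and the admissible overlap patterns together with the disjointness and intersection-number constraints leave only the pentagon flip sequence.

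For case (iii), I note first that $T_a \triangle T_b = \{a, a', b, b'\}$ has size $4$, so $T_a$ and $T_b$ are not adjacent and no 3-cycle contains the path. The case (i) analysis again rules out a 4-cycle, since in case (iii) as well $a'$ and $b'$ must intersect in the degenerate two-triangle region shared by $a$ and $b$. The absence of a 5-cycle follows from the case (ii) length-3 path analysis, coupled with the observation that in case (iii) the local configuration is not a pentagon and admits no pentagon flip sequence. The main obstacle I expect in executing this plan is the careful treatment of degenerate local pictures---folded triangles, arcs on $\partial S$, and small surfaces such as $S_{1,1}$---where the ``quadrilateral'' or ``pentagon'' around the relevant arcs is not embedded in $S$. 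There the symmetric-difference bookkeeping remains valid, but intersection numbers of the flip arcs $a', b'$ and their replacements must be computed directly rather than inferred from a convex polygon model.
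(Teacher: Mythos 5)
Your proposal is structurally the same as the paper's proof: both reduce the lemma to the observation that the number of common triangles bordered by $a$ and $b$ tracks $i(a',b') \in \{0,1,2\}$, establish the absence of $3$-cycles, produce the $4$-cycle (commuting flips) and $5$-cycle (pentagon relation) in the forward directions, and obtain uniqueness and the converse directions by constraining what a putative common neighbor (resp.\ length-$3$ path) from $T_a$ to $T_b$ can look like. Your symmetric-difference bookkeeping is a harmless repackaging of the paper's "which arc can be flipped" chase.

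One caution on the framing: the opening claim that once the forward implications are proved, ``the backward implications follow for free'' by mutual exclusivity is not quite right as stated. Exhaustiveness of the three arrangements, together with ``arrangement $k$ implies conclusion $k$,'' does not give the converses unless you also check that the positive conclusions do not overlap across cases. You do address this partially — you show a $5$-cycle cannot coexist with arrangement $2$ and that a $4$-cycle cannot coexist with arrangement $1$ — but you never explicitly rule out a $5$-cycle in arrangement $0$. That piece is actually contained in the length-$3$ path analysis you sketch for case (ii): carried through (as in the paper), it shows that \emph{any} $5$-cycle through $T, T_a, T_b$ forces the fourth and fifth vertices to be $T_{ab}$ and $T_{ba}$ and forces $i(a',b')=1$, which simultaneously gives uniqueness in arrangement $1$ and nonexistence in arrangements $0$ and $2$. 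So the gap is one of presentation rather than substance; the sketch should be executed rather than invoked, and the claim about the logic of mutual exclusivity should be stated more carefully.
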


Note that each arc in a triangulation borders at most two triangles, so these cases are exhaustive. Also observe that up to homeomorphism, there are two ways in which $a$ and $b$ can border two common triangles, as in case (iii) and shown in Figure \ref{figure: picture proof two ways for two arcs to border two common triangles}. 

\begin{proof}[Proof of Lemma \ref{lemma: classification of paths of length 2}.] 

Let $a'$ be the arc in $T_a$ which is not in $T$ and $b'$ be the arc in $T_b$ which is not in $T$. See Figure \ref{figure: possible arrangements of two arcs in a triangulation} for the relative positions of $a'$ and $b'$ depending on the arrangements of $a$ and $b$ in $T$. Notice that $i(a',b') = 0$ if and only if $a$ and $b$ do not border a common triangle in $T$, $i(a',b') = 1$ if and only if $a$ and $b$ border exactly one common triangle in $T$, and $i(a',b') = 2$ if and only if $a$ and $b$ border two common triangles in $T$. 

\begin{figure}[h]
    \labellist
        \pinlabel {$a$} at 43 157 
        \pinlabel {$a$} at 156 157 
        \pinlabel {$b$} at 66 177 
        \pinlabel {$b$} at 133 177 
        \pinlabel {$b$} at 66 127 
        \pinlabel {$b$} at 178 127 
        \pinlabel {$a$} at 28 40 
        \pinlabel {$b$} at 59 41.5 
        \pinlabel {$a$} at 178 45 
        \pinlabel {$b$} at 178 28 
    \endlabellist
    \centering
    \includegraphics{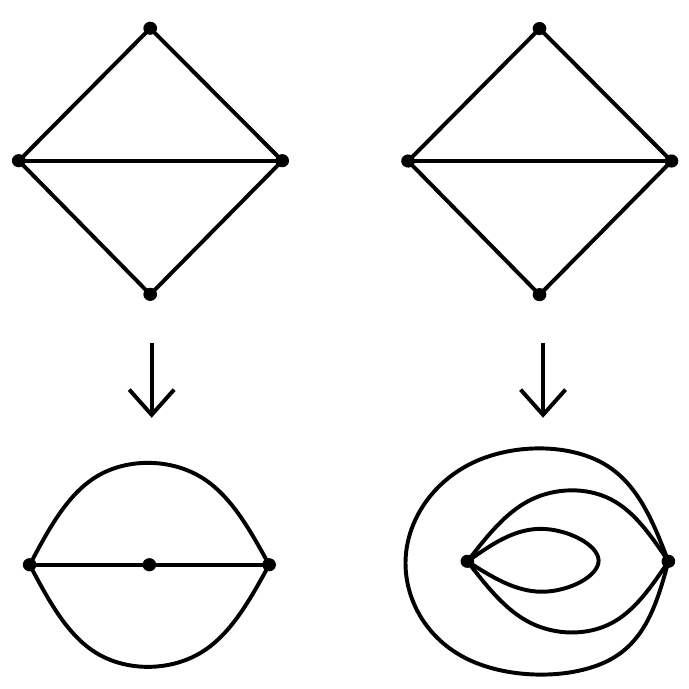}
    \caption{Illustration of the two ways in which two arcs can border two common triangles in a triangulation. Additional vertices and non-labelled sides may be identified in the bottom configurations.}
    \label{figure: picture proof two ways for two arcs to border two common triangles}
\end{figure}

\begin{figure}[h]
    \labellist
        \pinlabel {Arrangement (i)} at 63 138
        \pinlabel {Arrangement (ii)} at 188 138
        \pinlabel {Arrangement (iii)} at 63 20
        \pinlabel {Arrangement (iii)'} at 188 20
        \pinlabel {$a$} at 28 218 
        \pinlabel {$a'$} at 46 224 
        \pinlabel {$b$} at 74 187 
        \pinlabel {$b'$} at 90 191 
        \pinlabel {$a$} at 174 212 
        \pinlabel {$b$} at 197 213 
        \pinlabel {$a'$} at 165 202 
        \pinlabel {$b'$} at 210 203 
        \pinlabel {$a$} at 41 74 
        \pinlabel {$b$} at 87 75 
        \pinlabel {$a'$} at 78 93 
        \pinlabel {$b'$} at 52 93 
        \pinlabel {$a$} at 219 80 
        \pinlabel {$b$} at 220 64 
        \pinlabel {$a'$} at 165 84.5 
        \pinlabel {$b'$} at 162.5 62.5 
    \endlabellist
    \centering
    \includegraphics{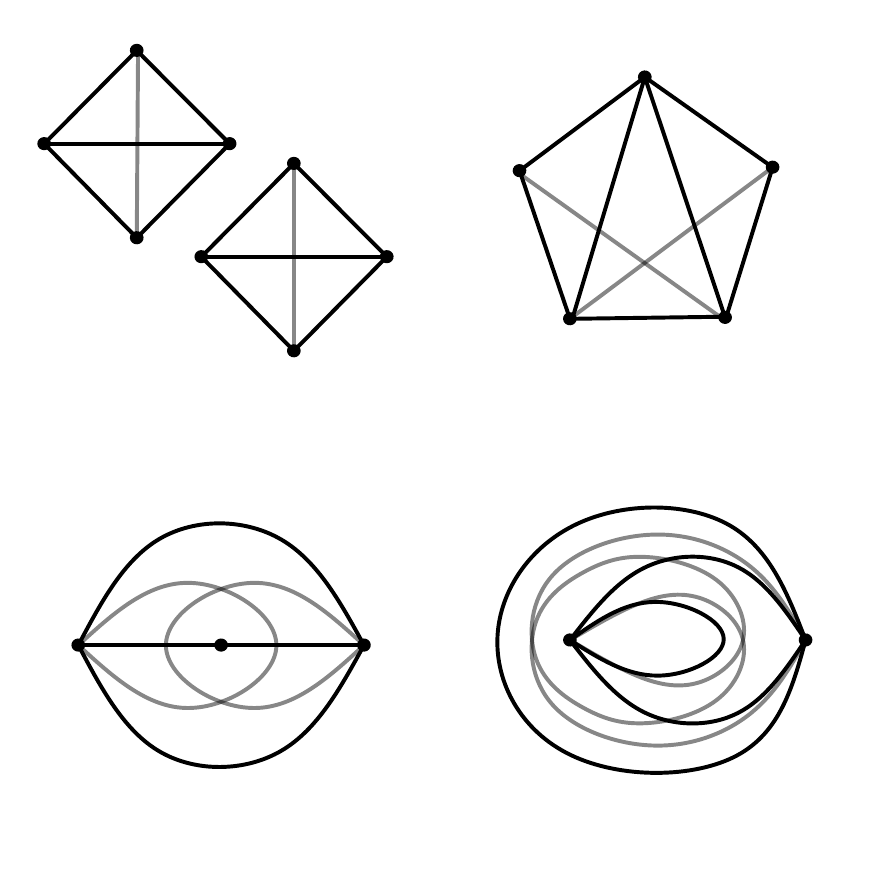}
    \caption{All possible arrangements of two arcs $a$ and $b$ in a triangulation $T$, up to homeomorphism. (There may be additional identification of outer sides and vertices.) The arc $a'$ is the element of $T_a\backslash T$ and the arc $b'$ is the element of $T_b\backslash T$.}
    \label{figure: possible arrangements of two arcs in a triangulation}
\end{figure}

First, we show that there are no 3-cycles in $\flip$, so (iii) follows from (i) and (ii). Let $D = T\backslash \{a,b\}$. Observe that $T_a = D\cup \{a',b\}$ and $T_b = D\cup \{a,b'\}$. In order for $T_a$ and $T_b$ to be connected by an edge in $\flip$, they must agree on all but one arc. However, we can see from Figure \ref{figure: possible arrangements of two arcs in a triangulation} that regardless of the positions of $a$ and $b$ in $T$, we have that $a$, $b$, $a'$, and $b'$ are all distinct.

We now prove (i). If $a$ and $b$ do not border a common triangle, then $T_{ab} = T_{ba}$ (where e.g. $T_{ab} = (T_a)_b$) and this triangulation differs from both $T_a$ and $T_b$ by a flip and is distinct from $T$, so these four vertices form a 4-cycle in $\flip$. Conversely, assume there is a vertex $V$ adjacent to both $T_a$ and $T_b$ which is not $T$. Then $V$ can be obtained by flipping $T_a$ along an arc other than $a'$ or by flipping $T_b$ along an arc other than $b'$. This means that
\[V=(T\backslash \{a,b\}) \cup \{a',b'\}=T_{ab}=T_{ba}\]
and  $i(a',b') = 0$. Thus $a$ and $b$ are in distinct triangles in $T$ (see Figure \ref{figure: possible arrangements of two arcs in a triangulation}(i)) and $V$ is unique. 

Now suppose $a$ and $b$ border exactly one common triangle in $T$. Then $T$, $T_a$, and $T_b$ are contained in the 5-cycle shown in Figure \ref{figure: illustration (a) for the proof of lemma classification of paths of length 2}. Conversely, suppose there is a 5-cycle containing $T$, $T_a$, and $T_b$. Say $V$ (adjacent to $T_a$) and $W$ (adjacent to $T_b$) are the other vertices. See Figure \ref{figure: illustration (b) for the proof of lemma classification of paths of length 2}. Then there are two paths from $T_a$ to $W$. Since one path has length two, we know that $W$ can be obtained from $T_a$ by two flips, hence $W$ and $T_a$ differ by two arcs. The other path has length three and obtains $W$ from $T_a$ by first flipping $T_a$ along $a'$, then flipping the resulting triangulation, $T$, along $b$, and finally flipping the resulting triangulation, $T_b$, along some other arc $c$ in $T_b$ which is not equal to $b'$. Say that $c'$ is the arc which replaces $c$ after this final flip. Thus one of $a'$, $b$, and $c$ must equal one of $a$, $b'$, and $c'$ since $W$ and $T_a$ differ by only two arcs. As $a$, $b$, $a'$, and $b'$ are all distinct, we must have one of $a' = c'$, $b = c'$, $a = c$, or $b' = c$. We cannot have $b = c'$ or $b' = c$ as either of these imply that $T = W$. If both $a' = c'$ and $a = c$, then $W$ differs from $T_a$ by a single flip, but this would result in a 3-cycle. Hence exactly one of $a' = c'$ and $a = c$ holds. 

\begin{figure}[h]
    \labellist
        \pinlabel {$b$} at 21 164 
        \pinlabel {$a$} at 56 166 
        \pinlabel {$b$} at 125 243.5 
        \pinlabel {$a'$} at 155 225 
        \pinlabel {$a'$} at 257 149 
        \pinlabel {$a$} at 95 45 
        \pinlabel {$b'$} at 64.5 25 
        \pinlabel {$b'$} at 193.5 25 
        \pinlabel {$T$} at -2 148
        \pinlabel {$T_b$} at 34 22
        \pinlabel {$T_a$} at 97 227
        \pinlabel {$T_{ab}$} at 288 148
        \pinlabel {$T_{ba}$} at 247 22
    \endlabellist
    \centering
    \includegraphics{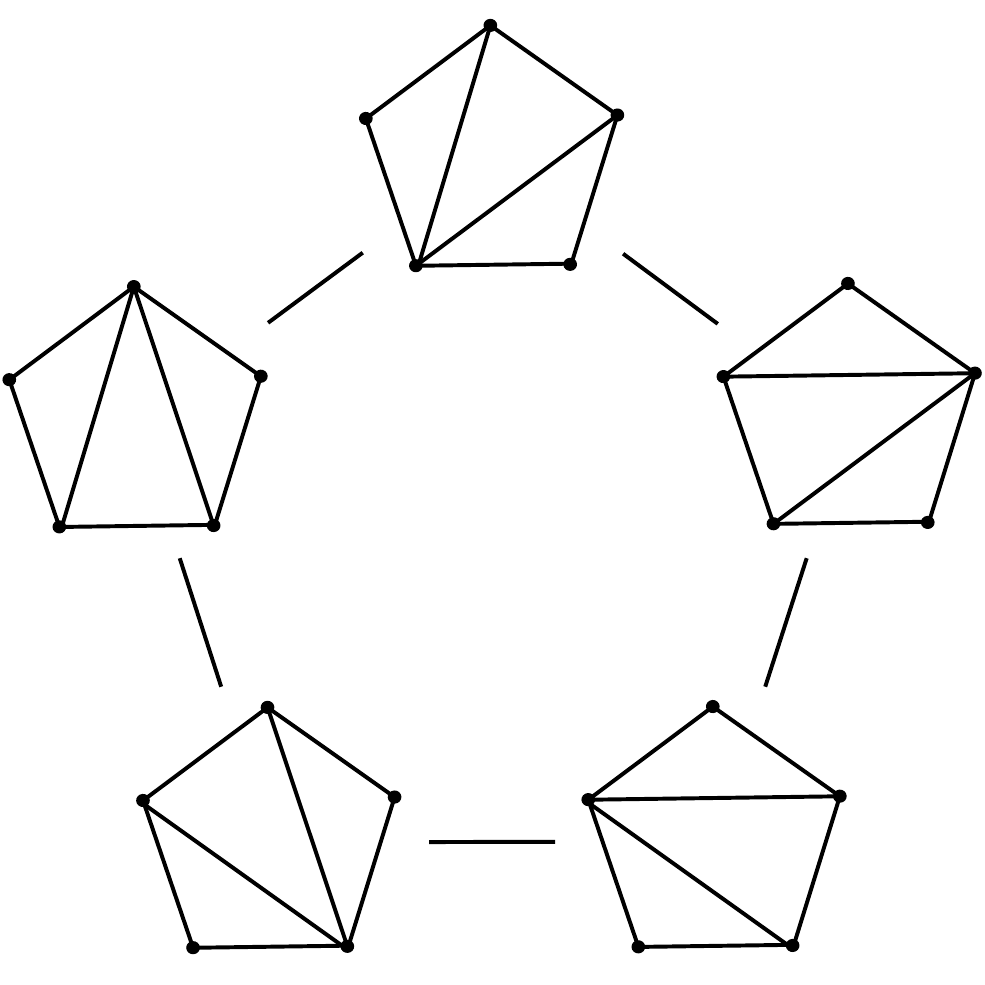}
    \caption{A 5-cycle in $\flip$}
    \label{figure: illustration (a) for the proof of lemma classification of paths of length 2}
\end{figure}

\begin{figure}[h]
    \labellist
        \pinlabel {$T$} at -5 58
        \pinlabel {$T_a$} at 34 105
        \pinlabel {$T_b$} at 34 6
        \pinlabel {$V$} at 85 86
        \pinlabel {$W$} at 85 24
    \endlabellist
    \centering
    \includegraphics{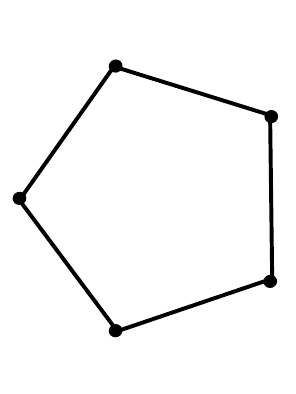}
    \caption{Illustration for the proof of Lemma \ref{lemma: classification of paths of length 2}}
    \label{figure: illustration (b) for the proof of lemma classification of paths of length 2}
\end{figure}

Suppose $a' = c'$ and $a\neq c$. Then $T_a = (T\backslash \{a\}) \cup \{c'\}$ implies $i(a,c') = 1$. However, $a, c'\in W$ implies $i(a,c') = 0$, which is a contradiction. Thus we conclude that $a = c$, i.e. $W = T_{ba}$. Similarly, we see that $V = T_{ab}$. Therefore $T_{ab}$ and $T_{ba}$ differ by a flip of $a'$ to $b'$ which means $i(a',b') = 1$. Hence $a$ and $b$ border exactly one common triangle in $T$ (see Figure \ref{figure: possible arrangements of two arcs in a triangulation}(ii)) and the 5-cycle containing $T$, $T_a$, and $T_b$ is unique. 
\end{proof}

The corollary below follows immediately from the uniqueness of the 4-cycle or 5-cycle containing a path of length two in $\flip$ from the proceeding lemma. 

\begin{corollary}\label{corollary: rigid 4-cycles and 5-cycles}
Let $\mathcal{Y}$ be a subgraph of $\flip$ and $\phi, \lambda:\mathcal{Y} \rightarrow \flipp$ two injective homomorphisms. If $\phi$ and $\lambda$ agree on three adjacent vertices of a 4-cycle or 5-cycle in $\mathcal{Y}$, then they agree on all vertices in the 4-cycle or 5-cycle.
\end{corollary}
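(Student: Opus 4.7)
The plan is to deduce the corollary directly from the uniqueness clauses in Lemma \ref{lemma: classification of paths of length 2}. Let $C$ be a 4-cycle or 5-cycle in $\mathcal{Y}$, and suppose $\phi$ and $\lambda$ agree on three adjacent vertices of $C$; call them $T_a, T, T_b$, where $T$ is the middle vertex of the length-2 subpath, and write $a, b$ for the arcs in $T$ on which $T_a, T_b$ differ from $T$. Since $\phi$ and $\lambda$ are injective simplicial homomorphisms, they send $C$ to a length-4 (respectively length-5) closed walk on distinct vertices in $\flipp$, i.e. a 4-cycle (respectively 5-cycle) in $\flipp$.

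First I would handle the 4-cycle case. Both $\phi(C)$ and $\lambda(C)$ are 4-cycles in $\flipp$ containing the common path $\phi(T_a) = \lambda(T_a)$, $\phi(T) = \lambda(T)$, $\phi(T_b) = \lambda(T_b)$ as three adjacent vertices. By part (i) of Lemma \ref{lemma: classification of paths of length 2} applied in $\flipp$, there is a unique such 4-cycle, so $\phi(C) = \lambda(C)$ as subgraphs. The remaining (fourth) vertex of $C$ is the unique vertex of $\flipp$ lying on this 4-cycle and adjacent to both $\phi(T_a)$ and $\phi(T_b)$ but distinct from $\phi(T)$; hence the images of that fourth vertex under $\phi$ and $\lambda$ coincide.

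The 5-cycle case is essentially the same. By part (ii) of Lemma \ref{lemma: classification of paths of length 2}, the 5-cycle in $\flipp$ containing the common path $\phi(T_a), \phi(T), \phi(T_b)$ as three adjacent vertices is unique, so $\phi(C) = \lambda(C)$ as subgraphs. Within this 5-cycle, the two remaining vertices are distinguished combinatorially: one is the neighbor of $\phi(T_b)$ other than $\phi(T)$, and the other is the neighbor of $\phi(T_a)$ other than $\phi(T)$. Since $\phi$ and $\lambda$ map the corresponding vertices of $C$ to neighbors of the same endpoints (using that each is an injective homomorphism), these assignments are forced, and $\phi$ and $\lambda$ agree on all vertices of $C$.

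There is no real obstacle here; the work is all in Lemma \ref{lemma: classification of paths of length 2}, and the only subtlety is checking that the ordering of the 5-cycle (and not merely its vertex set) is determined by three consecutive vertices, which follows from the fact that each vertex of a 5-cycle has exactly two neighbors on the cycle.
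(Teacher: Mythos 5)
Your proof is correct and takes essentially the same approach as the paper, which likewise derives the corollary from the uniqueness clauses in Lemma \ref{lemma: classification of paths of length 2} applied in $\flipp$: both images $\phi(C)$ and $\lambda(C)$ are 4- (or 5-) cycles through the common length-2 path, hence coincide. You make explicit the routine point the paper leaves implicit for the 5-cycle case, namely that the two remaining vertices are distinguished combinatorially by which of $\phi(T_a)$ or $\phi(T_b)$ they neighbor, so their images are forced.
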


\section{Illustrative Proof of Theorem \ref{theorem: main result} for Special Case}\label{section: proof of main theorem if d(S) = d(S')}

In Section \ref{section: proof of main theorem}, we give a complete proof of Theorem \ref{theorem: main result}. In this section, we provide a simpler proof for the case where $d(S) = d(S')$ and $S$ is non-exceptional, illustrating some of the key ideas. The proof in Section \ref{section: proof of main theorem} does not rely on the result in this section. 

\begin{proof}[Proof of Theorem \ref{theorem: main result} if $d(S)=d(S')$ and $S$ is non-exceptional.]
Fix $S$ and $S'$ with $d(S)=d(S')$. Fix a vertex $U\in \flip$. If for some $N\in\mathbb{N}$, there are no injective homomorphisms $\lambda:B_{\flip}(U;N)\rightarrow \flipp$, then $\mathcal{X}:= B_{\flip}(U;N)$ is a finite rigid set for $(S,S')$ trivially. Then suppose that for any $N\in\mathbb{N}$, an injective homomorphism $\lambda:B_{\flip}(U;N) \rightarrow \flipp$ does exist. Fix $r\geq 1$. Proposition \ref{prop: existence of weakly rigid set} says that there exists $R\geq r$ such that for any injective homomorphism $\lambda:B_{\flip}(U;R) \rightarrow \flipp$, there exists an injective homomorphism $\phi : \flip \rightarrow \flipp$ such that $\lambda|_{B_{\flip}(U;r)} = \phi|_{B_{\flip}(U;r)}$. We will prove that in fact, $\lambda|_{B_{\flip}(U;R)} = \phi|_{B_{\flip}(U;R)}$ and thus we can define our finite rigid set $\mathcal{X}$ to be $B_{\flip}(U;R)$.

It suffices to show that $\lambda|_{B_{\flip}(U;t)} = \phi|_{B_{\flip}(U;t)}$ implies $\lambda|_{B_{\flip}(U;t+1)} = \phi|_{B_{\flip}(U;t+1)}$ for $r\leq t \leq R-1$. Fix $V\in B_{\flip}(U;t+1)\backslash B_{\flip}(U;t)$ and assume $\lambda|_{B_{\flip}(U;t)} = \phi|_{B_{\flip}(U;t)}$. Our goal is to show that $\lambda(V) = \phi(V)$. There exists a path $\gamma$ of length $t+1$ from $U$ to $V$. Let $T$ be the vertex in $\gamma$ such that $d_{\flip}(U,T) = t$. Then there is an arc $a$ in $T$ such that $V = T_a$. We will henceforth refer to $V$ as $T_a$. Further, let $T_b$ refer to the vertex in $\gamma$ such that $d_{\flip}(U,T_b) = t-1$ and $b$ be the arc in $T$ which is flipped along to obtain $T_b$. See Figure \ref{figure: figure for attachment proof combined}(i). We consider three cases based on the relative positions of $a$ and $b$ in $T$.

\begin{figure}[h]
    \labellist
        \pinlabel {(i)} at 46 2 
        \pinlabel {$B_{\flip}(U;t)$} at 46 28 
        \pinlabel {$U$} at 46 65 
        \pinlabel {$T_b$} at 67 77 
        \pinlabel {$T$} at 86 85 
        \pinlabel {$T_a$} at 100 90 
        \pinlabel {(ii)} at 174 2
        \pinlabel {$B_{\flip}(U;t)$} at 174 28 
        \pinlabel {$U$} at 174 65 
        \pinlabel {$T_b$} at 195 77 
        \pinlabel {$T$} at 214 85 
        \pinlabel {$T_a$} at 228 90 
        \pinlabel {$W$} at 223 61 
        \pinlabel {(iii)} at 304 2
        \pinlabel {$B_{\flip}(U;t)$} at 304 28 
        \pinlabel {$U$} at 302 65 
        \pinlabel {$T_b$} at 323 77 
        \pinlabel {$T$} at 341 86 
        \pinlabel {$T_a$} at 356 90 
        \pinlabel {$T_{ba}$} at 354 59 
        \pinlabel {$T_{ab}$} at 367 71 
    \endlabellist
    \centering
    \includegraphics{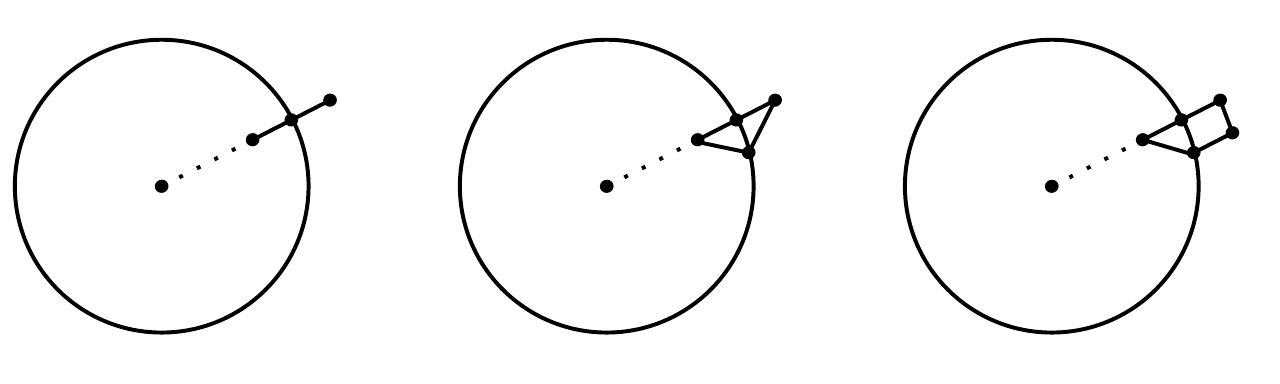}
    \caption{Arrangements of vertices near the boundary of a ball in $\flip$}
    \label{figure: figure for attachment proof combined}
\end{figure}

Case 1: The arcs $a$ and $b$ do not border any common triangles in $T$. Lemma \ref{lemma: classification of paths of length 2} tells us that $T_{ab} = T_{ba}$; call this triangulation $W$.  Then $T$, $T_a$, $T_b$, and $W$ form a 4-cycle. See Figure \ref{figure: figure for attachment proof combined}(ii). Since $T$, $T_b$, and $W$ are in $B_{\flip}(U;t)$, we see that $\lambda$ and $\phi$ agree on these vertices, and thus Corollary \ref{corollary: rigid 4-cycles and 5-cycles} implies that they also agree on $T_a$.

Case 2: The arcs $a$ and $b$ border exactly one common triangle in $T$. By Lemma \ref{lemma: classification of paths of length 2}, $T_{ab}$ and $T_{ba}$ are connected by an edge and the five vertices $T$, $T_a$, $T_b$, $T_{ab}$, and $T_{ba}$ form a 5-cycle. See Figure \ref{figure: figure for attachment proof combined}(iii). Since $T$, $T_b$, and $T_{ba}$ are in $B_{\flip}(U;t)$, we see that $\lambda$ and $\phi$ agree on these vertices, and hence Corollary \ref{corollary: rigid 4-cycles and 5-cycles} implies that they also agree on $T_a$ (and $T_{ab}$).

Case 3: The arcs $a$ and $b$ border exactly two common triangles. Then $T$ contains one of the following two arrangements shown in Figure \ref{figure: arrangement of arcs sharing two triangles}. Note that the arcs labelled $c$ and $d$ in the figure are not equal since $S$ is non-exceptional. Now by Theorem \ref{theorem: aramayona-koberda-parlier}, there is a homeomorphism $h:S \rightarrow S'$ which induces $\phi$, and consequently, $\phi$ is a graph isomorphism. This implies that $\text{deg}_{\flip}(T) = \text{deg}_{\flipp}(\phi(T))$ and since $\phi(T) = \lambda(T)$, it also follows that $\text{deg}_{\flip}(T) = \text{deg}_{\flipp}(\lambda(T))$. A flip of $T$ along any other arc $e\neq a,b$ will form a 4-cycle or a 5-cycle with $T$ and $T_b$ since $e$ can border at most one common triangle with $b$. By our arguments above, $\phi(T_e) = \lambda(T_e)$. Hence we conclude that $\phi$ and $\lambda$ agree on all of the vertices adjacent to $T$ in $\flip$ besides possibly $T_a$, and thus, they also agree on $T_a$.

Since $V=T_a$ was an arbitrary vertex in $B_{\flip}(U;t+1)\backslash B_{\flip}(U;t)$, we conclude that this holds for all such vertices, thus $\lambda|_{B_{\flip}(U;t+1)}=\phi|_{B_{\flip}(U;t+1)}$. Then by induction, it follows that $\lambda = \phi|_\mathcal{X}$. 

Now, say $\phi' : \flip \rightarrow \flipp$ is another injective homomorphism such that $\phi'|_{\mathcal{X}}  =  \lambda$. Then $\phi|_{\mathcal{X}}  =  \phi'|_{\mathcal{X}}$ and recall that we defined $\mathcal{X}  =  B_{R}(U)$. Using induction and the arguments above, we see that $\phi|_{B_{\flip}(U;i)}  =  \phi'|_{B_{\flip}(U;i)}$ for any $i\geq R$, hence $\phi  =  \phi'$. Thus $\phi$ is the unique injective homomorphism which extends $\lambda$. Then $\mathcal{X}$ is a finite rigid set for the pair $(S,S')$. 
\end{proof}

\begin{figure}[h]
    \labellist
        \pinlabel ${a}$ at 26 47 
        \pinlabel ${b}$ at 59 48 
        \pinlabel ${c}$ at 16 64 
        \pinlabel ${d}$ at 16 18 
        \pinlabel ${a}$ at 143 61 
        \pinlabel ${b}$ at 143 25 
        \pinlabel ${c}$ at 177 43 
        \pinlabel ${d}$ at 123 44 
    \endlabellist
    \centering
    \includegraphics{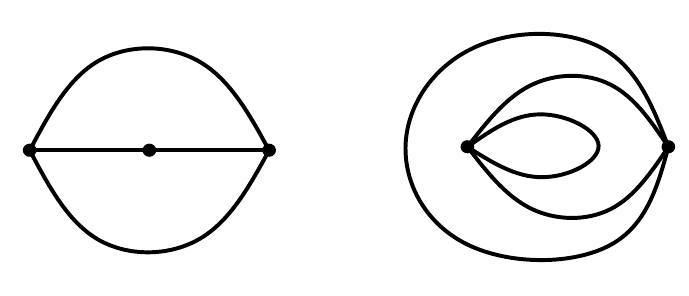}
    \caption{Arrangements of arcs where two arcs border two common triangles}
    \label{figure: arrangement of arcs sharing two triangles}
\end{figure}

Notice that this argument will not work if $d(S)<d(S')$ or if $S$ is exceptional. We give a more general proof that covers this case in the following section.

\section{Proofs of Theorems \ref{theorem: main result} and \ref{theorem: main result add-on}} \label{section: proof of main theorem}

Here we adjust our proof to avoid relying on the existence of a homeomorphism $S\rightarrow S'$ as we did in Section \ref{section: proof of main theorem if d(S) = d(S')}. This way, our result will hold more generally. Unfortunately, these adjustments demand a level of complexity to our surface. We call surfaces which do not have this required complexity \textit{simple}. They are $S  \cong  S_{0,n}$ with $1\leq n\leq 4$, $S_{0,0,(p_1)}$ with $p_1 \geq 1$, $S_{0,1,(p_1))}$ with $p_1\geq 1$, or one of $S_{0,2,(1)}$, $S_{0,2,(2)}$, $S_{0,0,(1,1)}$, $S_{0,0,(1,2)}$, $S_{0,0,(2,2)}$, $S_{1,1}$, and $S_{1,0,(1)}$. We call all other surfaces \emph{non-simple}. In this section, we first prove Theorem \ref{theorem: main result} for non-simple surfaces. Then we utilize different methods to prove it for the simple surfaces. Finally, we prove Theorem \ref{theorem: main result add-on}.

Assuming the surface is non-simple gives us the following property.

\begin{lemma}\label{lemma: nonsimple surfaces are complex enough}
Suppose a surface $S$ is non-simple and $T$ is a triangulation of $S$ containing arcs $a$, $b$, $c$, and $d$ in one of the configurations in Figure \ref{figure: arrangement of arcs sharing two triangles}. Then $c\neq d$ and one of $c$ and $d$ borders a second non-folded triangle in $T$ which has at least one side which is not equal to $c$ or $d$ and is not contained in $\partial S$. 
\end{lemma}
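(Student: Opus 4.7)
My plan is to prove the two conclusions separately, using Euler characteristic and connectedness arguments to show that any ``bad'' configuration forces $S$ to lie in the simple list.

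\textbf{Step 1: $c \neq d$.} Suppose for contradiction that $c = d$. Then the two triangles $\Delta_1$ (with sides $a, b, c$) and $\Delta_2$ (with sides $a, b, d$) share all three sides, so $\Delta_1 \cup \Delta_2$ has no unshared side and is a closed subsurface of $S$. By connectedness of $S$ we get $\Delta_1 \cup \Delta_2 = S$, and so $S$ admits a triangulation with $F = 2$ triangles and $E = 3$ arcs, giving $\chi(S) = V - 1$ where $V \leq 3$ is the number of distinct vertices. The compact orientable possibilities are $S_{0,3}$ (with $V = 3$) and $S_{1,1}$ (with $V = 1$, the standard minimal triangulation of the torus); both appear in the simple list, contradicting non-simplicity.

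\textbf{Step 2: second non-folded triangle with the desired side.} An arc in a triangulation fails to border two triangles only when it is the inner side of a folded triangle. I would first argue that at least one of $c, d$ borders a triangle $\Delta_3 \notin \{\Delta_1, \Delta_2\}$: if both $c$ and $d$ were inner sides of folded triangles, those folded triangles would have to be $\Delta_1$ and $\Delta_2$ themselves, and combined with the two configurations in the figure one verifies the subsurface $\Delta_1 \cup \Delta_2$ already fills $S$ in a pattern that lies in the simple list. Next I would show this $\Delta_3$ is non-folded: if $c$ were its inner side, then $c$ would have too many triangle-sides when combined with its role in $\Delta_1$; if $c$ is the outer side of a folded $\Delta_3$, the subsurface $\Delta_1 \cup \Delta_2 \cup \Delta_3$ has small Euler characteristic and again forces $S$ to be simple. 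Finally, with non-folded $\Delta_3$ having sides $c, e, f$, I would show one of $e, f$ is neither equal to $d$ nor contained in $\partial S$: if both lay in $\{d\} \cup \partial S$, then $\Delta_1 \cup \Delta_2 \cup \Delta_3$ would close up or run into the boundary in a manner compatible only with surfaces on the simple list.

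\textbf{Main obstacle.} The principal difficulty is the systematic case analysis: the two figure configurations, combined with possible vertex identifications, foldings of the adjacent triangle, and boundary incidences, produce many subcases. In each, the core task is to compute the marked-point count, boundary data, and Euler characteristic of the relevant subsurface ($\Delta_1 \cup \Delta_2$ or $\Delta_1 \cup \Delta_2 \cup \Delta_3$) and verify that every violating case matches an entry in the explicit simple-surface list given just before the lemma, so that the contrapositive yields the conclusion for non-simple $S$.
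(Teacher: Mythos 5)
Your overall strategy matches the paper's: show by case analysis that any configuration violating the conclusion forces $S$ onto the explicit simple list, with the Euler-characteristic computations playing essentially the same role that the paper's direct recognition of surface types does. Step 1 is sound and matches the paper (though you should verify that $V = 2$ cannot occur for the two configurations in the figure, or note that the figure itself constrains the vertex identifications).

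However, Step 2 contains a real gap and a piece of confused reasoning. The gap: you begin with ``an arc in a triangulation fails to border two triangles only when it is the inner side of a folded triangle,'' implicitly treating $c$ and $d$ as arcs of the triangulation. But $c$ and $d$ are \emph{sides of triangles}, which by definition are either arcs of $T$ or segments of $\partial S$. If $c \subseteq \partial S$, it borders only one triangle and your analysis never enters. The paper has to handle this first (if both $c, d \subseteq \partial S$ then $S \cong S_{0,1,(2)}$ or $S_{0,0,(1,1)}$, both simple), and this case threads through the entire remaining argument: you must track at each stage whether each newly exposed side is a boundary segment. The confused step: you write that if $c, d$ were inner sides of folded triangles, ``those folded triangles would have to be $\Delta_1$ and $\Delta_2$ themselves.'' Once $c \neq d$ is established, $\Delta_1$ and $\Delta_2$ each have three distinct sides and so are non-folded; the correct observation is simply that $c$ already borders $\Delta_1$, and an inner side of a folded triangle borders that folded triangle twice and nothing else, so $c$ cannot be the inner side of any folded triangle at all. (It \emph{can} be the outer side of a folded second triangle, which is one of the subcases the paper must and does treat.) Finally, the bulk of the lemma really is the systematic enumeration you defer with ``one verifies''; the paper walks through folded/non-folded second triangle, each side's membership in $\partial S$, possible equality with $d$, and a third-triangle layer, checking each terminal case against the simple list. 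Your outline points at the right targets but would need all of that carried out, plus the boundary-segment branch for $c, d$, to be a complete proof.
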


\begin{proof}
Suppose $c  =  d$. If $T$ contains the first arrangement in Figure \ref{figure: arrangement of arcs sharing two triangles}, then $S   =   S_{0,3}$, in which case $S$ is simple. If $T$ contains the second arrangement, then $S\cong S_{1,1}$, which is also simple. Thus we conclude that $c\neq d$. Suppose $c,d\subseteq \partial S$. If $T$ contains the first arrangement in Figure \ref{figure: arrangement of arcs sharing two triangles}, then $S \cong S_{0,1,(2))}$, in which case $S$ is simple. If $T$ contains the second arrangement, then $S\cong S_{0,0,(1,1)}$, which is also simple. So without loss of generality, assume $c\not \subseteq \partial S$. Hence $c$ borders a second triangle $\Delta$ in $T$. 

First suppose that $\Delta$ is folded. See Figure \ref{figure: complex proof 1}. If $d\subseteq \partial S$, then $S \cong S_{0,2,(1)}$, in which case $S$ is simple. Thus $d\not\subseteq \partial S$ and there is another triangle $\Delta'$ in $T$ with $d$ as a side. If $\Delta'$ is also folded, then $S \cong S_{0,4}$, a simple surface. We therefore may assume that $\Delta'$ is non-folded. Let $e$ and $f$ be the other sides of $\Delta'$. See Figure \ref{figure: complex proof 2}. If $e,f\subseteq \partial S$, then $S \not \cong S_{0,2,(2)}$, a simple surface. Thus one of $e$ and $f$ is not contained in $\partial S$ and we are done, since that side cannot be equal to $c$.

\begin{figure}[h]
    \labellist
        \pinlabel ${a}$ at 30 33 
        \pinlabel ${b}$ at 68 33 
        \pinlabel ${c}$ at 50 37 
        \pinlabel ${d}$ at 50 11 
        \pinlabel ${a}$ at 143 61 
        \pinlabel ${b}$ at 143 25 
        \pinlabel ${c}$ at 177 43 
        \pinlabel ${d}$ at 123 44 
    \endlabellist
    \centering
    \includegraphics{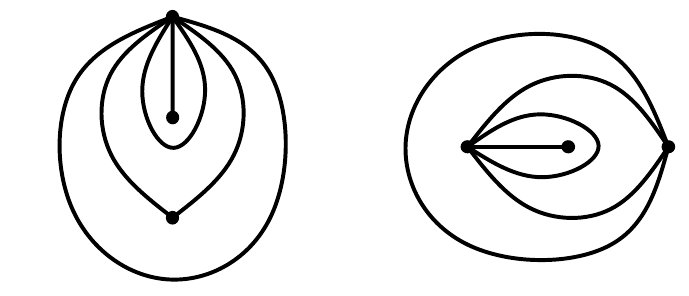}
    \caption{Triangles sharing two common sides with an adjacent folded triangle}
    \label{figure: complex proof 1}
\end{figure}

\begin{figure}[h]
    \labellist
        \pinlabel ${a}$ at 36 54 
        \pinlabel ${b}$ at 74 54 
        \pinlabel ${c}$ at 56 58 
        \pinlabel ${d}$ at 56 32 
        \pinlabel ${e}$ at 36 10 
        \pinlabel ${f}$ at 74 9 
        \pinlabel ${a}$ at 162.5 75 
        \pinlabel ${b}$ at 162.5 39 
        \pinlabel ${c}$ at 196.5 57 
        \pinlabel ${d}$ at 142.5 58 
        \pinlabel ${e}$ at 122 76 
        \pinlabel ${f}$ at 121 35 
    \endlabellist
    \centering
    \includegraphics{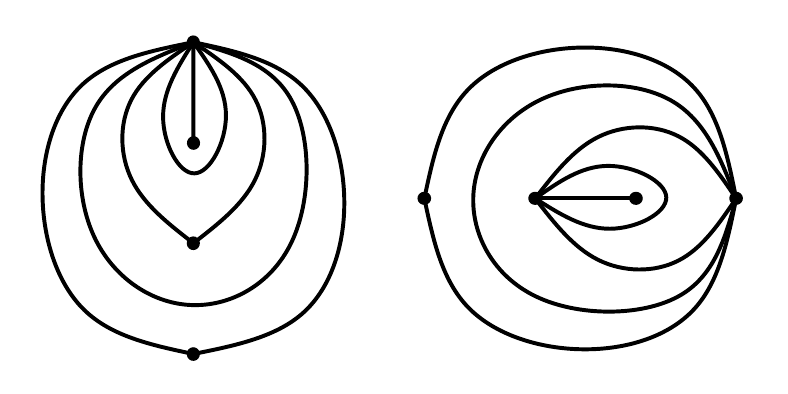}
    \caption{Triangles sharing two common sides with an adjacent folded triangle and an adjacent non-folded triangle}
    \label{figure: complex proof 2}
\end{figure}

Now suppose $\Delta$ is non-folded. Call its sides $e$ and $f$. See Figure \ref{figure: complex proof 3}. Suppose  $d,e,f\subseteq \partial S$. If $T$ contains the first arrangement in Figure \ref{figure: arrangement of arcs sharing two triangles}, then $S \cong S_{0,1,(3)}$, in which case $S$ is simple. If $T$ contains the second arrangement, then $S\cong S_{0,0,(1,2)}$, in which case $S$ is also simple. Thus one of $d$, $e$, and $f$ is not contained in $\partial S$. 

\begin{figure}[h]
    \labellist
        \pinlabel ${a}$ at 26 42 
        \pinlabel ${b}$ at 59 43 
        \pinlabel ${c}$ at 43 72 
        \pinlabel ${d}$ at 43 1 
        \pinlabel ${e}$ at 23 80 
        \pinlabel ${f}$ at 65 83 
        \pinlabel ${a}$ at 140 66 
        \pinlabel ${b}$ at 140 30 
        \pinlabel ${c}$ at 174 49 
        \pinlabel ${d}$ at 120 50 
        \pinlabel ${e}$ at 157 55 
        \pinlabel ${f}$ at 157 41.5 
    \endlabellist
    \centering
    \includegraphics{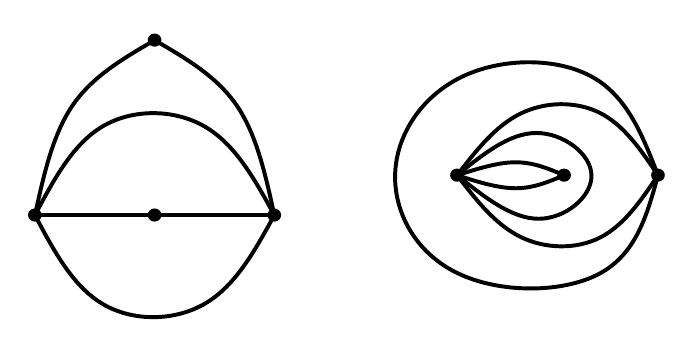}
    \caption{Triangles sharing two common sides with an adjacent non-folded triangle}
    \label{figure: complex proof 3}
\end{figure}

Suppose that either $e$ or $f$ is not contained in $\partial S$. Without loss of generality, say $e$ is not contained in $\partial S$. If $e\neq d$, we are done. If $e  =  d$, then $f\neq d$. If $f \not \subseteq \partial S$, we are done. So suppose $f \subseteq \partial S$. If $T$ contains the first arrangement in Figure \ref{figure: arrangement of arcs sharing two triangles}, then $S \cong S_{0,2,(1)}$, in which case $S$ is simple. If $T$ contains the second arrangement, then $S \cong S_{1,0,(1)}$, which is, once again, a simple surface. 

Finally, suppose $e,f\subseteq \partial S$ but $d\not \subseteq \partial S$. Thus there is another triangle $\Delta''$ with $d$ as a side. If $\Delta''$ is folded, then $S \cong S_{0,2,(2)}$, a simple surface. See Figure \ref{figure: complex proof 4}. Hence we conclude that $\Delta''$ is non-folded. Let $g$ and $h$ be the other sides of $\Delta''$. See Figure \ref{figure: complex proof 5}. If one of $g$ or $h$ is not contained in $\partial S$, we are done, since it cannot be equal to $c$. Suppose $g$ and $h$ are contained in $\partial S$. Thus if $T$ contains the first arrangement in Figure \ref{figure: arrangement of arcs sharing two triangles}, then $S \cong S_{0,1,(4)}$, in which case $S$ is simple. If $T$ contains the second arrangement, then $S \cong S_{0,0,(2,2)}$, in which case $S$ again is simple. This completes the proof.
\end{proof}

\begin{figure}[h]
    \labellist
        \pinlabel ${a}$ at 35 53 
        \pinlabel ${b}$ at 70 54 
        \pinlabel ${c}$ at 52 74 
        \pinlabel ${d}$ at 52 47 
        \pinlabel ${e}$ at 27 92 
        \pinlabel ${f}$ at 78 95 
        \pinlabel ${a}$ at 150 66 
        \pinlabel ${b}$ at 150 30 
        \pinlabel ${c}$ at 184 49 
        \pinlabel ${d}$ at 130 50 
        \pinlabel ${e}$ at 167 55 
        \pinlabel ${f}$ at 167 41.5 
    \endlabellist
    \centering
    \includegraphics{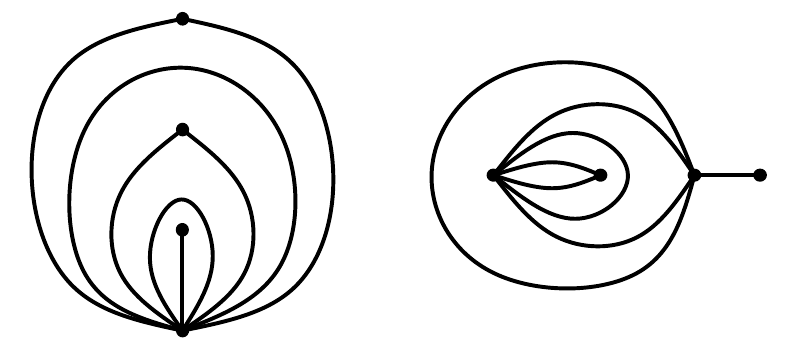}
    \caption{Triangles sharing two common sides with an adjacent folded triangle and an adjacent non-folded triangle}
    \label{figure: complex proof 4}
\end{figure}

\begin{figure}[h]
    \labellist
        \pinlabel ${a}$ at 26 62 
        \pinlabel ${b}$ at 59 63 
        \pinlabel ${c}$ at 43 92 
        \pinlabel ${d}$ at 43 21 
        \pinlabel ${e}$ at 23 100 
        \pinlabel ${f}$ at 65 103 
        \pinlabel ${g}$ at 23 12 
        \pinlabel ${h}$ at 65 12 
        \pinlabel ${a}$ at 140 74 
        \pinlabel ${b}$ at 140 38 
        \pinlabel ${c}$ at 174 57 
        \pinlabel ${d}$ at 120 58 
        \pinlabel ${e}$ at 157 63 
        \pinlabel ${f}$ at 157 49.25 
        \pinlabel ${g}$ at 110 93 
        \pinlabel ${h}$ at 110 19 
    \endlabellist
    \centering
    \includegraphics{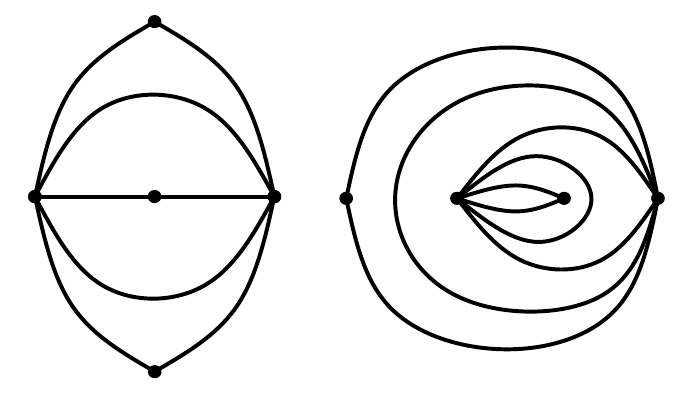} 
    \caption{Triangles sharing two common sides with two adjacent non-folded triangles}
    \label{figure: complex proof 5}
\end{figure}

Now we can prove our next lemma. We use the following notation: If $\mathcal{G}, \mathcal{H}$ are two subgraphs of $\flip$, then $\langle \mathcal{G}, \mathcal{H}\rangle$ refers to the induced subgraph of $\flip$ on all vertices of $\flip$ which are in $\mathcal{G}$ or $\mathcal{H}$.

\begin{lemma}\label{lemma: expansion of weakly rigid set}
Suppose $S$ is non-simple. Let $t\in \mathbb{N}$ and let $U$ and $V$ be vertices of $\flip$ such that 
\[d_{\flip}(U,V)  =  t+1\geq 2.\]
Then there exists a finite subgraph $\mathcal{B}\subseteq \flip$ containing $V$ such that for any injective homomorphisms $\lambda, \phi : \langle B_{\flip}(U;t), \mathcal{B}\rangle \rightarrow \flipp$, if $\lambda|_{B_{\flip}(U;t)}  =  \phi|_{B_{\flip}(U;t)}$, then $\lambda   =   \phi$. 
\end{lemma}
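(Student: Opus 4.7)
The plan is to follow the three-case argument of Section~\ref{section: proof of main theorem if d(S) = d(S')} but, in the difficult case, to replace the degree-counting step (which relied on Theorem~\ref{theorem: aramayona-koberda-parlier} and hence on $d(S) = d(S')$ together with non-exceptionality) by an explicit construction that uses Lemma~\ref{lemma: nonsimple surfaces are complex enough} to supply extra 4- and 5-cycles. Fix a shortest path from $U$ to $V$ in $\flip$ and denote its last two vertices before $V = T_a$ by $T$ and $T_b$, where $a$ and $b$ are the arcs of $T$ that are flipped to reach $V$ and $T_b$, respectively. Since the path has length $t+1$, every vertex of the path (including $T$ and $T_b$) lies in $B_{\flip}(U;t)$, so $\lambda$ and $\phi$ already agree on them. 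We then split into cases according to the number of common triangles that $a$ and $b$ border in $T$, exactly as in Section~\ref{section: proof of main theorem if d(S) = d(S')}.

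In Case~1, where $a$ and $b$ border no common triangle, Lemma~\ref{lemma: classification of paths of length 2}(i) supplies a 4-cycle $T - T_b - T_{ab} - T_a$, and because $T_{ab}$ is adjacent to $T_b \in B_{\flip}(U;t-1)$ it lies in $B_{\flip}(U;t)$. Setting $\mathcal{B} = \{V\}$, the three consecutive known vertices $T_{ab}, T_b, T$ force agreement on $V$ by Corollary~\ref{corollary: rigid 4-cycles and 5-cycles}. In Case~2, where $a$ and $b$ border exactly one common triangle, I set $\mathcal{B} = \{V, T_{ab}\}$ and use the 5-cycle from Lemma~\ref{lemma: classification of paths of length 2}(ii); again $T_{ba}$ is adjacent to $T_b$ so lies in $B_{\flip}(U;t)$, and the three consecutive known vertices $T_{ba}, T_b, T$ force agreement on both $T_{ab}$ and $V$.

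Case~3, where $a$ and $b$ border two common triangles in $T$, is the main difficulty since Lemma~\ref{lemma: classification of paths of length 2}(iii) rules out any short cycle through $T, T_a, T_b$. The key observation, as in Section~\ref{section: proof of main theorem if d(S) = d(S')}, is that every arc $e$ in $T$ distinct from $a$ and $b$ shares at most one common triangle with $b$: the only triangles containing $b$ are $(a,b,c)$ and $(a,b,d)$, whose other sides lie in $\{a,c,d\}$. Hence the argument of Cases~1 and 2, applied to the pair $(e,b)$ in place of $(a,b)$, pins down $T_e$ for every such $e$; adding each $T_e$ to $\mathcal{B}$ gives agreement on every neighbor of $T$ in $\flip$ except possibly $T_a$. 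Lemma~\ref{lemma: nonsimple surfaces are complex enough} then supplies an arc $e$ in $T$, distinct from $a,b,c,d$, flippable, and with the property that its two bordering triangles contain neither $a$ nor $b$; in particular $e$ shares no common triangle with $a$, so Lemma~\ref{lemma: classification of paths of length 2}(i) provides the 4-cycle $T - T_a - T_{ae} - T_e$. Including $T_{ae}$ in $\mathcal{B}$, Corollary~\ref{corollary: rigid 4-cycles and 5-cycles} will force agreement on $T_a$ as soon as agreement on $T_{ae}$ has been established, since $T$ and $T_e$ are already known.

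The hard part is therefore to pin down $T_{ae}$, and this is where the bulk of the work will go. The strategy is to repeat the preceding argument one level deeper, at $T_e$ in place of $T$: the triangles $(a,b,c)$ and $(a,b,d)$ do not contain $e$ and hence survive the flip of $e$, so $a$ and $b$ still share two common triangles in $T_e$, and the analogous Case~3 analysis applies with $T_{be}$ (which, being adjacent to $T_b \in B_{\flip}(U;t-1)$, lies in $B_{\flip}(U;t)$) playing the role of $T_b$. Running the neighbor-pinning cascade inside $T_e$ and then applying Lemma~\ref{lemma: nonsimple surfaces are complex enough} once more at $T_e$ produces an auxiliary 4-cycle through $T_{ae}$ whose three other vertices have already been pinned down at an earlier stage of the construction, which by Corollary~\ref{corollary: rigid 4-cycles and 5-cycles} forces agreement on $T_{ae}$. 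Every auxiliary vertex introduced lies within a bounded combinatorial distance of $T$ in $\flip$ (it is obtained from $T$ by a uniformly bounded number of flips along arcs already present in $T$ or arising from Lemma~\ref{lemma: nonsimple surfaces are complex enough}), so only finitely many such vertices are ever added and $\mathcal{B}$ remains finite. Verifying that this second cascade does indeed close up on $T_{ae}$, rather than running on indefinitely, is the technical heart of the proof and the step I expect to demand the most care.
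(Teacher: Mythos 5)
Your Cases 1 and 2 are exactly the paper's, and your setup for Case 3 (invoking Lemma \ref{lemma: nonsimple surfaces are complex enough} to obtain a flippable arc $e$, distinct from $a,b,c,d$, sharing no common triangle with $a$ or $b$, and forming the 4-cycle $T, T_a, T_{ae}, T_e$) is also the paper's. The gap is in how you finish Case 3. You propose to pin down $T_{ae}$ \emph{first} and then recover agreement on $T_a$ from the 4-cycle, and you propose to pin down $T_{ae}$ by running the same Case-3 machinery one level deeper at $T_e$. But $a$ and $b$ still border two common triangles in $T_e$ (the triangles $(a,b,c)$ and $(a,b,d)$ survive the flip of $e$), so at $T_e$ you are again in Case 3, and the recursion produces a fresh unknown of the same difficulty (a vertex of the form $T_{aee'}$) that must be pinned down before $T_{ae}$. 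You flag this yourself as the ``technical heart,'' but nothing in your sketch shows the cascade closes up; as written it does not terminate.

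The paper resolves Case 3 by a non-recursive argument at the level of arcs rather than vertices. After $T_c$ and $T_e$ are pinned down via cycles through $T_b$ (as you also do), let $b', c', e'$ be the arcs along which $\lambda(T) = \phi(T)$ is flipped to reach $\lambda(T_b), \lambda(T_c), \lambda(T_e)$, and let $a'$ and $a^*$ be the arcs flipped along to reach $\lambda(T_a)$ and $\phi(T_a)$, respectively. Reading Lemma \ref{lemma: classification of paths of length 2} in reverse on the 4- and 5-cycles in the image shows that both $a'$ and $a^*$ must border a common triangle with $c'$ and no common triangle with $e'$; a short case check on the arrangements of $b', c', e'$ (Figure \ref{figure: arc arrangements for expansion proof}) shows there is exactly one arc in $\lambda(T)$ with these two properties. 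Hence $a' = a^*$, which gives $\lambda(T_a) = \phi(T_a)$ directly, and Corollary \ref{corollary: rigid 4-cycles and 5-cycles} then fills in the remaining vertices of $\mathcal{B}$, including $T_{ae}$. The point you are missing is that you do not need to identify the far vertex $T_{ae}$ before $T_a$; the images of $b$, $c$, $e$ already determine the arc to be flipped. Replacing your cascade with this direct arc-arrangement argument repairs the proof.
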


The first portion of the proof may remind the reader of parts of the proof in Section 4. Specifically, the case analysis, as well as the proofs of Case 1 and Case 2, are nearly identical. However, the latter portion of the proof is distinct, avoiding the assumptions $d(S)=d(S')$ and $S$ is non-exceptional, and instead appealing to Lemma \ref{lemma: nonsimple surfaces are complex enough}.

\begin{proof}[Proof of Lemma \ref{lemma: expansion of weakly rigid set}.]
Since $d_{\flip}(U,V)  =  t+1\geq 2$, there exists a path $\gamma$ of length $t+1$ from $U$ to $V$. Let $T$ be the vertex in $\gamma$ such that $d_{\flip}(U,T)  =  t$. Then there is an arc $a$ in $T$ such that $V  =  T_a$. We will henceforth refer to $V$ as $T_a$. Further, let $T_b$ refer to the vertex in $\gamma$ such that $d_{\flip}(U,T_b)  =  t-1$ and $b$ be the arc in $T$ which is flipped along to obtain $T_b$. See Figure \ref{figure: figure for attachment proof combined}(i). Then we consider three cases based on the relative positions of $a$ and $b$ in $T$.

Case 1: The arcs $a$ and $b$ do not border any common triangles in $T$. Lemma \ref{lemma: classification of paths of length 2} tells us that $T_{ab}  =  T_{ba}$; call this triangulation $W$. Then $T$, $T_a$, $T_b$, and $W$ form a 4-cycle. We define $\mathcal{B}  =  \{T_a\}$. See Figure \ref{figure: figure for attachment proof combined}(ii). If no injective homomorphism $\lambda: \langle B_{\flip}(U;t), \mathcal{B} \rangle \rightarrow \flipp$ exist, then the result holds trivially. Otherwise fix two injective homomorphisms $\lambda, \phi : \langle B_{\flip}(U;t), \mathcal{B}\rangle \rightarrow \flipp$ such that $\lambda_{B_{\flip}(U;t)}  =  \phi|_{B_{\flip}(U;t)}$.  Since $T$, $T_b$, and $W$ are in $B_{\flip}(U;t)$, we see that $\lambda$ and $\phi$ agree on these vertices, and then Corollary \ref{corollary: rigid 4-cycles and 5-cycles} implies that they also agree on $T_a$.

Case 2: The arcs $a$ and $b$ border exactly one common triangle in $T$. Lemma \ref{lemma: classification of paths of length 2} tells us that $T_{ab}$ and $T_{ba}$ are connected by an edge and the five vertices $T$, $T_a$, $T_b$, $T_{ab}$, and $T_{ba}$ form a 5-cycle. See Figure \ref{figure: figure for attachment proof combined}(iii). We define $\mathcal{B}  =  \{T_a$, $T_{ab}\}$. If no injective homomorphism $\lambda: \langle B_{\flip}(U;t), \mathcal{B} \rangle \rightarrow \flipp$ exist, then the result holds trivially. Otherwise fix two injective homomorphisms $\lambda, \phi : \langle B_{\flip}(U;t), \mathcal{B}\rangle \rightarrow \flipp$ such $\lambda|_{B_{\flip}(U;t)}  =  \phi|_{B_{\flip}(U;t)}$. Since $T$, $T_b$, and $T_{ba}$ are in $B_{\flip}(U;t)$, we see that $\lambda$ and $\phi$ agree on these vertices, and then Corollary \ref{corollary: rigid 4-cycles and 5-cycles} implies that they also agree on $T_a$ and $T_{ab}$.

Case 3: The arcs $a$ and $b$ border exactly two common triangles. Then $T$ contains one of the two arrangements shown in Figure \ref{figure: arrangement of arcs sharing two triangles}.

By Lemma \ref{lemma: nonsimple surfaces are complex enough}, we may assume that $c\neq d$ and that one of $c$ and $d$ borders a second non-folded triangle which has at least one side which is not equal to $c$ or $d$ and not contained in $\partial S$. Without loss of generality, assume that $c$ borders a second non-folded triangle with second side $e$ not part of $\partial S$ and not equal to $d$. Note that $e$ does not border a common triangle in $T$ with either $a$ or $b$. 

Then there is a 4-cycle in $\flip$ with vertices $T$, $T_a$, $T_e$, and $T_{ea}$ ($  =  T_{ae}$) and a 4-cycle in $\flip$ with vertices $T$, $T_b$, $T_e$, and $T_{eb}$ ($  =  T_{be}$). Further, in $\flip$, there is a 5-cycle with vertices $T$, $T_c$, $T_a$, $T_{ac}$ and $T_{ca}$, a 5-cycle with vertices $T$, $T_c$, $T_b$, $T_{bc}$ and $T_{cb}$, and a 5-cycle with vertices $T$, $T_c$, $T_e$, $T_{ec}$ and $T_{ce}$. We define $\mathcal{B}$ to be the set containing the vertices of these two 4-cycles and three 5-cycles. See Figure \ref{figure: expansion proof}. 

\begin{figure}[h]
    \labellist
        \pinlabel {$B_{\flip}(U;t)$} at 58 20
        \pinlabel {$U$} at 58 71
        \pinlabel {$T_b$} at 85 87
        \pinlabel {$T$} at 107 102
        \pinlabel {$T_a$} at 132.5 107
        \pinlabel {$T_{bc}$} at 87.5 119 
        \pinlabel {$T_{cb}$} at 101 137.5
        \pinlabel {$T_c$} at 115.5 129
        \pinlabel {$T_{ce}$} at 127 150
        \pinlabel {$T_e$} at 140 80.5
        \pinlabel {$T_{ec}$} at 171 60
        \pinlabel {$T_{ea}$} at 166.5 89 
        \pinlabel {$T_{be}$} at 123 58 
        \pinlabel {$T_{ac}$} at 158.5 115
        \pinlabel {$T_{ca}$} at 142 134
    \endlabellist
    \centering
    \includegraphics{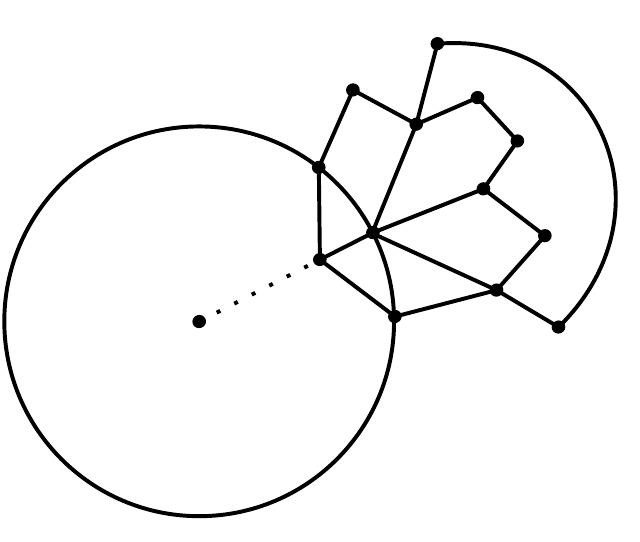}
    \caption{Positions of vertices in $\flip$ from the proof of Lemma \ref{lemma: expansion of weakly rigid set}}
    \label{figure: expansion proof}
\end{figure}

If no injective homomorphism $\lambda: \langle B_{\flip}(U;t), \mathcal{B} \rangle \rightarrow \flipp$ exist, then the result holds trivially. Otherwise fix two injective homomorphisms $\lambda, \phi : \langle B_{\flip}(U;t), \mathcal{B}\rangle \rightarrow \flipp$ such that $\lambda|_{B_{\flip}(U;t)} = \phi|_{B_{\flip}(U;t)}$. Corollary \ref{corollary: rigid 4-cycles and 5-cycles} implies that $\phi$ and $\lambda$ also agree on $T_c$ and $T_e$. Let $b'$ be the arc flipped along from $\lambda(T) = \phi(T)$ to $\lambda(T_b) = \phi(T_b)$, let $c'$ be the arc flipped along from $\lambda(T) = \phi(T)$ to $\lambda(T_c) = \phi(T_c)$, and let $e'$ be the arc flipped along from $\lambda(T) = \phi(T)$ to $\lambda(T_e) = \phi(T_e)$. Let $a'$ be the arc flipped along from $\lambda(T)$ to $\lambda(T_a)$ and $a^*$ the arc flipped along from $\phi(T)$ to $\phi(T_a)$. It suffices to show $a' = a^*$. For this will imply that $\lambda$ and $\phi$ agree on $T_a$ and then further applications of Corollary \ref{corollary: rigid 4-cycles and 5-cycles} will show that they also agree on all of $\mathcal{B}$. 

Applying Lemma \ref{lemma: classification of paths of length 2}, we see that $b'$ and $c'$ border a common triangle, as do $c'$ and $e'$ and that $b'$ and $e'$ do not border a common triangle. See Figure \ref{figure: arc arrangements for expansion proof} for the possible arrangements of $b'$, $c'$, and $e'$ in $S'$. We also see that $a'$ borders a common triangle with $c'$ and not with $e'$. The same is true for $a^*$. Then there is only one possibility for $a'$ and $a^*$, thus they must be equal. By Corollary \ref{corollary: rigid 4-cycles and 5-cycles}, it follows that $\phi$ and $\lambda$ agree on all of $\mathcal{B}$. 
\end{proof}

\begin{figure}[h]
    \labellist
        \pinlabel {$c'$} at 42 49 
        \pinlabel {$c'$} at 154 49 
        \pinlabel {$b'$} at 67 67 
        \pinlabel {$b'$} at 133 67 
        \pinlabel {$e'$} at 64 17 
        \pinlabel {$e'$} at 176 17 
    \endlabellist
    \centering
    \includegraphics{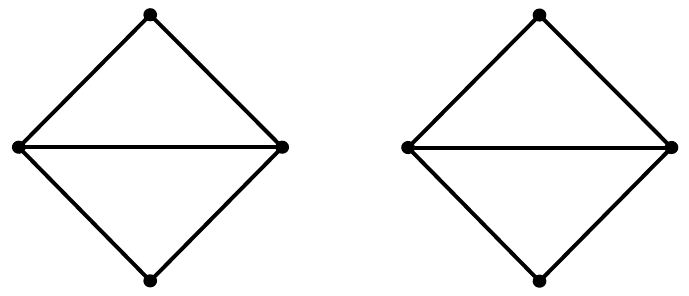}
    \caption{Arc arrangements from the proof of Lemma \ref{lemma: expansion of weakly rigid set}}
    \label{figure: arc arrangements for expansion proof}
\end{figure}

We are now ready to prove Theorem \ref{theorem: main result} for the case when $S$ is non-simple.

\begin{proof}[Proof of Theorem \ref{theorem: main result} if $S$ is non-simple.]
Fix $S$ and $S'$. Fix a vertex $U\in \flip$. If for some $N\in\mathbb{N}$, there are no injective homomorphisms $\lambda:B_{\flip}(U;N)\rightarrow \flipp$, then $\mathcal{X}:= B_{\flip}(U;N)$ is a finite rigid set for $(S,S')$ trivially. Then assume that for any $N\in\mathbb{N}$, an injective homomorphism $\lambda:B_{\flip}(U;N) \rightarrow \flipp$ does exist. Fix $r\geq 1$. Apply Proposition \ref{prop: existence of weakly rigid set} to obtain $R\geq r$ such that for any injective homomorphism $\lambda:B_{\flip}(U;R)\rightarrow \flipp$, there is an injective homomorphism $\phi : \flip \rightarrow \flipp$ such that $\lambda|_{B_{\flip}(U;r)} = \phi|_{B_{\flip}(U;r)}$.

For each $r< j \leq R$, let $V_{1}^j, V_{2}^j, \ldots, V_{k_j}^j$ be the vertices of $\flip$ such that $d_{\flip}(U, V_i^j) = j$. Recall that since $\flip$ has finite valence, there are finitely many such vertices for each $j$. Let $\mathcal{B}_{i}^j$ be the finite subgraph from Lemma \ref{lemma: expansion of weakly rigid set} for $t = j-1$, $U = U$, and $V = V_{i}^j$ for each $r < j \leq R$ and $1 \leq i \leq k_j$. Define
\[\mathcal{X} = \left\langle B_{\flip}(U;R), \bigcup_{r<j\leq R} \bigcup_{1\leq i \leq k_j} \mathcal{B}_{i}^j\right\rangle.\]
Let $\lambda:\mathcal{X}\rightarrow \flipp$ be an injective homomorphism. By Proposition \ref{prop: existence of weakly rigid set}, there exists an injective homomorphism $\phi : \flip \rightarrow \flipp$ such that $\lambda|_{B_{\flip}(U;r)} = \phi|_{B_{\flip}(U;r)}$. For $r< j \leq R$, Lemma \ref{lemma: expansion of weakly rigid set} tells us that if $\lambda$ and $\phi$ agree on $B_{\flip}(U;j-1)$, then they also agree on $\left\langle B_{\flip}(U;j-1), \bigcup_{1\leq i \leq k_{j}} \mathcal{B}_{i}^{j} \right \rangle \supseteq B_{\flip}(U;j)$. By induction, we conclude that $\lambda$ and $\phi$ agree on all of $\mathcal{X}$, the domain of $\lambda$, i.e. $\lambda = \phi|_{\mathcal{X}}$.

Say $\phi' :\flip \rightarrow \flipp$ is another injective homomorphism such that $\lambda = \phi' |_{\mathcal{X}}$. Then in particular, $\phi$ and $\phi'$ agree on $B_{\flip}(U;R)\subseteq \mathcal{X}$. Using induction and the arguments above, we see that $\phi|_{B_{\flip}(U;i)}  =  \phi'|_{B_{\flip}(U;i)}$ for any $i\geq R$, hence $\phi  =  \phi'$. Thus $\phi$ is the unique injective homomorphism which extends $\lambda$. Then $\mathcal{X}$ is a finite rigid set for the pair $(S,S')$. 
\end{proof}

For simple surfaces, we use explicit descriptions of the flip graphs of these surfaces to prove Theorems \ref{theorem: main result} and \ref{theorem: main result add-on}. In the case that $S\cong S_{0,n}$ for $1\leq n \leq 3$, $S_{0,0,(p_1)}$ for $p_1 \leq 1$, or $S_{0,1,(p_1))}$ for $p_1 \leq 1$, we observe that $\flip$ is finite. If $S  \cong  S_{0,0,(1,1)}$, then $\flip$ is a bi-infinite line. If $S  \cong  S_{1,1}$, then $\flip$ is a trivalent tree. For the six remaining simple surfaces, we include an image of a portion of $\flip$ in Appendix \ref{appendix: images of flip graphs}. The images were created by recognizing symmetries of $\flip$ induced by homeomorphisms of $S$ and manually drawing a sufficiently large subgraph of $\flip$ so that the union of the images of this subgraph under the symmetries yields all of $\flip$. 

\begin{proof} [Proof of Theorem \ref{theorem: main result} if $S$ is simple.]

First suppose that $S\cong S_{0,n}$ for $1\leq n \leq 3$, $S_{0,0,(p_1)}$ for $p_1 \leq 1$, or $S_{0,1,(p_1))}$ for $p_1 \leq 1$. Then $\flip$ is finite, we can set $\mathcal{X} = \flip$, and the result trivially follows. Recall that Theorem \ref{theorem: main result} does not apply if $S\cong S_{0,0,(1,1)}$ or $S_{1,1}$.

Now suppose $S$ is one of the six remaining simple surfaces. For each of these surfaces, we appeal to images of $\flip$ in Appendix \ref{appendix: images of flip graphs} to make our argument. We demonstrate the argument for $S_{0,0,(1,2)}$ below. The argument for the other five surfaces is similar. 

The image of $\flip$ for when $S  =  S_{0,0,(1,2)}$ is shown in Figure \ref{figure: flip graph simple case 1} as well as in Appendix \ref{appendix: images of flip graphs}. Fix a vertex $U\in \flip$ which is a member of three distinct 5-cycles, as pictured in the figure. Fix $S'$ and apply Proposition \ref{prop: existence of weakly rigid set} for $X=\flip$, $Y=\flipp$, fixed $r\geq 1$, and $x=U$ to obtain a value for $R$. Then we define $\mathcal{X}$ as the union of $B_{\flip}(U;R)$ together with any edges and vertices forming a 5-cycle with vertices in $B_{\flip}(U;R)$. Observe that $\mathcal{X}$ is finite and connected.

\begin{figure}[h]
    \labellist
        \pinlabel {$\cdots$} at 0 46
        \pinlabel {$\cdots$} at 184 46
        \pinlabel {$1$} at 85 23
        \pinlabel {$3$} at 113 23
        \pinlabel {$4$} at 71 69
        \pinlabel {$2$} at 99 69
        \pinlabel {$5$} at 127 69
        \pinlabel {$U$} at 99.5 46
    \endlabellist
    \centering
    \includegraphics{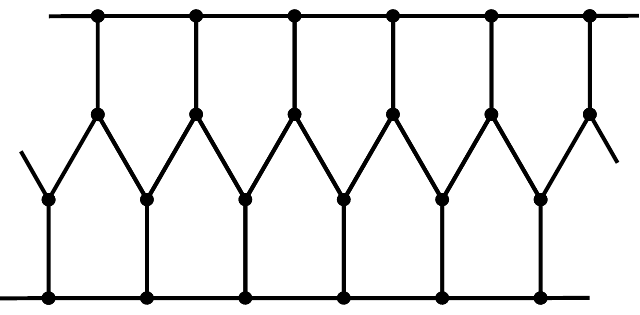}
    \caption{Flip graph of $S_{0,0,(1,2)}$}
    \label{figure: flip graph simple case 1}
\end{figure}

Let $\lambda: \mathcal{X} \rightarrow \flipp$ be an injective homomorphism. From Proposition \ref{prop: existence of weakly rigid set}, there exists an injective homomorphism $\phi : \flip \rightarrow \flipp$ such that $\lambda|_{B_{\flip}(U;r)} = \phi|_{B_{\flip}(U;r)}$. We wish to show that $\lambda = \phi|_{\mathcal{X}}$, and thus that $\mathcal{X}$ is a finite rigid set for the pair $(S,S')$. 

Observe that since $r\geq 1$ that $B_{\flip}(U;r)$ contains three adjacent vertices in each of the 5-cycles labeled 1, 2, and 3 in Figure \ref{figure: flip graph simple case 1}. By assumption, $\lambda$ and $\phi$ agree on these vertices. By Corollary \ref{corollary: rigid 4-cycles and 5-cycles}, it follows that $\lambda$ and $\phi$ agree on all the vertices in these 5-cycles. Then $\lambda$ and $\phi$ agree on three adjacent vertices of the 5-cycles labeled 4 and 5 in Figure \ref{figure: flip graph simple case 1} and we apply Corollary \ref{corollary: rigid 4-cycles and 5-cycles} again. Repeated applications show that in fact, $\lambda$ and $\phi$ agree on all of $\mathcal{X}$. 

If $\phi': \flip \rightarrow \flipp$ is another injective homomorphism which agrees with $\lambda$ on $\mathcal{X}$, then $\phi$ and $\phi'$ agree on $\mathcal{X}$. In the same way as above, we can repeatedly apply Corollary \ref{corollary: rigid 4-cycles and 5-cycles} to show that indeed $\phi=\phi'$, thus there is a unique extension of $\lambda$.

For each remaining simple surface $S$, we similarly observe that if two injective homomorphisms agree on $B_{\flip}(U;1)$ for the vertex marked $U$ on $\flip$ in Appendix \ref{appendix: images of flip graphs}, we can apply Corollary \ref{corollary: rigid 4-cycles and 5-cycles} to find more vertices on which the two maps agree, which gives more vertices upon which to apply the corollary. Using the images in the appendix, we observe that repeated applications of the corollary to these graphs show that the injective homomorphisms agree on arbitrarily large balls, provided the domains of the maps are sufficiently large and that in fact, the maps agree on all vertices required to show that they agree on the ball. Then we apply Proposition \ref{prop: existence of weakly rigid set} with $X=\flip$, $Y=\flipp$, $r\geq 1$, and $x=U$ to find a sufficient $R\geq r$. We define $\mathcal{X}$ with $B_{\flip}(U;R)\subseteq\mathcal{X}\subseteq \flip$ in a way so that by repeatedly applying Corollary \ref{corollary: rigid 4-cycles and 5-cycles}, we are able to show that for any injective homomorphism $\lambda:\mathcal{X}\rightarrow \flipp$, there is an injective homomorphism $\phi: \flip \rightarrow \flipp$ which agrees with $\lambda$ not only on $B_{\flip}(U;r)$, but on all of $\mathcal{X}$. As before, if there is another injective homomorphism $\phi':\flip \rightarrow \flipp$ which agrees with $\lambda$ on $\mathcal{X}$, repeated applications of Corollary \ref{corollary: rigid 4-cycles and 5-cycles} show that $\phi$ and $\phi'$ are the same. We conclude that $\lambda$ has a unique extension, hence $\mathcal{X}$ is a finite rigid set. 
\end{proof}

Finally, we prove Theorem \ref{theorem: main result add-on}.

\begin{proof}[Proof of Theorem \ref{theorem: main result add-on}]
We assume that $S \cong S_{0,0,(1,1)}$. Recall that $\flip$ is a bi-infinite line. First suppose $S'$ is a surface such that $\flipp$ is finite. Say $\flipp$ has $k$ vertices. Choose a finite subgraph $\mathcal{X}$ of $\flip$ with at least $k+1$ vertices. Then $\mathcal{X}$ is a finite rigid set for the pair $(S,S')$ trivially, since there are no injective homomorphisms $\mathcal{X}\rightarrow \flipp$. 

Now, suppose $S' \cong S \cong  S_{0,0,(1,1)}$. Let $\mathcal{X}$ be any finite connected segment of $\flip$ of length at least 1. Observe that $\mathcal{X}$ is a finite rigid set for the pair $(S,S')$. 

Finally, suppose $\flipp$ is infinite and $S' \not \cong S_{0,0,(1,1)}$. We will show that no finite rigid set exists for $(S,S')$. Let $\mathcal{Y}$ be a finite subgraph of $\flip$. Suppose $\mathcal{Y}$ is not connected and has two connected components of distance $d$ in $\flip$. Then it is easy to construct an injective homomorphism $\lambda:\mathcal{Y}\rightarrow \flipp$ where the images of these connected components have distance greater than $d$. Thus $\lambda$ cannot be extended to an injective homomorphism $\flip \rightarrow \flipp$. 

Now suppose $\mathcal{Y}$ is connected. Then $\mathcal{Y}$ is a line segment of some finite length $l$. Suppose $S'$ is simple, in addition to the already present assumptions about $S'$. Then either $\flipp$ is a trivalent tree (if $S' \cong S_{1,1}$) or $\flipp$ is pictured in Appendix \ref{appendix: images of flip graphs}. In each case, we can construct an injective homomorphism of $\mathcal{Y}$ into $\flipp$ which admits multiple distinct extensions to all of $\flip$.

Now instead, suppose $S'$ is non-simple. By inspection, we see that any non-simple $S'$ admits a triangulation $T$ with arcs in the second arrangement in Figure \ref{figure: arrangement of arcs sharing two triangles}. Let $a$, $b$, $c$, and $d$ refer to the arcs in $T$ corresponding to the similarly labeled arcs in the figure. By Lemma 5.1, there is an arc $e$ in $T$ which is not equal to $a$, $b$, $c$, or $d$ and can be flipped along. We can generate a bi-infinite line $\gamma$ in $\flip$ in which every triangulation contains $T\backslash \{a,b\}$. We can also generate a bi-infinite line $\gamma'$ in which every triangulation contains $T_e \backslash \{a,b\}$. These two lines are disjoint and every point on each line is connected to a point on the other line, so that $\gamma$, $\gamma'$ and the edges connecting them form a shape resembling a bi-infinite ladder. See Figure \ref{figure: bi-infinite ladder}. Then there exists an injective homomorphism $\lambda: \mathcal{Y} \rightarrow \gamma \subseteq \flipp$ which has distinct extensions $\phi,\phi' : \flip \rightarrow \flipp$ where $\phi(\flip)\subseteq \gamma$ and $\phi'$ maps part of $\flip\backslash \mathcal{Y}$ onto $\gamma'$. Thus, since extensions of injective homomorphisms of $\mathcal{Y}$ into $\flipp$ are not always unique, $\mathcal{Y}$ is not a finite rigid set for $(S,S')$. Then indeed, no finite rigid set exists for $(S,S')$.
\end{proof}

\begin{figure}[h]
    \labellist
        \pinlabel {$T$} at 155 70
        \pinlabel {$T_e$} at 155 12
        \pinlabel {$T_a$} at 193 69
        \pinlabel {$T_{ea}$} at 193 12
        \pinlabel {$T_{ab}$} at 232 69
        \pinlabel {$T_{eab}$} at 232 12
        \pinlabel {$T_b$} at 117 69
        \pinlabel {$T_{eb}$} at 117 12
        \pinlabel {$T_{ba}$} at 79 69
        \pinlabel {$T_{eba}$} at 79 12
        \pinlabel {$\gamma$} at 29 66
        \pinlabel {$\gamma '$} at 30 16
        \pinlabel {$\cdots$} at 7 41
        \pinlabel {$\cdots$} at 300 41
    \endlabellist
    \centering
    \includegraphics{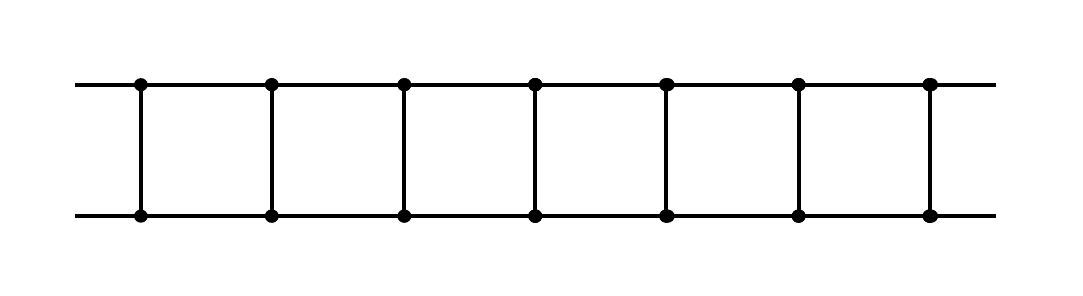}
    \caption{Bi-infinite ladder structure in the flip graph}
    \label{figure: bi-infinite ladder}
\end{figure}

\section{Proofs of Corollaries \ref{corollary: applying a-k-p theorem} and \ref{corollary: exhaustion}}\label{section: proof of corollaries}

In all cases in the previous section where we proved the existence of a finite rigid set for the pair $(S,S')$, the set could be chosen to be arbitrarily large if $\flip$ was infinite or all of to be $\flip$ if $\flip$ was finite. Thus, Corollary \ref{corollary: exhaustion} follows immediately.

Theorems \ref{theorem: main result} and \ref{theorem: aramayona-koberda-parlier} nearly gives us Corollary \ref{corollary: applying a-k-p theorem}, but it remains to show the following. 

\begin{lemma} \label{lemma: akp uniqueness}
In Theorem \ref{theorem: aramayona-koberda-parlier}, the embedding $h$ is unique up to isotopy.
\end{lemma}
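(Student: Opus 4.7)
The plan is to suppose two embeddings $h, h' : S \to S'$ both induce the same injective homomorphism $\phi : \flip \to \flipp$ and deduce they are isotopic. Writing the induced homomorphism as $\phi(T) = h(T) \cup Q = h'(T) \cup Q'$ for every triangulation $T$ of $S$, where $Q$ and $Q'$ are the auxiliary triangulations of $S' \setminus \mathrm{int}(h(S))$ and $S' \setminus \mathrm{int}(h'(S))$ from the construction recalled in Section \ref{section: basics}, I will (i) pin down $Q = Q'$ intrinsically from $\phi$, (ii) use this to identify $h$ and $h'$ on every isotopy class of arc of $S$, and (iii) promote this arc-level agreement to an actual isotopy of embeddings via a standard Alexander-type argument.

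For step (i), the claim is that $Q = \bigcap_{T \in \flip} \phi(T)$, and likewise $Q' = \bigcap_{T \in \flip} \phi(T)$, so $Q = Q'$. The inclusion $Q \subseteq \bigcap_T \phi(T)$ is immediate since $Q$ is common to every $\phi(T)$ by construction. For the reverse inclusion I must verify that $\bigcap_T h(T) = \emptyset$, i.e., no isotopy class of arc in $S$ lies in every triangulation of $S$. Since $S$ is non-exceptional, any arc $\alpha$ of $S$ can be extended to a triangulation in which it is flippable, and then flipping produces a triangulation of $S$ omitting $\alpha$. Hence no arc sits in every triangulation, $\bigcap_T h(T) = \emptyset$, and the same argument applied to $h'$ completes step (i).

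For step (ii), the equality $Q = Q'$ immediately gives $h(T) = \phi(T)\setminus Q = h'(T)$ as sets of arcs in $S'$ for every triangulation $T$. For any flippable arc $a \in T$, comparison with the neighboring vertex $T_a$ then yields
\[
\{h(a)\} \;=\; h(T)\setminus h(T_a) \;=\; h'(T)\setminus h'(T_a) \;=\; \{h'(a)\},
\]
so $h(a) = h'(a)$ as isotopy classes in $S'$. Every arc of $S$ appears flippably in some triangulation (again using non-exceptionality), so $h$ and $h'$ induce the same map on the set of isotopy classes of arcs of $S$.

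Finally, for step (iii), fix a triangulation $T$ of $S$ and apply an ambient isotopy of $S'$ to arrange that $h$ and $h'$ coincide pointwise on the 1-skeleton $h(T) = h'(T)$ together with $\partial h(S) = \partial h'(S)$. The image surfaces $h(S)$ and $h'(S)$ agree, since each is the union of the complementary triangles of $\phi(T) \setminus Q$ in $S'$. On each triangle of $S$, the maps $h$ and $h'$ then become homeomorphisms onto a common triangle of $S'$ agreeing on its boundary, so they are isotopic rel boundary by the Alexander trick; patching yields $h \simeq h'$. The main obstacle is step (i): were $\bigcap_T h(T)$ nonempty, the filler $Q$ could not be recovered intrinsically from $\phi$, and this is precisely the place where the non-exceptionality hypothesis of Theorem \ref{theorem: aramayona-koberda-parlier} enters.
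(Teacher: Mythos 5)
Your high-level plan matches the paper's: show that $h$ and $h'$ induce the same map on isotopy classes of arcs, then upgrade to an isotopy of embeddings. Your flip argument in step (ii) -- comparing $\phi(T)$ with $\phi(T_a)$ to extract $h(a)=h'(a)$ for flippable $a$ -- is exactly the paper's first observation. Your step (i), recovering $Q$ intrinsically as $\bigcap_T \phi(T)$, is a nice remark but is not actually needed; the paper simply uses the identity $h(T)\cup Q = h'(T)\cup Q'$ directly, and for step (i) itself the weaker fact that every arc meets some other arc (hence is omitted from some triangulation) would already suffice.

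The genuine gap is the repeated, unjustified appeal to the claim that \emph{every arc of $S$ appears flippably in some triangulation}. This is load-bearing: it is how you pass from flippable arcs to all arcs in step (ii), and step (iii) then requires $h$ and $h'$ to agree on every side of every triangle of your chosen $T$. The claim is not obvious. An arc $a$ with distinct endpoints $p\neq q$ is non-flippable in $T$ exactly when $T$ also contains one of the two loops $\beta_p,\beta_q$ cutting off a once-marked disk around $a$ and one endpoint; showing that some triangulation through $a$ avoids both requires producing arcs disjoint from $a$ that cross those loops, which is real work and is precisely where non-exceptionality must be exploited. The paper sidesteps this entirely: it fixes a single triangulation $U$, establishes $h(a)=h'(a)$ for the flippable arcs of $U$, and then propagates triangle-by-triangle -- a non-folded triangle with three flippable sides is pinned down directly, while a folded triangle or one with sides on $\partial S$ is pinned down via an \emph{adjacent} triangle whose structure is already known, with non-exceptionality guaranteeing such a neighbor exists. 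To repair your argument you would need either a proof of the flippability claim, or (closer to the paper) to replace the global arc-level statement with a local propagation argument across the triangles of one triangulation.
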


\begin{proof} Fix non-exceptional $S$ and fix $S'$. If there is no injective homomorphism $\phi: \flip \rightarrow \flipp$, the result is trivial, so assume an injective homomorphism does exist. Suppose $h,h':S\rightarrow S'$ are two embeddings which induce $\phi$. Let $Q$ and $Q'$ be the collections of arcs on $S'$ such that $h(T) \cup Q = \phi(T) = h'(T)\cup Q'$ for any triangulation $T$ of $S$. 

Fix a triangulation $U$ of $S$. If $a\in U$ is a flippable arc, then $h(U)\cup Q = h'(U)\cup Q'$ and $h(U_a)\cup Q = h'(U_a)\cup Q'$. From this, we can conclude that $h$ and $h'$ map $a$ to the same arc on $S'$. 

Fix a triangle $\Delta$ in $U$. Suppose $\Delta$ is non-folded and all three of its sides are flippable arcs. Then $h$ and $h'$ agree on which arc they map each side to. $S'$ cannot be isotopic to $S_{0,3}$ or $S_{1,1}$ because $S$ is non-exceptional and embeds into $S$. Then there is only one triangle on $S'$ bordered by the images of these three sides. It follows that, up to isotopy, $h$ and $h'$ agree on $\Delta$.

Now suppose $\Delta$ is non-folded and exactly two of its sides are flippable arcs, hence the third side is part of $\partial S$. Call the flippable sides $a$ and $b$. Then $h$ and $h'$ agree on which arcs they map $a$ and $b$ to. Call these arcs $a'$ and $b'$, respectively. If $a'$ and $b'$ border only one common triangle in $h(U)\cup Q = h'(U)\cup Q'$, then $h$ and $h'$ must both map $\Delta$ to it, and in such a way so that $h$ and $h'$ are isotopic on $\Delta$. Now suppose $a'$ and $b'$ border two common triangles. Since $a$ and $b$ are flippable in $U$, each border a second triangle in $U$, and these triangles must be the same, since $h$ and $h'$ are embeddings. Call this second triangle $\Delta'$. Since $S$ is not isotopic to $S_{0,1,(2))}$, the third side of $\Delta'$ is a flippable arc. By our arguments above, we conclude that up to isotopy, $h$ and $h'$ agree on $\Delta'$ and thus on $\Delta$ as well.

Now suppose $\Delta$ is non-folded and exactly one of its sides is a flippable arc, hence its other two sides are part of $\partial S$. Call the flippable side $a$. As before, $h$ and $h'$ send $a$ to the same arc on $S'$. Call this arc $a'$. Since $a$ is a flippable arc, there is another non-folded triangle $\Delta'$ in $U$ with $a$ as a side. Further, since $S$ is not isotopic to $S_{0,0,(4)}$, a second side of $\Delta'$ is a flippable arc. By our arguments above, we conclude that up to isotopy, $h$ and $h'$ agree on $\Delta'$. Since $a'$ can border at most two triangles, the maps $h$ and $h'$ 
agree on $\Delta$, up to isotopy, as well. 

Finally, suppose $\Delta$ is a folded triangle. Call its outer arc $a$. Since $S$ is not isotopic to $S_{0,1,(1))}$, $a$ is a flippable arc. Thus, there is another triangle $\Delta'$ in $U$ with $a$ as a side. Since $S$ is not isotopic to $S_{0,3}$, $\Delta'$ is a non-folded triangle. By our arguments above, we conclude that up to isotopy, $h$ and $h'$ agree on $\Delta'$, hence they agree on $\Delta$, as well.

We conclude that $h$ and $h'$ are isotopic. 
\end{proof}

\section{Regarding the Remaining Case} \label{section: remaining case}

The flip graph of $S \cong S_{1,1}$ is a trivalent tree and all of our above methods above fail in this case. We discuss in this section some partial results.

For a surface $S'$, if balls in $\flipp$ grow more slowly than balls in $\flip$, we can use cardinality arguments to determine whether a finite rigid set for $(S,S')$ exists.

\begin{lemma}\label{lemma: remaining case works if s' balls grow slowly}
Let $S \cong S_{1,1}$. Suppose $S'$ is a surface such that there exists $n\in \N$, such that for all vertices $V$ in $\flipp$, the closed ball $B_{\flipp}(V;n)$ has fewer than $3\cdot 2^n -2$ vertices. Then for any vertex $U$ in $\flip$, the set $B_\flip(U;n)$ is a finite rigid set for the pair $(S,S')$. 
\end{lemma}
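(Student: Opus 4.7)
The plan is to prove this by a simple cardinality/pigeonhole argument, showing that under the hypothesis no injective homomorphism $B_{\flip}(U;n) \rightarrow \flipp$ exists at all, so $B_{\flip}(U;n)$ is a finite rigid set vacuously.

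The first step is to compute $|B_{\flip}(U;n)|$ exactly. Since $S \cong S_{1,1}$, the flip graph $\flip$ is a trivalent (3-regular) tree. In such a tree, the center $U$ has $3$ neighbors at distance $1$, each of which contributes $2$ new vertices at distance $2$, and inductively there are $3 \cdot 2^{k-1}$ vertices at distance exactly $k$ for each $k \geq 1$. Summing gives
\[
|B_{\flip}(U;n)| = 1 + \sum_{k=1}^{n} 3 \cdot 2^{k-1} = 1 + 3(2^n - 1) = 3 \cdot 2^n - 2.
\]

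The second step is the standard observation that any graph homomorphism with connected domain is $1$-Lipschitz with respect to the path metrics. Hence if $\lambda : B_{\flip}(U;n) \rightarrow \flipp$ is any homomorphism, then $\lambda(B_{\flip}(U;n)) \subseteq B_{\flipp}(\lambda(U);n)$. By the hypothesis on $S'$, the right-hand ball has fewer than $3 \cdot 2^n - 2$ vertices, which is strictly less than $|B_{\flip}(U;n)|$. Therefore $\lambda$ cannot be injective, and no injective homomorphism $B_{\flip}(U;n) \rightarrow \flipp$ exists.

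Finally, when no injective homomorphism from $\mathcal{X}$ into $\flipp$ exists, the rigidity condition is satisfied vacuously, so $\mathcal{X} = B_{\flip}(U;n)$ is a finite rigid set for the pair $(S,S')$. There is no real obstacle here; the only thing to verify carefully is the count of vertices in the ball of a trivalent tree, which matches the threshold $3 \cdot 2^n - 2$ appearing in the hypothesis exactly, making the pigeonhole argument go through cleanly.
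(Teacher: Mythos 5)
Your proof is correct and follows essentially the same argument as the paper: count the vertices in a radius-$n$ ball of the trivalent tree ($3\cdot 2^n - 2$), note that any homomorphism maps it into a ball of radius $n$ in $\flipp$, and conclude by the cardinality hypothesis that no injective homomorphism exists, making the rigidity claim vacuous. The paper is merely terser, leaving the $1$-Lipschitz step implicit.
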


\begin{proof}
Fix $n\in \N$ such that for all vertices $V$ in $\flipp$, the closed ball $B_{\flipp}(V;n)$ has fewer than $3\cdot 2^n -2$ vertices. Fix a vertex $U$ in $\flip$ and define $\mathcal{X} = B_{\flip}(U;n)$. Since $\flip$ is a trivalent tree, the number of vertices in $\mathcal{X}$ is $3\cdot 2^n-2$. Then there are no injective homomorphisms $\mathcal{X} \rightarrow \flipp$, hence $\mathcal{X}$ is a finite rigid set trivially.
\end{proof}

We can apply the proceeding lemma to a class of surfaces $S'$ where $\text{Mod}(S')$ has sub-exponential growth. 

\begin{prop}\label{prop: remaining case subexponential growth}
Let $S \cong S_{1,1}$ and let $S'$ be a sphere with $b + n \leq 3$. Then there is a finite rigid set for the pair $(S,S')$. 
\end{prop}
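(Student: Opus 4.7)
The plan is to apply Lemma \ref{lemma: remaining case works if s' balls grow slowly}: it suffices to exhibit an $n \in \N$ such that every closed ball $B_{\flipp}(V;n)$ has strictly fewer than $3\cdot 2^n - 2$ vertices. Since the threshold $3\cdot 2^n - 2$ grows exponentially in $n$, any sub-exponential upper bound on the growth function of $\flipp$ will do, and polynomial growth is more than enough.

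First I would exploit the action of $\text{Mod}^{\pm}(S')$ on $\flipp$ by simplicial automorphisms. This action has only finitely many vertex orbits (up to homeomorphism there are only finitely many combinatorial types of triangulation of $S'$) and finite vertex stabilizers (the stabilizer of a triangulation embeds into the permutation group of its finitely many arcs). Fixing a finite generating set $\Gamma$ of $\text{Mod}^{\pm}(S')$ and a set $\Omega$ of orbit representatives, set
\[ D \;=\; \max_{\gamma \in \Gamma,\, V \in \Omega} d_{\flipp}(V, \gamma \cdot V). \]
A standard cocompactness argument then yields, for any basepoint $V_0$, a bound of the form
\[ |B_{\flipp}(V_0;n)| \;\leq\; k \cdot \bigl|B_{\text{Mod}^{\pm}(S')}\bigl(e;\, \lceil n/D \rceil + C\bigr)\bigr|, \]
where $k = |\Omega|$, $C$ depends only on the diameters of finitely many fundamental domains, and the right-hand side is measured in the word metric on $\text{Mod}^{\pm}(S')$ determined by $\Gamma$.

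Next I would verify that for each sphere $S'$ with $b+n \leq 3$, the group $\text{Mod}^{\pm}(S')$ has polynomial growth. The surfaces in question form a short list parametrized by the $p_i$: closed spheres $S_{0,n}$ with $n \leq 3$; disks $S_{0,n,(p_1)}$ with $n \leq 2$; annuli $S_{0,n,(p_1,p_2)}$ with $n \leq 1$; and pairs of pants $S_{0,0,(p_1,p_2,p_3)}$. In each case $\text{Mod}^{\pm}(S')$ is virtually abelian, generated (up to finite index) by the pairwise-commuting Dehn twists along the at most three boundary components, together with a finite group of symmetries permuting marked points; this is a standard Birman-exact-sequence calculation in the presence of so little topology. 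Virtually abelian groups have polynomial growth in any finite generating set.

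Combining these observations gives a polynomial bound on $|B_{\flipp}(V_0;n)|$ that is uniform in $V_0$ (by the finiteness of $\Omega$), which is eventually strictly smaller than $3\cdot 2^n - 2$. For any such $n$, Lemma \ref{lemma: remaining case works if s' balls grow slowly} produces the desired finite rigid set $B_{\flip}(U;n)$. The main obstacle I expect is converting the qualitative cocompactness/quasi-isometry statement into a clean quantitative ball-size estimate that is uniform over basepoints and across the several surface types; this is technically standard once the virtually abelian description of $\text{Mod}^{\pm}(S')$ is in hand, but one must track the constants $D$ and $C$ carefully as the surface, and hence the generating set, varies.
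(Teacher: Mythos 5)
Your proposal takes essentially the same route as the paper. The paper's proof cites the quasi-isometry between $\flipp$ and $\text{Mod}(S')$ (Disarlo--Parlier, \enquote{The geometry of flip graphs and mapping class groups}, Lemma 2.3) rather than rederiving it from a \v{S}varc--Milnor/cocompactness argument as you do, and it asserts sub-exponential growth of $\text{Mod}(S')$ for these surfaces \enquote{by inspection} rather than spelling out the virtually-abelian structure. It then uses the finiteness of $\text{Aut}(\flipp)$-orbits of vertices --- exactly as you do --- to make the ball-size bound uniform over basepoints, and concludes by applying Lemma~\ref{lemma: remaining case works if s' balls grow slowly}. Your version fills in the two cited/asserted steps explicitly but does not change the structure of the argument; the only caveat is that your blanket claim that $\text{Mod}^{\pm}(S')$ is \emph{virtually abelian} in every case on the list should be checked surface by surface (for instance, for a disk with two interior marked points the group is virtually cyclic via the half-twist, and for a pair of pants with only boundary marked points the group is actually finite), but in all cases the growth is at most polynomial, which is all that is needed.
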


\begin{proof}
The flip graph $\flipp$ is quasi-isometric to $\text{Mod}(S')$ (e.g. see Lemma 2.3 in \cite{disarlo2019geometry}). By inspection, we can see that if $S'$ is a sphere with $b+n \leq 3$, then $\text{Mod}(S')$ has sub-exponential growth, hence $\flipp$ does as well. Recall that there are finitely many $
\text{Aut}(\flipp)$-orbits of vertices in $\flipp$, so to find the maximum size of a ball of radius $n$ in $\flipp$, we only need to consider the size of a ball of radius $n$ of a member of each of the finitely many orbits. Then since $\flipp$ has sub-exponential growth, we conclude that there exists $n\in \N$, such that for all vertices $V$ in $\flipp$, the closed ball $B_{\flipp}(V;n)$ has fewer than $3\cdot 2^n -2$ vertices. Then we may apply Lemma \ref{lemma: remaining case works if s' balls grow slowly} to get the result.
\end{proof}

Unfortunately for us, if $S'$ is such that $g\geq 1$ or $b+n \geq 4$, then $\text{Mod}(S')$ has exponential growth. Indeed, in this case, $S'$ admits two essential simple closed curves with intersection at least one and squares of Dehn twists around these curves generate a rank 2 free group within $\text{Mod}(S')$. Thus, we cannot use an argument like the one for Proposition \ref{prop: remaining case subexponential growth} in this case. Further, among flip graphs with exponential growth rate, it is not known when this growth rate is less than that of $2^n$, so Lemma \ref{lemma: remaining case works if s' balls grow slowly} is not presently helpful either. In \cite{rafi2016uniform}, Rafi and Tao proved a uniform upper bound on the growth rate of flip graphs, but it is greater than $2^n$. We do not know at this time if the bound could be improved for some or all of the surfaces with exponential flip graph growth. 

On the other hand, however, if we can determine that $\flipp$ contains a trivalent tree, we can conclude the following:

\begin{lemma}\label{lemma: remaining case containing trivalent tree means no finite rigid sets}
Let $S \cong S_{1,1}$ and let $S'$ be such that $\flipp$ contains a trivalent tree. Then there are no finite rigid sets for the pair $(S,S')$.
\end{lemma}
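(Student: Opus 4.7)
The plan is to show that for every finite subgraph $\mathcal{Y} \subseteq \flip$, there is an injective homomorphism $\lambda : \mathcal{Y} \to \flipp$ that either fails to extend to $\flip$ or has at least two distinct extensions to injective homomorphisms $\flip \to \flipp$. Since $\flip$ is the infinite $3$-regular tree and $\flipp$ contains a trivalent subtree $\mathcal{T}$, I would fix an isomorphism $\iota : \flip \to \mathcal{T}$, viewed as an injective homomorphism $\flip \to \flipp$. The argument splits into two cases depending on whether $\mathcal{Y}$ is connected.

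If $\mathcal{Y}$ is disconnected, I would adapt the disconnected-component argument from the proof of Theorem \ref{theorem: main result add-on}: pick two distinct connected components $\mathcal{Y}_1, \mathcal{Y}_2$ of $\mathcal{Y}$ and set $d = d_{\flip}(\mathcal{Y}_1, \mathcal{Y}_2)$. Each $\mathcal{Y}_i$ is a finite subtree of $\flip$, hence embeds as a connected subgraph of $\mathcal{T}$, and since $\mathcal{T}$ has infinite diameter, I can arrange these embeddings (together with arbitrary embeddings of any remaining components) into an injective homomorphism $\lambda : \mathcal{Y} \to \mathcal{T}$ for which $d_{\flipp}(\lambda(\mathcal{Y}_1), \lambda(\mathcal{Y}_2)) > d$. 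Any injective homomorphism $\phi : \flip \to \flipp$ is $1$-Lipschitz, so an extension of $\lambda$ would force $d_{\flipp}(\lambda(\mathcal{Y}_1), \lambda(\mathcal{Y}_2)) \leq d$, a contradiction. Thus $\lambda$ has no extension.

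If $\mathcal{Y}$ is connected, it is a finite subtree of $\flip$, and I would set $\lambda = \iota|_{\mathcal{Y}}$. The key step is to produce a nontrivial automorphism $\sigma$ of $\flip$ that fixes $\mathcal{Y}$ pointwise, for then $\iota$ and $\iota \circ \sigma$ are two distinct injective homomorphisms $\flip \to \flipp$ extending $\lambda$. To build $\sigma$, pick a vertex $v \in \mathcal{Y}$ with at most one neighbor in $\mathcal{Y}$ (a leaf of the subtree $\mathcal{Y}$, or the sole vertex of $\mathcal{Y}$ when $|\mathcal{Y}| = 1$). Then $v$ has at least two neighbors $w_1, w_2 \in \flip \setminus \mathcal{Y}$, and because $\mathcal{Y}$ is a connected subtree of the tree $\flip$, the branches of $\flip$ at $v$ through $w_1$ and through $w_2$ meet $\mathcal{Y}$ only at $v$ itself. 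Each of these branches is a rooted infinite binary tree; any isomorphism between them interchanging $w_1$ and $w_2$, extended by the identity on the rest of $\flip$, gives the desired $\sigma$.

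The step I expect to require the most care is the construction of $\sigma$: one must verify that the two chosen branches are genuinely disjoint from $\mathcal{Y}$ (which uses that $\mathcal{Y}$, being a connected subgraph of a tree, is itself a subtree, so the unique path in $\flip$ from any $x \in \mathcal{Y}$ to $v$ lies in $\mathcal{Y}$) and that the branch swap assembles into a well-defined automorphism of $\flip$. Once this is done, the two cases together show that no finite subgraph of $\flip$ is a finite rigid set for $(S, S')$.
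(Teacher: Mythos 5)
Your proposal is correct and fills in the details behind the paper's very terse proof, which simply asserts that there are infinitely many injective homomorphisms of $\flip$ onto the trivalent tree even with the images of any finite vertex set held fixed; your branch-swap automorphism $\sigma$ is exactly the construction that assertion rests on. One small remark: the case split on connectedness of $\mathcal{Y}$ is unnecessary. Even when $\mathcal{Y}$ is disconnected you can still take $\lambda = \iota|_{\mathcal{Y}}$ and run the branch-swap argument, e.g.\ by performing the swap at a vertex $v$ lying outside the convex hull of $\mathcal{Y}$ in the tree $\flip$ (then all of $\mathcal{Y}$ sits in a single branch at $v$, and swapping the other two branches fixes $\mathcal{Y}$ pointwise). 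Your disconnected argument (a $\lambda$ spreading components too far apart to extend) is also a valid obstruction, just not needed. Finally, a tiny wording issue: the branches at $v$ through $w_1$ and $w_2$ do not contain $v$, so they are disjoint from $\mathcal{Y}$ outright rather than ``meeting $\mathcal{Y}$ only at $v$''; the argument you give for why they avoid $\mathcal{Y}$ is the right one.
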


\begin{proof}
There are infinitely many distinct injective homomorphisms of $\flip$ onto the trivalent tree in $\flipp$, even with the images of any finite set of vertices of $\flip$ fixed. We require injective homomorphisms of our finite rigid sets to have \textit{unique} extension to all of $\flip$, so there are no finite rigid sets in this case. 
\end{proof}

Since $\mathcal{F}(S_{1,1})$ is a trivalent tree, we get the following corollary.

\begin{corollary}
If $S \cong S' \cong S_{1,1}$, then there are no finite rigid sets for the pair $(S,S')$.
\end{corollary}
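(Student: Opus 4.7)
The plan is to observe that this is an immediate specialization of Lemma \ref{lemma: remaining case containing trivalent tree means no finite rigid sets}. Recall that earlier in the paper, when describing flip graphs of simple surfaces, it is stated that $\mathcal{F}(S_{1,1})$ is a trivalent tree. Thus if $S' \cong S_{1,1}$, then $\flipp$ \emph{is} a trivalent tree, so in particular it contains a trivalent tree (namely, itself).

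With $S \cong S_{1,1}$ and $\flipp$ containing a trivalent tree, the hypotheses of Lemma \ref{lemma: remaining case containing trivalent tree means no finite rigid sets} are satisfied, and the conclusion — that no finite rigid set for the pair $(S,S')$ exists — follows immediately. There is essentially no additional content to supply beyond citing the lemma and the explicit description of $\mathcal{F}(S_{1,1})$ as trivalent, so there is no significant obstacle. The only minor bookkeeping is to note that ``contains a trivalent tree'' is satisfied trivially when the ambient graph is itself a trivalent tree.
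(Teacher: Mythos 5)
Your proof is correct and matches the paper's approach exactly: the corollary follows immediately from Lemma \ref{lemma: remaining case containing trivalent tree means no finite rigid sets} once one recalls that $\mathcal{F}(S_{1,1})$ is itself a trivalent tree. Nothing further needs to be said.
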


Unfortunately, we do not know at this time if any other flip graphs contain a trivalent tree. Further, growth rate at least that of $2^n$ is not a sufficient condition to imply the existence of a trivalent tree, so we cannot at this time reduce the problem to a question solely about growth rate.  

Finally, we leave the reader with an example of a flip graph we have found which does not contain a trivalent tree despite having exponential growth. In fact, we show the even stronger result that the flip graph does not contain a 62-ball of any vertex in the trivalent tree.

\begin{prop}\label{prop: remaining case specific example}
Let $S \cong S_{1,1}$ and $S' \cong S_{1,0,(1)}$. For any vertex $U$ in $\flip$, there does not exist an injective homomorphism $B_{\flip}(U;62) \rightarrow \flipp$. 
\end{prop}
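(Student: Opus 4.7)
The idea is to extend the cardinality argument of Lemma \ref{lemma: remaining case works if s' balls grow slowly}: since $\flip$ is a trivalent tree, $|B_{\flip}(U;62)| = 3 \cdot 2^{62} - 2$, and the $1$-Lipschitz property of graph homomorphisms means that any injective homomorphism $\lambda : B_{\flip}(U;62) \to \flipp$ must have its image inside $B_{\flipp}(\lambda(U);62)$. So it suffices to show that for every vertex $V$ of $\flipp$,
\[
|B_{\flipp}(V;62)| < 3 \cdot 2^{62} - 2.
\]

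Since $d(S_{1,0,(1)}) = 4$, the naive maximum-degree bound $|B_{\flipp}(V;62)| \leq 2\cdot 3^{62} - 1$ is far too weak; to close the gap we must exploit the cycle structure of $\flipp$ provided by Lemma \ref{lemma: classification of paths of length 2}. Triangulations of $S_{1,0,(1)}$ are very small (only $4$ arcs and $3$ triangles), so the pairs of arcs at any vertex of $\flipp$ are highly constrained: by an incidence count, the six pairs of arcs in a triangulation carry a total of nine ``shared triangle'' incidences, forcing most pairs to share at least one triangle. Consequently, a large fraction of length-$2$ paths in $\flipp$ close up into $4$- or $5$-cycles, and BFS from $V$ is forced to revisit vertices, cutting the effective branching rate below $3$.

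The plan is to make this quantitative as follows. Since $\text{Mod}^\pm(S_{1,0,(1)})$ acts on $\flipp$ with finitely many orbits of vertices, it also acts with finitely many orbits on the set of local BFS configurations (a triangulation together with a distinguished incoming flipped arc). For each such configuration, Lemma \ref{lemma: classification of paths of length 2} pins down exactly how many of the remaining flippable arcs produce a genuinely new BFS neighbor versus one absorbed into a $4$-cycle or $5$-cycle, and records the orbit type of each outgoing configuration. This bookkeeping assembles into a finite transfer matrix whose $n$-th power controls $|B_{\flipp}(V;n)|$. The aim is to show that its spectral radius $\alpha$ satisfies $\alpha < 2$, so that $|B_{\flipp}(V;n)| = O(\alpha^n)$ and the desired inequality at $n = 62$ follows by direct numerical verification.

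The main obstacle will be controlling the growth rate tightly enough. The naive bound overshoots the target by a factor of $(3/2)^{62}$, so even a single BFS step where the cycle identifications from Lemma \ref{lemma: classification of paths of length 2} are underused can destroy the estimate; one must track carefully not only which flips close $4$- and $5$-cycles, but also how those cycles identify vertices across adjacent BFS layers so that no vertex is counted twice. The crux is therefore a careful combinatorial census of the small triangulations of $S_{1,0,(1)}$ and the flips between them, ensuring that the transfer matrix has spectral radius strictly below $2$ rather than merely below $3$; the low complexity of the surface makes such an enumeration feasible, and is the most promising route to the explicit constant $62$.
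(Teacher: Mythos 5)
Your proposal and the paper take genuinely different routes, and I think yours has a serious gap that is not merely a matter of unfinished bookkeeping. You want to show that for every vertex $V$ of $\flipp$ one has $|B_{\flipp}(V;62)| < 3\cdot 2^{62}-2$, which would make the statement follow from the purely cardinality-based Lemma~\ref{lemma: remaining case works if s' balls grow slowly}. But the paper explicitly remarks, right before Proposition~\ref{prop: remaining case specific example}, that ``among flip graphs with exponential growth rate, it is not known when this growth rate is less than that of $2^n$,'' and that the known uniform bound of Rafi--Tao exceeds $2^n$; that is, the hypothesis of Lemma~\ref{lemma: remaining case works if s' balls grow slowly} is precisely what is \emph{not} available for $S'\cong S_{1,0,(1)}$. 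Your plan therefore hinges on establishing that a certain transfer matrix has spectral radius $\alpha<2$, which you never verify and which is exactly the open point the paper is routing around. Indeed Proposition~\ref{prop: remaining case specific example} is stated as \emph{nonexistence of an injective homomorphism} of the $62$-ball, not as a bound $|B_{\flipp}(V;62)| < 3\cdot 2^{62}-2$; the former is weaker and this distinction is where the paper gains the needed slack.

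The paper's actual argument does not estimate the intrinsic ball growth of $\flipp$ at all. It uses the filling map $f : S' \to S$ (capping the boundary of $S_{1,0,(1)}$) to get a simplicial projection $\pi : \flipp \to \flip$ onto the trivalent tree, whose fibers are isometrically embedded copies of $\mathbb{R}$. Edges of $\flipp$ are sorted into ``type 0'' (collapsed by $\pi$) and ``type 1'' (mapped to edges), and each vertex of $\flipp$ meets exactly two of each. Given a hypothetical injective homomorphism $\phi : B_{\flip}(U;n) \to \flipp$, the key quantity is the average displacement $A(n,\phi)$ of the sphere $\mathrm{Sph}_{\flip}(U;n)$ under $\pi\circ\phi$. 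Because a geodesic in the trivalent tree must branch, and only two of the four outgoing edges at any vertex of $\flipp$ are type 1, the number of type 1 edges available at BFS-depth $k$ is at most $2^k$; summing gives $A(n,\phi)\le \tfrac{2n}{3}$. Then a Markov-type estimate shows at least a fifth of the sphere lands in a ball of radius $\tfrac{5}{6}n$ in the trivalent tree, while injectivity of $\phi$ together with the fiber being a line limits each fiber to at most $2n+1$ hits. Comparing $\tfrac{1}{5}\cdot 3\cdot 2^{n-1}$ with $(2n+1)(3\cdot 2^{5n/6}-2)$ produces the bound $n\le 61$. Notice the savings come from the factor $2^{5n/6}$ vs.\ $2^n$, not from a sub-$2$ base for ball growth in $\flipp$.

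Two smaller points worth flagging. First, your incidence count is off: a triangulation of $S_{1,0,(1)}$ has $4$ arcs and $3$ triangles, but one of those triangles has a boundary side, so the $\binom{4}{2}=6$ pairs of arcs carry $3+3+1=7$ (not $9$) shared-triangle incidences. Second, even if $\alpha<2$ were true, a spectral-radius estimate alone would give $|B_{\flipp}(V;n)| = O(\alpha^n)$ only with an unspecified multiplicative constant, so ``direct numerical verification at $n=62$'' would also require controlling that constant; the paper's argument sidesteps this because its inequality is completely explicit.

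Unless you can independently establish a growth base strictly below $2$ for $\mathcal{F}(S_{1,0,(1)})$ (which would be a new result the paper disavows knowing), you should abandon the ball-cardinality strategy here and instead adopt the projection argument, which is specifically designed to work in exactly the regime where the cardinality comparison is unavailable.
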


Then our original question is answered in this case:

\begin{corollary}
Let $S \cong S_{1,1}$ and $S' \cong S_{1,0,(1)}$. The set $B_{\flip}(U;62)$ is a finite rigid set for the pair $(S,S')$. 
\end{corollary}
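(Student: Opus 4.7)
The strategy is to apply Lemma~\ref{lemma: remaining case works if s' balls grow slowly} with the value $n=62$: it suffices to show that for every vertex $V \in \flipp$, the closed ball $B_{\flipp}(V;62)$ contains fewer than $3 \cdot 2^{62} - 2$ vertices, since $3 \cdot 2^{62} - 2$ is the number of vertices in any $62$-ball of the trivalent tree $\flip = \mathcal{F}(S_{1,1})$. Because the image of an injective homomorphism is contained in a $62$-ball around the image of $U$ (any graph homomorphism is $1$-Lipschitz on the vertex metric), such a bound on $|B_{\flipp}(V;62)|$ rules out any injection $B_\flip(U;62) \to \flipp$ and yields the proposition immediately.

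I would begin by describing $\flipp$ combinatorially. The surface $S' = S_{1,0,(1)}$ has Euler characteristic $-1$ and a single marked point (on its boundary loop), so $d(S') = 4$ and an Euler-characteristic count shows every triangulation of $S'$ consists of exactly $4$ arcs cutting $S'$ into $3$ triangles. As there are only finitely many $\text{Aut}(\flipp)$-orbits of vertices, I would enumerate these orbit types, and for each type record which arcs are flippable (unflippable arcs being precisely the inner sides of folded triangles) together with the orbit type reached by each flip.

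Next, I would exploit Lemma~\ref{lemma: classification of paths of length 2}: since every triangulation of $S'$ contains only $3$ triangles, many pairs of arcs are forced to border two common triangles and many others border one. This produces an abundance of $4$- and $5$-cycles in $\flipp$, so the effective branching of balls is substantially less than the naive valence bound of $4$. Packaging the orbit-level flip data into a finite-state transition system yields a recurrence bounding $|B_{\flipp}(V;n)|$ above by a sequence growing much more slowly than the naive $\sim 3^n$ estimate. I would then solve or computationally verify this recurrence and check that its value at $n = 62$ is strictly less than $3 \cdot 2^{62} - 2$, after which Lemma~\ref{lemma: remaining case works if s' balls grow slowly} closes the argument.

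The main obstacle is obtaining a sharp enough bound. The Rafi--Tao universal upper bound referenced in the paper exceeds $2^n$, so no generic flip-graph estimate suffices; the argument must exploit the specific orbit-transition structure of $\mathcal{F}(S_{1,0,(1)})$. Unlike Proposition~\ref{prop: remaining case subexponential growth}, where sub-exponential growth of $\text{Mod}(S')$ handled everything at once, here the growth is exponential and one must carefully track how often each transition reuses a vertex via a short cycle. The fine bookkeeping required to push the bound below $3 \cdot 2^{62} - 2$ at the specific radius $62$, rather than merely proving some eventual threshold exists, is the delicate part of the proof.
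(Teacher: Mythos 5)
There is a genuine gap in this proposal. You plan to apply Lemma~\ref{lemma: remaining case works if s' balls grow slowly} by showing directly that $|B_{\flipp}(V;62)| < 3\cdot 2^{62}-2$ for all $V$ in $\flipp$, but the paper itself explicitly warns that this strategy is unavailable here. In Section~\ref{section: remaining case}, immediately after Proposition~\ref{prop: remaining case subexponential growth}, the paper observes that for surfaces $S'$ with $g\geq 1$ or $b+n\geq 4$ (which includes $S'\cong S_{1,0,(1)}$), the group $\text{Mod}(S')$ has exponential growth, and states: ``among flip graphs with exponential growth rate, it is not known when this growth rate is less than that of $2^n$, so Lemma~\ref{lemma: remaining case works if s' balls grow slowly} is not presently helpful either.'' So the key premise of your argument—that balls of radius $62$ in $\flipp$ are provably, or even computably, smaller than the corresponding ball in the trivalent tree—is precisely the claim the paper declines to make. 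Indeed, the naive bound one extracts from the projection structure is $|B_{\flipp}(V;n)| \leq (2n+1)(3\cdot 2^n-2)$, which exceeds $3\cdot 2^n -2$, and there is no visible reason to expect the true ball growth to drop below $2^n$ at $n=62$.

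The paper circumvents this by a finer, non-cardinality argument in Proposition~\ref{prop: remaining case specific example}. It uses the simplicial projection $\pi:\flipp\to\flip$ induced by capping the boundary of $S'$, whose fibers are isometrically embedded copies of $\mathbb{R}$. It then analyzes the composition $\pi\circ\phi$ of a hypothetical injective homomorphism $\phi:B_{\flip}(U;n)\to\flipp$ with this projection, classifying edges as ``type $0$'' (within a fiber) or ``type $1$'' (between fibers), and shows that the average distance of $\pi\circ\phi$-images of the sphere $Sph_{\flip}(U;n)$ from $\pi\circ\phi(U)$ is at most $\frac{2n}{3}$. A Markov-type estimate then forces at least a fifth of the sphere's image to concentrate, after projection, in a ball of radius $\frac{5}{6}n$ in $\flip$; since each fiber is a line, $\pi$ is at most $(2n+1)$-to-one on $\phi(B_{\flip}(U;n))$, and the resulting inequality $\frac{1}{5}\cdot 3\cdot 2^{n-1} \leq (2n+1)(3\cdot 2^{5n/6}-2)$ fails once $n\geq 62$. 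This argument does not need the ball of radius $62$ in $\flipp$ to be small; it only needs the \emph{injective image of a tree-ball} to be structurally incompatible with the fiber-plus-tree geometry of $\flipp$. That is a strictly weaker (and actually provable) requirement than the cardinality hypothesis of Lemma~\ref{lemma: remaining case works if s' balls grow slowly}, and it is the crucial idea your proposal is missing. The corollary itself then follows immediately from Proposition~\ref{prop: remaining case specific example}, since nonexistence of any injective homomorphism from $B_{\flip}(U;62)$ into $\flipp$ makes $B_{\flip}(U;62)$ a finite rigid set vacuously.
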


\begin{proof}[Proof of Proposition \ref{prop: remaining case specific example}.]
The flip graph of $S'\cong S_{1,0,(1)}$ is pictured in Figure \ref{figure: projection map for remaining case}. It is isomorphic to a subgraph of the lexicographical product $T_3 \cdot \mathbb{R}$, where $T_3$ is the trivalent tree. We can understand the structure of $\flipp$ more fully by viewing it in relation to $\flip$. Say that $p$ is the marked point on $S$. There is a map $f:S' \rightarrow S$ such that $f|_{\text{int}(S')}:\text{int}(S') \rightarrow S\backslash \{p\}$ is a homeomorphism and $f(\partial S') = p$. This map induces a projection $\pi: \flipp \rightarrow \flip$. The fiber $\pi^{-1}(W)$ of a vertex $W$ in $\flip$ is an isometrically embedded copy of $\mathbb{R}$. See Figure \ref{figure: projection map for remaining case}.

\begin{figure}[h]
    \labellist
        \pinlabel {$\flipp$} at 94 10
        \pinlabel {$\flip$} at 306 10
        \pinlabel {$l_0$} at 75 133
        \pinlabel {$l_1$} at 110 133
        \pinlabel {$l_2$} at 130 165
        \pinlabel {$l_3$} at 130 20
        \pinlabel {$l_{-1}$} at 59 20
        \pinlabel {$l_{-2}$} at 58 155
        \pinlabel {$\pi(l_0)$} at 269 89.5
        \pinlabel {$\pi(l_1)$} at 340 89.5
        \pinlabel {$\pi(l_2)$} at 357 140
        \pinlabel {$\pi(l_3)$} at 354 42
        \pinlabel {$\pi(l_{-2})$} at 257 138
        \pinlabel {$\pi(l_{-1})$} at 258 43
    \endlabellist
    \centering
    \includegraphics[scale=.98]{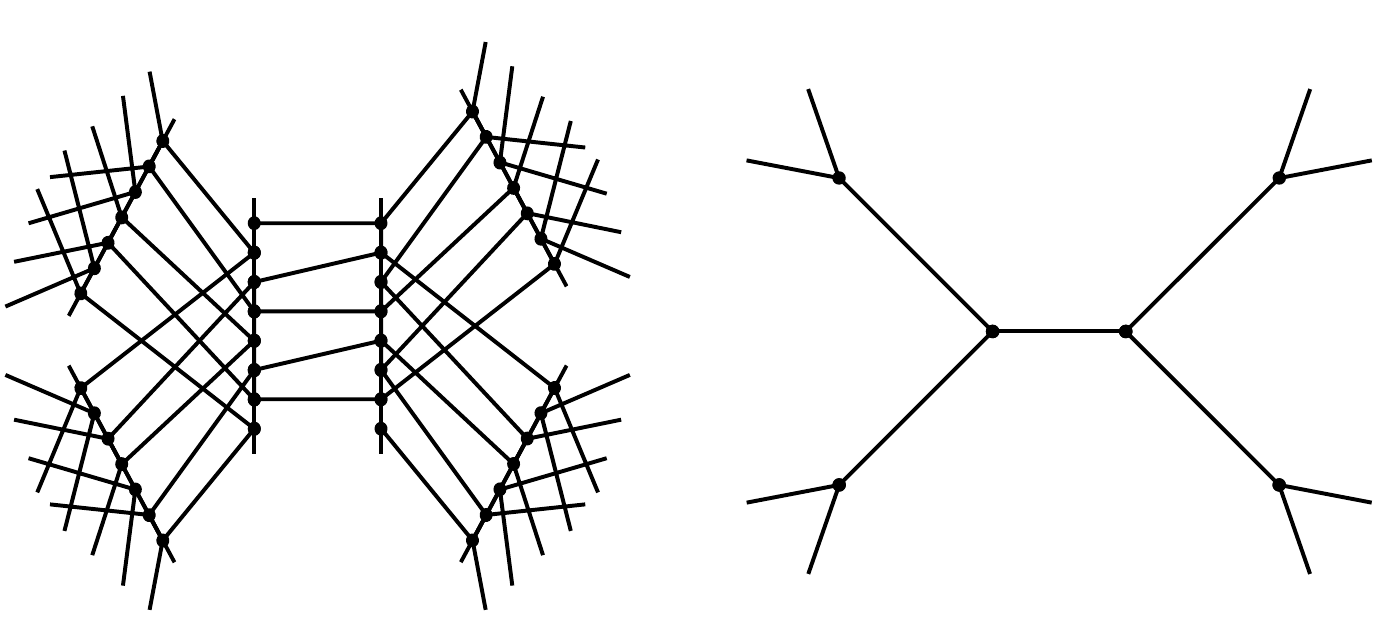}
    \caption{Flip graphs of $S' \cong S_{1,0,(1)}$ (left) and $S \cong S_{1,1}$ (right). Each $l_i$ in $\flipp$ corresponds to an isometrically embedded copy of  $\mathbb{R}$ which maps to a single vertex in $\flip$ under the projection map $\pi$. }
    \label{figure: projection map for remaining case}
\end{figure}

Now fix a vertex $U$ in $\flip$ and assume that there exists an injective homomorphism $\phi:B_{\flip}(U; n) \rightarrow \flipp$ for some $n\in \N$. We will consider the image of $Sph_{\flip}(U;n) = \{V\in \flip:d_{\flip}(U,V)=n\}$, the sphere of radius $n$ in $\flip$ centered at $U$, under the map $\pi \circ \phi$. We use counting arguments to show that $n\leq 61$. This implies that if $n=62$, there does not exist an injective homomorphism $\phi:B_{\flip}(U; n) \rightarrow \flipp$.

For $n \in \N$, define 
\[A(n, \phi)= \frac{1}{|Sph_{\flip}(U;n)|} \displaystyle \sum_{V\in Sph_{\flip}(U;n)} d_{\flip}((\pi\circ \phi)(U),(\pi\circ \phi)(V)),\]
which is the average distance between the images of an element in $Sph_{\flip}(U;n)$ and $U$ under $\pi\circ \phi$. Recall that $\flip$ is locally finite, so $Sph_{\flip}(U;n)$ contains finitely many elements. We find an upper bound for $A(n,\phi)$ for a fixed $n$, independent of the injective homomorphism $\phi$. 

Call an edge in $\flipp$ \textit{type 0} if its two endpoints project to the same point under $\pi$. Call it \textit{type 1} otherwise. If $V\in Sph_{\flip}(U;n)$, there is a unique path $\gamma$ in $\flip$ of length $n$ from $U$ to $V$. The number of type 1 edges in $\phi(\gamma)$ is equal to the length of $(\pi\circ \phi)(\gamma)$, which gives an upper bound for $d_{\flip}((\pi\circ \phi)(U),(\pi\circ \phi)(V))$. Each vertex in $\flipp$ is the endpoint of exactly two type 0 edges and two type 1 edges. Thus, for each $1\leq k \leq n$, if we assume the maximum number of type 1 edges are present in $\phi(B_{\flip}(U;k-1))$, then there can be at most $2^k$ type 1 edges in $\phi(B_{\flip}(U;k)\backslash B_{\flip}(U;k-1))$. Further, an edge in $B_{\flip}(U;k)\backslash B_{\flip}(U;k-1)$ is on the path of length $n$ from $U$ to $V$ for exactly $2^{n-k}$ elements $V\in Sph_{\flip}(U;n)$. Then we can bound $A(n,\phi)$ from above by re-writing the sum as a sum over type 1 edges at distance $k$ from $U$ for each $1\leq k \leq n$. Doing so yields
\[A(n, \phi) \leq \frac{1}{3\cdot 2^{n-1}} \displaystyle \sum_{k=1}^{n} 2^k \cdot 2^{n-k} = \frac{2n}{3} . \]

Then under $\pi\circ \phi$, at least $\frac{1}{5}$ of the elements in $Sph_{\flip}(U;n)$ map into $B_{\flip}\left((\pi\circ \phi)(U);\frac{5}{6} n)\right)$. Otherwise, we would have 
\[A(n,\phi)>\frac{\frac{5}{6} n \cdot \frac{4}{5}|Sph_{\flip}(U;n)|}{|Sph_{\flip}(U;n)|} = \frac{2}{3}\cdot n \geq A(n,\phi),\]
a contradiction. Further, since for any vertex $W$ in $\flip$, the pre-image $\pi^{-1}(W)$ is an isometrically embedded copy of $\mathbb{R}$ in $\flipp$, we see that $\pi$ sends at most $2n+1$ elements from $\phi(B_{\flip}(U;n))$ to the same point. Then 
\begin{align*}
\frac{1}{5} \cdot 3 \cdot 2^{n-1} 
    &=      \frac{1}{5}|Sph_{\flip}(U;n)|\\
    &\leq   (2n+1) \left|B_{\flip}\left((p\circ \phi)(U),\frac{5}{6}n \right)\right| \\ 
    &=  (2n+1)(3\cdot 2^{5n/6}-2) .
\end{align*}
However, this inequality holds only when $n\leq 61$, which gives us the desired result.
\end{proof}

\appendix 
\section{Images of Flip Graphs of Selected Surfaces} \label{appendix: images of flip graphs}

The images below were created by manually drawing triangulations of each surface $S$ and determining which triangulations are related by flips. Since the degree of a vertex in the flip graph is equal to the number of flippable arcs in the triangulation which that vertex represents, it was straightforward to check when all edges incident to a vertex had been drawn. In the figures below, every vertex pictured has (at least a portion of) each incident edge drawn. 

Further, we can guarantee that the portion of each flip graph $\flip$ drawn below is enough so that for any vertex $T$ in $\flip$, there is an automorphism $\phi: \flip \rightarrow \flip$ taking the pictured portion of $\flip$ to a neighborhood of $T$. This is due to the fact that there are finitely many homeomorphism classes of triangulations of a surface $S$ and that these homeomorphisms induce automorphisms $\flip \rightarrow \flip$. For each surface whose flip graph is pictured below, the number of homeomorphism classes of triangulations of the surface is quite small. Indeed, for $S_{1,0,(1)}$ there is only one class. Of the surfaces whose flip graphs are pictured below, the greatest number of homeomorphism classes is six for $S_{0,4}$. Thus, we were able to guarantee that the portion of each flip graph pictured below contains a neighborhood of a vertex which represents a triangulation from each homeomorphism class of triangulations of the surface. We say a bit more below about the larger structure of each flip graph pictured.



In the following four images (the flip graphs of $S_{0,0,(1,2)}$, $S_{0,2,(1)}$, $S_{0,2,(2)}$, and $S_{0,0,(2,2)}$), the flip graph is quasi-isometric to a line and translations of the portion pictured below will generate all of $\flip$ in each case. 

\begin{figure}[H]
    \labellist
        \pinlabel {$\cdots$} at -8 46
        \pinlabel {$\cdots$} at 192 46
        \pinlabel {$U$} at 99.5 46
    \endlabellist
    \centering
    \includegraphics{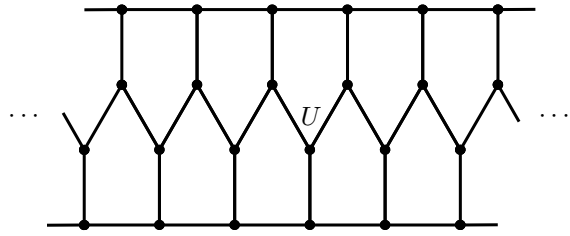}
    \caption{Flip graph of $S_{0,0,(1,2)}$}
\end{figure}

\begin{figure}[H]
    \labellist
        \pinlabel {$\cdots$} at -4 55.5
        \pinlabel {$\cdots$} at 175 55.5
        \pinlabel {$U$} at 77 62
    \endlabellist
    \centering
    \includegraphics{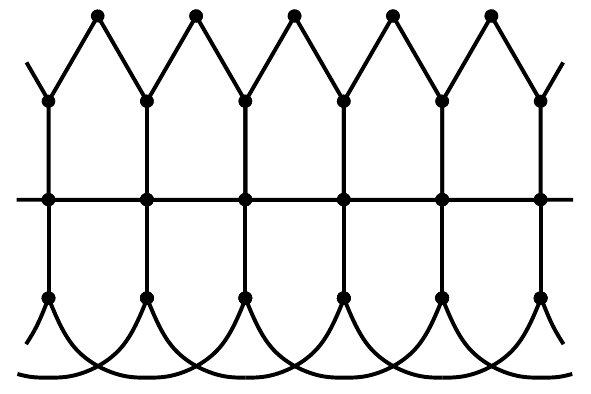}
    \caption{Flip graph of $S_{0,2,(1)}$}
\end{figure}

\begin{figure}[H]
    \labellist
        \pinlabel {$\cdots$} at -5 114
        \pinlabel {$\cdots$} at 374 114
        \pinlabel {$U$} at 154 114
    \endlabellist
    \centering
    \includegraphics{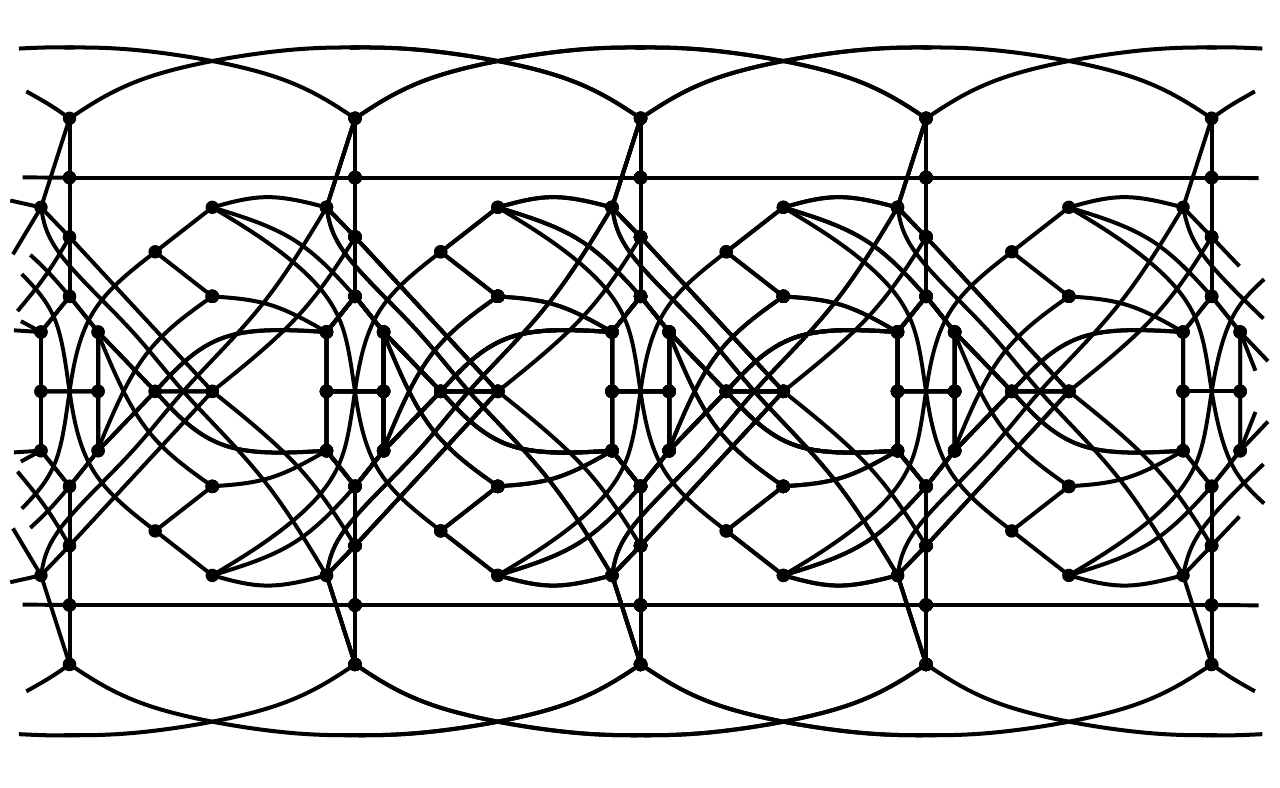}
    \caption{Flip graph of $S_{0,2,(2)}$}
\end{figure}

\begin{figure}[H]
    \labellist
        \pinlabel {$\cdots$} at 0 84
        \pinlabel {$\cdots$} at 270 84
        \pinlabel {$U$} at 137 85
    \endlabellist
    \centering
    \includegraphics{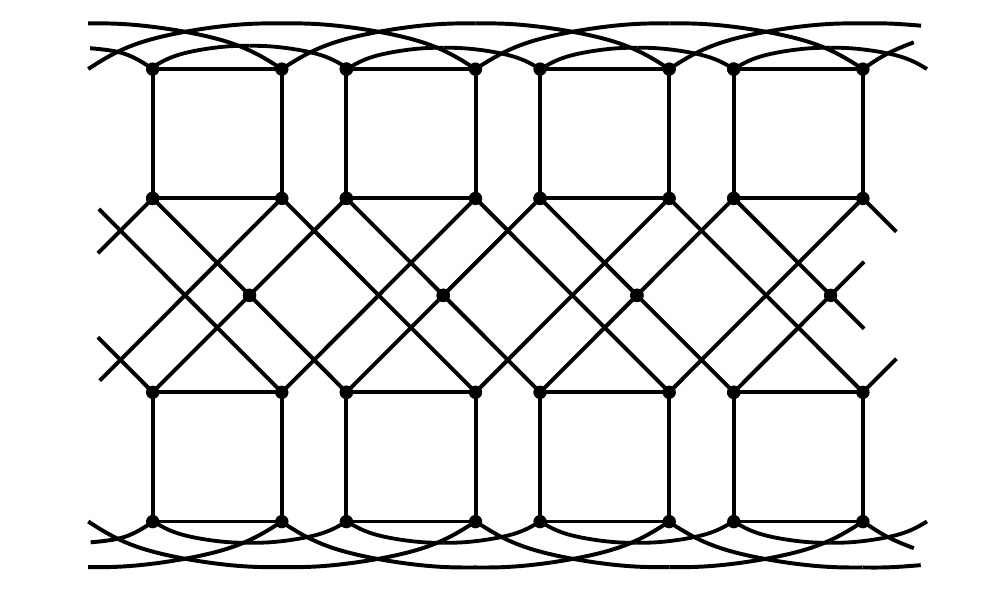}
    \caption{Flip graph of $S_{0,0,(2,2)}$}
\end{figure}

For $S=S_{1,0,(1)}$, the flip graph $\flip$ is isomorphic to a subgraph of the lexicographical product $T_3 \cdot \mathbb{R}$, where $T_3$ is the trivalent tree. There is a simplicial projection $\mathcal{F}(S_{1,0,(1)}) \rightarrow T_3$ which is induced by ``capping'' the boundary component of $S_{1,0,(1)}$. Each fiber of this projection is isomorphic to $\mathbb{R}$. See Section \ref{section: remaining case} for more details.

\begin{figure}[H]
    \labellist
        \pinlabel {$U$} at 143 137
    \endlabellist
    \centering
    \includegraphics{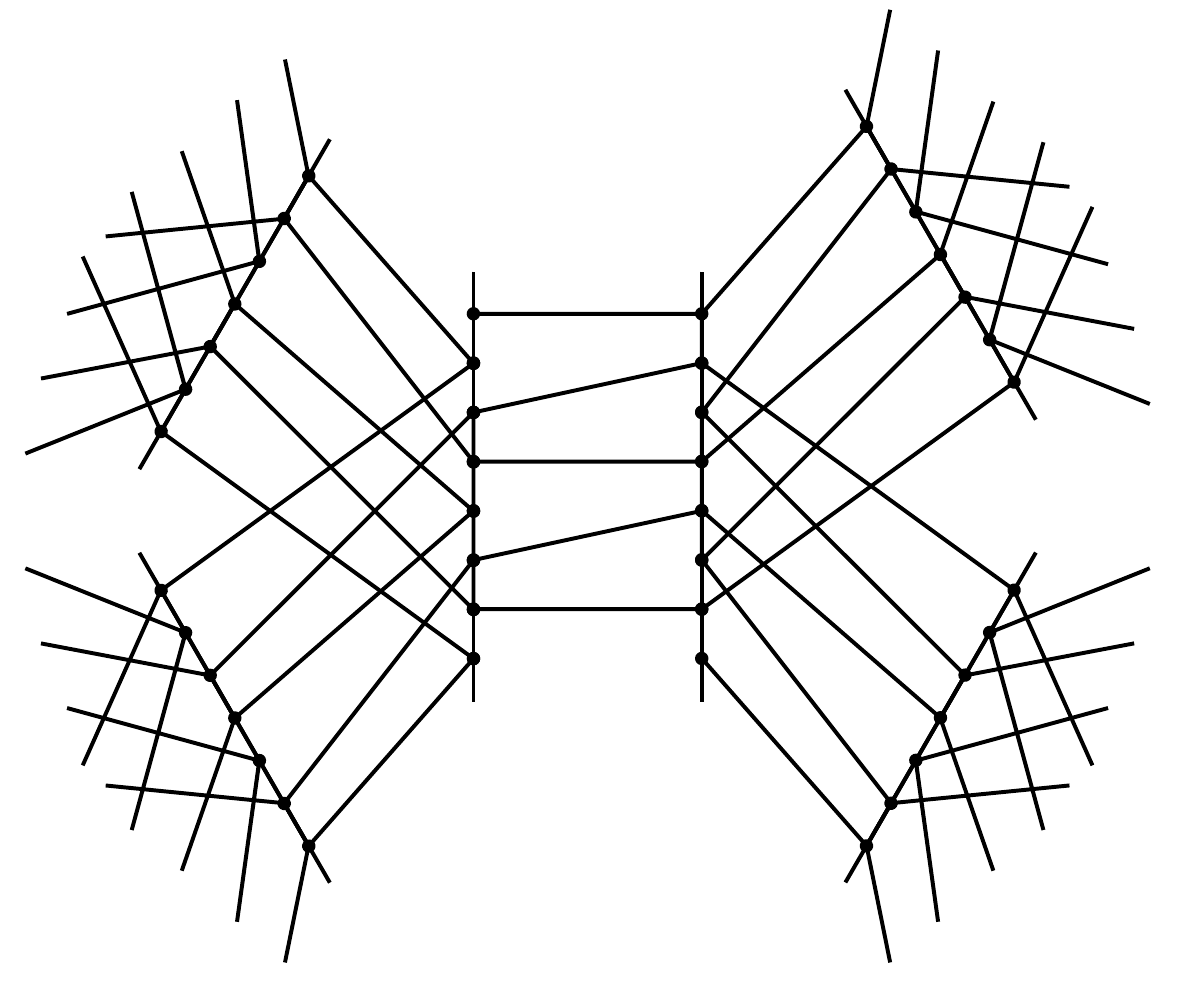}
    \caption{Flip graph of $S_{1,0,(1)}$}
    \label{figure: flip graph simple case 5}
\end{figure}

The flip graph of $S=S_{0,4}$ below should be viewed as a ``thickened trivalent tree.'' The entire flip graph can be generated from this portion by products of a rotation and a reflection of $\flip$. More specifically, if we view $S$ as a regular tetrahedron with a marked point at each vertex, a rotation of order three fixing one of the faces is a homeomorphism of $S$. This homeomorphism induces a rotation of order three of $\flip$ about the point marked $U$. Alternatively, we can view $S$ as a sphere with each marked point on the equator. Then an inversion in the equator is also a homeomorphism of $S$. This homeomorphism induces an automorphism which reflects $\flip$ across the line $l$. The union of the images of the portion of $\flip$ pictured below under products of these two automorphisms yields all of $\flip$.

\begin{figure}[H]
    \labellist
        \tiny
        \pinlabel {$U$} at 397.5 436
        \normalsize
        \pinlabel {$l$} at 425 630
    \endlabellist
    \centering
    \includegraphics[scale=.49]{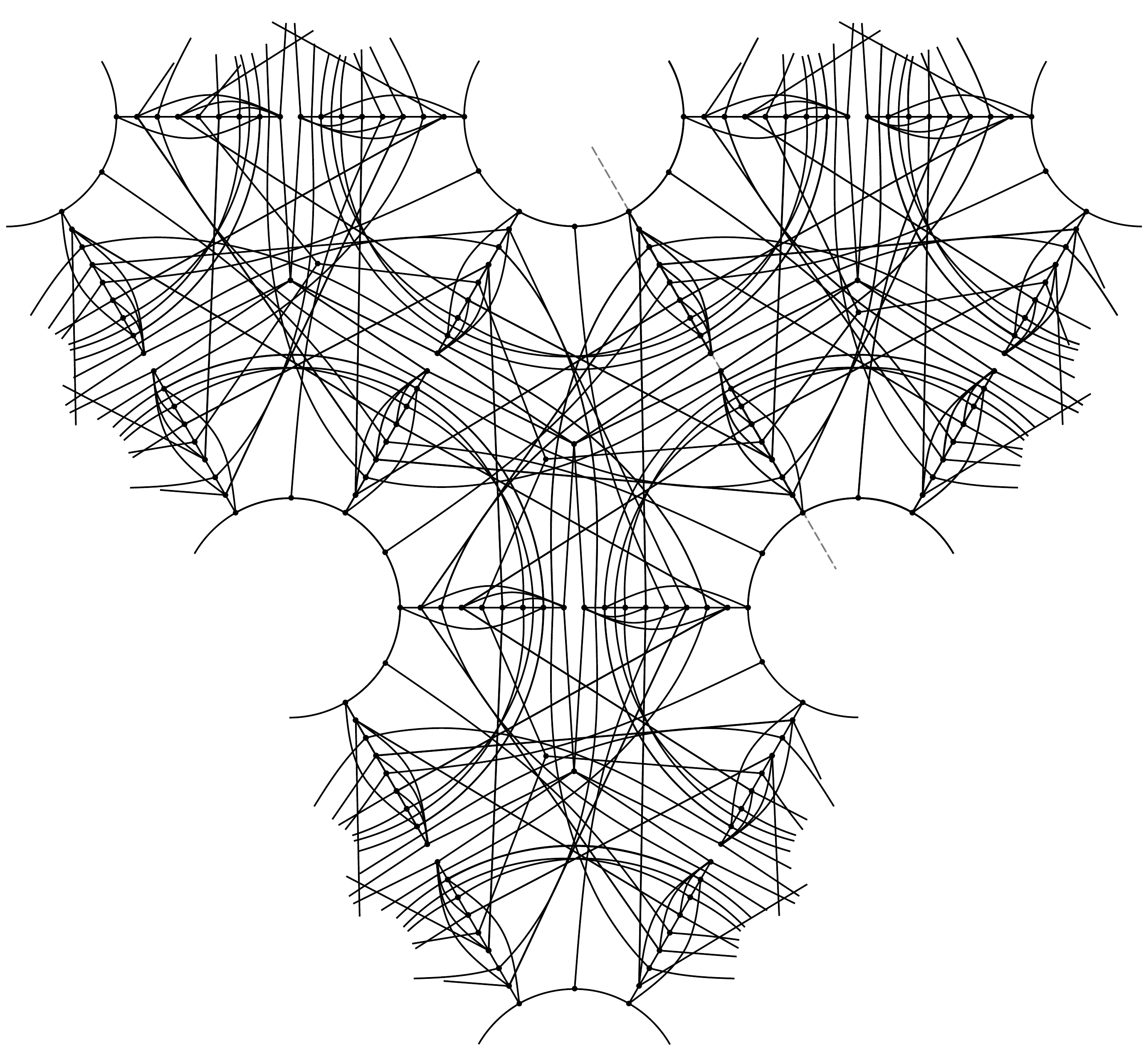}
    \caption{Flip graph of $S_{0,4}$}
\end{figure}

\printbibliography

@article{aramayona2013finite,
  title={Finite rigid sets in curve complexes},
  author={Aramayona, Javier and Leininger, Christopher J},
  journal={Journal of Topology and Analysis},
  volume={5},
  number={02},
  pages={183--203},
  year={2013},
  publisher={World Scientific}
}

@article{irmak2010injective,
  title={Injective simplicial maps of the arc complex},
  author={Irmak, Elmas and McCarthy, John D},
  journal={Turkish Journal of Mathematics},
  volume={34},
  number={3},
  pages={339--354},
  year={2010},
  publisher={The Scientific and Technological Research Council of Turkey}
}

@article{mosher1988tiling,
  title={Tiling the projective foliation space of a punctured surface},
  author={Mosher, Lee},
  journal={Transactions of the American Mathematical Society},
  pages={1--70},
  year={1988},
  publisher={JSTOR}
}

@article{ivanov1997automorphisms,
  title={Automorphisms of complexes of curves and of Teichmuller spaces},
  author={Ivanov, Nikolai V},
  journal={International Mathematics Research Notices},
  volume={1997},
  number={14},
  year={1997}, 
  pages={651--666},
  publisher={Oxford University Press}
}

@article{korkmaz1999automorphisms,
  title={Automorphisms of complexes of curves on punctured spheres and on punctured tori},
  author={Korkmaz, Mustafa},
  journal={Topology and its Applications},
  volume={95},
  number={2},
  pages={85--111},
  year={1999},
  publisher={Elsevier}
}

@article{maungchang2018finite,
  title={Finite rigid subgraphs of the pants graphs of punctured spheres},
  author={Maungchang, Rasimate},
  journal={Topology and its Applications},
  volume={237},
  pages={37--52},
  year={2018},
  publisher={Elsevier}
}

@article{margalit2004automorphisms,
  title={Automorphisms of the pants complex},
  author={Margalit, Dan},
  journal={Duke Mathematical Journal},
  volume={121},
  number={3},
  pages={457--479},
  year={2004},
  publisher={Duke University Press}
}

@article{luo1999automorphisms,
  title={Automorphisms of the complex of curves},
  author={Luo, Feng},
  journal={Topology},
  volume={39},
  number={2},
  pages={283--298},
  year={2000},
  publisher={Elsevier}
}

@article{disarlo2015combinatorial,
  title={Combinatorial rigidity of arc complexes},
  author={Disarlo, Valentina},
  journal={arXiv preprint arXiv: 1505.08080},
  year={2015}
}

@article{mccarthy2012simplicial,
  title={Simplicial actions of mapping class groups},
  author={McCarthy, John D and Papadopoulos, Athanase},
  journal={Handbook of Teichm{\"u}ller theory},
  volume={3},
  pages={297--423},
  year={2012}
}

@article{maungchang2019finite,
  title={Finite rigid subgraphs of pants graphs},
  author={Hern{\'a}ndez, Jes{\'u}s Hern{\'a}ndez and Leininger, Christopher J and Maungchang, Rasimate},
  journal={Geometriae Dedicata},
  pages={1--19},
  year={2020},
  publisher={Springer}
}

@inproceedings{korkmaz2010arc,
  title={On the arc and curve complex of a surface},
  author={Korkmaz, Mustafa and Papadopoulos, Athanase},
  booktitle={Mathematical Proceedings of the Cambridge Philosophical Society},
  volume={148},
  number={3},
  pages={473--483},
  year={2010},
  organization={Cambridge University Press}
}

@inproceedings{korkmaz2012ideal,
  title={On the ideal triangulation graph of a punctured surface},
  author={Korkmaz, Mustafa and Papadopoulos, Athanase},
  booktitle={Annales de l'Institut Fourier},
  volume={62},
  number={4},
  pages={1367--1382},
  year={2012}
}

@article{schaller2000mapping,
  title={Mapping class groups of hyperbolic surfaces and automorphism groups of graphs},
  author={Schaller, Paul Schmutz},
  journal={Compositio Mathematica},
  volume={122},
  number={3},
  pages={243--260},
  year={2000},
  publisher={London Mathematical Society}
}

@article{irmak2004complexes,
  title={Complexes of nonseparating curves and mapping class groups},
  author={Irmak, Elmas},
  journal={The Michigan Mathematics Journal},
  volume={54},
  number={1},
  pages={81--110},
  year={2006}
}

@article{irmak2007automorphisms,
  title={Automorphisms of the Hatcher-Thurston complex},
  author={Irmak, Elmas and Korkmaz, Mustafa},
  journal={Israel Journal of Mathematics},
  volume={162},
  number={1},
  pages={183--196},
  year={2007},
  publisher={Springer}
}

@article{ilbira2018finite,
  title={Finite rigid sets in curve complexes of nonorientable surfaces},
  author={Ilbira, Sabahattin and Korkmaz, Mustafa},
  journal={Geom Dedicata},
  year={2019},
  doi={10.1007/s10711-019-00478-6}
}

@article{irmak2017superinjective,
  title={Superinjective simplicial maps of the two-sided curve complexes on nonorientable surfaces},
  author={Irmak, Elmas and Paris, Luis},
  journal={Fundamenta Mathematicae},
  year={2020},
  number={3},
  pages={211-260},
  volume={249}
}

@article{irmak2012superinjective,
  title={Superinjective simplicial maps of the complexes of curves on nonorientable surfaces},
  author={Irmak, Elmas},
  journal={Turkish Journal of Mathematics},
  volume={36},
  number={3},
  pages={407--421},
  year={2012},
  publisher={The Scientific and Technological Research Council of Turkey}
}

@article{irmak2008injective,
  title={Injective simplicial maps of the complex of arcs on nonorientable surfaces},
  author={Irmak, Elmas},
  journal={Algebraic \& Geometric Topology},
  year={2009},
  volume={9},
  number={4},
  pages={2055--2077},
  publisher={Mathematical Sciences Publishers}
}

@article{aramayona2010simplicial,
  title={Simplicial embeddings between pants graphs},
  author={Aramayona, Javier},
  journal={Geometriae Dedicata},
  volume={144},
  number={1},
  pages={115--128},
  year={2010},
  publisher={Springer}
}

@inproceedings{bell2019cubical,
  title={Cubical geometry in the polygonalisation complex},
  author={Bell, Mark C and Disarlo, Valentina and Tang, Robert},
  booktitle={Mathematical Proceedings of the Cambridge Philosophical Society},
  volume={167},
  number={1},
  pages={1--22},
  year={2019},
  organization={Cambridge University Press}
}

@inproceedings{ivanov2006fifteen,
  title={Fifteen problems about the mapping class groups},
  author={Ivanov, Nikolai V},
  booktitle={Proceedings of Symposia in Pure Mathematics},
  year={2006},
  publisher={American Mathematical Society},
  pages={71--80},
  volume={74}
}

@article{brendle2019normal,
  title={Normal subgroups of mapping class groups and the metaconjecture of Ivanov},
  author={Brendle, Tara and Margalit, Dan},
  journal={Journal of the American Mathematical Society},
  volume={32},
  number={4},
  pages={1009--1070},
  year={2019}
}

@article{irmak2014simplicial,
  title={On simplicial maps of the complexes of curves of nonorientable surfaces},
  author={Irmak, Elmas},
  journal={Algebraic \& Geometric Topology},
  volume={14},
  number={2},
  pages={1153--1180},
  year={2014},
  publisher={Mathematical Sciences Publishers}
}

@article{irmak2019exhausting,
  title={Exhausting Curve Complexes by Finite Superrigid Sets on Nonorientable Surfaces},
  author={Irmak, Elmas},
  journal={arXiv preprint arXiv:1903.04926},
  year={2019}
}

@article{maungchang2017exhausting,
  title={Exhausting pants graphs of punctured spheres by finite rigid sets},
  author={Maungchang, Rasimate},
  journal={Journal of Knot Theory and Its Ramifications},
  volume={26},
  number={14},
  pages={1750105},
  year={2017},
  publisher={World Scientific}
}

@article{mcleay2018geometric,
  title={Geometric normal subgroups in mapping class groups of punctured surfaces},
  author={McLeay, Alan},
  journal={New York Journal of Mathematics},
  volume={25},
  pages={839-888},
  year={2019}
}

@article{irmak2019exhausting2,
    title={Exhausting Curve Complexes by Finite Rigid Sets on Nonorientable Surfaces},
    author={Irmak, Elmas},
    journal={arXiv preprint arXiv:1906.09913},
    year={2019}
}

@article{shackleton2007combinatorial,
    title={Combinatorial rigidity in curve complexes and mapping class groups},
    journal={Pacific Journal of Mathematics},
    volume={230},
    number={1},
    year={2007},
    pages={217--232},
    author={Kenneth J Shackleton}
}

@article{hernandez2018edge,
  title={Edge-preserving maps of curve graphs},
  author={Hern{\'a}ndez, Jes{\'u}s Hern{\'a}ndez},
  journal={Topology and its Applications},
  volume={246},
  pages={83--105},
  year={2018},
  publisher={Elsevier}
}

@inproceedings{aramayona2015injective,
  title={Injective maps between flip graphs},
  author={Aramayona, Javier and Koberda, Thomas and Parlier, Hugo},
  booktitle={Annales de l'Institut Fourier},
  volume={65},
  number={5},
  pages={2037--2055},
  year={2015}
}

@article{atalan2014automorphisms,
  title={Automorphisms of curve complexes on nonorientable surfaces},
  author={Atalan, Ferihe and Korkmaz, Mustafa},
  journal={Groups, Geometry, and Dynamics},
  volume={8},
  number={1},
  pages={39--68},
  year={2014}
}

@article{shinkle2020finite,
  title={Finite rigid sets in arc complexes},
  author={Shinkle, Emily},
  journal={Algebraic \& Geometric Topology},
  volume={20},
  number={6},
  pages={3127--3145},
  year={2020},
  publisher={Mathematical Sciences Publishers}
}

@article{disarlo2019geometry,
  title={The geometry of flip graphs and mapping class groups},
  author={Disarlo, Valentina and Parlier, Hugo},
  journal={Transactions of the American Mathematical Society},
  volume={372},
  number={6},
  pages={3809--3844},
  year={2019}
}

@article{harer1985stability,
  title={Stability of the homology of the mapping class groups of orientable surfaces},
  author={Harer, John L},
  journal={Annals of Mathematics},
  volume={121},
  number={2},
  pages={215--249},
  year={1985},
  publisher={JSTOR}
}

@article{rafi2016uniform,
  title={Uniform growth rate},
  author={Rafi, Kasra and Tao, Jing},
  journal={Proceedings of the American Mathematical Society},
  volume={144},
  number={4},
  pages={1415--1427},
  year={2016}
}

@article{hamenstadt2005word,
  title={Word hyperbolic extensions of surface groups},
  author={Hamenst{\"a}dt, Ursula},
  journal={arXiv preprint math/0505244},
  year={2005}
}

\end{document}